\newtheorem{theorem}{Theorem}[section]
\newtheorem{lemma}[theorem]{Lemma}
\newtheorem{prop}[theorem]{Proposition}
\newtheorem*{thmint}{Main Theorem}
\newtheorem{definition}[theorem]{Definition}
\theoremstyle{remark}
\newtheorem{remark}[theorem]{Remark}
\numberwithin{equation}{section}
\newcommand{\C}{\mathbb{C}}
\DeclareMathOperator{\image}{im}
\DeclareMathOperator{\codim}{codim}
\DeclareMathOperator{\id}{Id}
\DeclareMathOperator{\Id}{Id}
\DeclareMathOperator{\Homeo}{Homeo}
\title{Positively curved $10$-manifolds with $T^3$-symmetry}
\author{Anusha M. Krishnan}
\address{
Mathematisches Institut\\ Universit\"at M\"unster\\Einsteinstrasse 62\\D-48149 M\"unster\\Germany
}
\curraddr{
Department of Mathematics\\
Indian Institute of Technology Bombay\\
Mumbai -- 400076\\
India
}
\email{anushamk@math.iitb.ac.in}
\author{Michael Wiemeler}
\address{
Mathematisches Institut\\ Universit\"at M\"unster\\Einsteinstrasse 62\\D-48149 M\"unster\\Germany
}
\email{wiemelerm@uni-muenster.de}
\subjclass[2020]{53C20, 55N91, 57S15}
\keywords{positive curvature, torus symmetry, homotopy classification}
\thanks{The research for this paper was funded by the Deutsche Forschungsgemeinschaft (DFG, German Research Foundation) under Germany's Excellence Strategy EXC 2044 --390685587, Mathematics M\"unster: Dynamics--Geometry--Structure and through CRC1442 Geometry: Deformations and Rigidity at University of M\"unster.}
\date{\today}
\begin{document}

\begin{abstract}
  We show that closed, simply connected, positively curved 10-manifolds with effective, isometric actions of $3$-dimensional tori are homotopy spheres or homotopy complex projective spaces.
\end{abstract}

\maketitle

\section{Introduction}
\label{sec:intro}

The classification of simply connected closed manifolds which admit metrics of positive sectional curvature is a long-standing open problem in Riemannian geometry.  In addition to the paucity of examples, all known obstructions to positive sectional curvature are also obstructions to non-negative sectional curvature. Nonetheless, major breakthroughs have been made in understanding positively curved manifolds in the presence of symmetry \cite{MR992332,MR1255926,MR2051400,MR2139252,zbMATH06152271,KWW,Nienhaus,KWW2}.

One measure of the amount of symmetry of a Riemannian manifold, is its {\it symmetry rank}, defined to be the maximal dimension of a torus that acts isometrically and effectively on the manifold.  By \cite{MR1255926}, it is known that the symmetry rank of a positively curved closed \(2n\)-dimensional manifold \(M\) is bounded above by \(n\), and in case of equality \(M\) is diffeomorphic to \(S^{2n}, \mathbb{C}P^n\) or \(\mathbb{R} P^{2n}\).  However, there are classification results (up to homeomorphism or diffeomorphism) with lesser symmetry rank assumptions in small dimensions, for example the Gauss--Bonnet theorem in dimension $2$, \cite{MR992332} in dimension $4$ and \cite{MR2139252} in dimension $8$.
These results characterize positively curved manifolds of symmetry rank at least \(0\), \(1\) and \(3\), respectively.

Moreover, it follows from \cite{Nienhaus} that the Euler characteristic of \(M\) is positive if \(M\) is a positively curved, even-dimensional manifold of symmetry rank at least \(4\).



From the main result of \cite{MR2051400}, one sees that a closed, simply connected, positively curved $10$-manifold with symmetry rank $4$ has the homotopy type of $S^{10}$ or of $\C P^5$.  In this paper we show that the same conclusion holds under the weaker assumption of symmetry rank $3$.

\begin{thmint}
  \label{thm:main}
  Let \(M\) be a closed, simply connected, positively curved \(10\)-manifold with an isometric and effective $T^3$-action.
  Then \(H^*(M;\mathbb{Z})\) is isomorphic as a graded ring to \(H^*(S^{10};\mathbb{Z})\) or to \(H^*(\mathbb{C}P^5;\mathbb{Z})\).
  In particular, \(M\) is a homotopy sphere or a homotopy complex projective space.
\end{thmint}

Note that if we are in the second case of the theorem then work of Dessai and Wilking \cite{zbMATH02133491} on Petrie's conjecture implies that the Pontrjagin classes of \(M\) are standard, i.e. are the same as for a standard \(\mathbb{C} P^5\).
Combining this with simply connected surgery theory implies that the number of diffeomorphism types of manifolds as in the Main Theorem is finite \cite[p.51-52]{novikov10:_homot}.

It should be mentioned that \(S^{10}\) and \(\mathbb{C}P^5\) are the only known examples of closed, simply connected 10-manifolds which admit metrics of positive curvature.
\smallskip

Next we will describe the proof strategy.  The main ingredients are results of Amann and Kennard \cite{MR4130255}, localization theorems in equivariant cohomology (see \cite{zbMATH03988265}, \cite{zbMATH00051915}) and GKM theory (see \cite{MR1489894}, \cite{MR3456711}).

To be more precise, in \cite{MR4130255}, the authors used the connectedness lemma of \cite{MR2051400} together with some representation theory to obtain partial results for the cohomology ring  of a closed, simply connected, positively curved \(10\)-manifold \(M\) with \(T^3\)-symmetry.
In all except one case they determine the possible cohomology rings completely.
Here, we analyze the exceptional case and show that in this case \(M\) has the integral cohomology ring of \(\mathbb{C} P^5\).

Since the codimension of the fixed set of a circle action on a manifold is even, and the case of fixed point sets of codimension two is understood by \cite{MR1255926},
we have to study two subcases:
\begin{enumerate}
\item There is an \(S^1\subset T^3\) with a $S^1$-fixed point component of dimension $6$.
\item For all \(S^1\subset T^3\), the $S^1$-fixed point set is at most $4$-dimensional.
\end{enumerate}

The first case follows by results from \cite{MR2139252} and \cite{KWW}.
The second case is more involved.
In this case the action is of type GKM$_3$.
Using a spectral sequence argument we show that the rational cohomology of \(M\) is concentrated in even degrees.
When this is done, the computation of the rational cohomology ring of \(M\) follows from results in \cite{MR3456711}. 
Then it remains to show that the integral cohomology of \(M\) is torsion-free.
Our argument to show this is based on localization theorems in torus equivariant cohomology \cite{zbMATH03988265,zbMATH00051915} and classifications of positively curved manifolds with torus symmetry in dimensions less than $10$ \cite{MR992332,MR1255926,MR2139252}.

The remaining sections of this paper are structured as follows.
In Section \ref{sec:prelim} we collect the necessary preliminaries.
In Sections~\ref{sec:dim6} and~\ref{sec:dim4} we discuss the above mentioned subcases (1) and (2), respectively.
Finally, in Section~\ref{sec:complete} we complete the proof of the main theorem.

\subsection*{Acknowledgements}
 We would like to thank the anonymous referee for detailed comments which helped to improve the presentation of this paper.
We thank Burkhard Wilking for suggesting to look at torus actions on low dimensional positively curved manifolds.

\section{Preliminaries}
\label{sec:prelim}

In this section we collect some known results on positively curved manifolds with symmetry and also on the algebraic topology of smooth torus actions.

\subsection{Positive curvature and symmetry}
\label{sec:prelim_pos_sec}

Here we collect some known results on torus actions on positively curved manifolds.

The first result which we need is the following theorem due to Hsiang and Kleiner.
It gives a classification of positively curved manifolds with circle action in dimension four.

\begin{theorem}[\cite{MR992332}]
  \label{thm:hsiang-kleiner}
  Let \(M^4\) be a closed, simply connected, positively curved manifold with an effective, isometric \(S^1\)-action.
  Then \(M\) is homeomorphic to \(S^4\) or \(\mathbb{C} P^2\).
\end{theorem}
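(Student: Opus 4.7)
The plan is to combine the standard tools of isometric actions in positive curvature --- Berger's theorem on zeros of Killing fields, Frankel's theorem on intersections of totally geodesic submanifolds, and Synge's theorem --- with Freedman's topological classification of closed simply connected $4$-manifolds.

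First, Berger's theorem guarantees that the $S^1$-action has a nonempty fixed-point set $F := M^{S^1}$. Each component of $F$ is a closed totally geodesic submanifold of even codimension, hence in dimension $4$ it is either an isolated point or a totally geodesic surface $\Sigma$. Any such $\Sigma$ is orientable (its normal bundle inherits an orientation from the $S^1$-action, and $M$ is orientable) and inherits positive curvature, so Synge's theorem forces $\Sigma\cong S^2$. Frankel's theorem rules out two disjoint totally geodesic surfaces in $M$, since $2+2\ge 4=\dim M$; hence $F$ contains at most one fixed $2$-sphere.

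Next, from the Lefschetz fixed-point formula $\chi(M)=\chi(F)>0$ and $\chi(M)=2+b_2(M)$ (using simple connectedness and Poincar\'e duality), I would analyze the orbit space $M/S^1$ --- a simply connected topological $3$-manifold whose singular structure and boundary are dictated by the slice representations at $F$ --- to pin down $b_2(M)\in\{0,1\}$. When $F$ contains a fixed $S^2$, this sphere represents a nontrivial generator of $H_2(M;\mathbb{Z})$, and combined with Poincar\'e duality this forces the intersection form to be $\langle\pm 1\rangle$, so Freedman identifies $M\cong\C P^2$. When $F$ is a finite set of fixed points, the orbit-space analysis together with the weight data at each fixed point should yield $b_2(M)=0$, so by Freedman $M\cong S^4$.

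The main obstacle will be carrying out the orbit-space bookkeeping, in particular ruling out larger intersection forms such as those of $S^2\times S^2$ or $\C P^2\#\overline{\C P^2}$, where the \emph{strict} positivity of sectional curvature is essential: $S^2\times S^2$ admits non-negatively curved isometric $S^1$-actions, so the rigidity genuinely comes from Frankel (forbidding two disjoint fixed surfaces) combined with the slice-weight constraints at isolated fixed points. This combinatorial analysis --- the heart of the Hsiang--Kleiner argument --- is where I expect the real work to lie.
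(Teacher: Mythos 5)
This theorem is not proved in the paper at all: it is the Hsiang--Kleiner theorem, imported verbatim from \cite{MR992332}, so there is no internal argument to compare against. Your proposal is a faithful outline of the \emph{strategy} of the cited reference (Berger's fixed-point theorem, classification of fixed components as isolated points or totally geodesic $2$-spheres, Frankel, the Lefschetz identity $\chi(M)=\chi(M^{S^1})$, and Freedman's classification), and those framing steps are all correct. A small quibble: for a fixed surface $\Sigma$ one concludes $\Sigma\cong S^2$ from Gauss--Bonnet (it is closed, orientable, and positively curved as a totally geodesic submanifold); Synge's theorem is not really the right tool there, though the conclusion stands.

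The genuine gap is that the step you defer as ``where I expect the real work to lie'' is in fact the entire content of the theorem, and the tools you name do not suffice to carry it out. Frankel's theorem only forbids two disjoint fixed components whose dimensions sum to at least $4$; it says nothing about a fixed $2$-sphere coexisting with several isolated fixed points, nor about four or more isolated fixed points. Without an additional geometric input one cannot exclude $\chi(M)\geq 4$, hence cannot rule out intersection forms of rank $\geq 2$ such as those of $S^2\times S^2$ or $\mathbb{C}P^2\#\overline{\mathbb{C}P^2}$ --- and, as you yourself observe, $S^2\times S^2$ carries nonnegatively curved isometric circle actions, so the exclusion must use strict positivity in an essential quantitative way. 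In \cite{MR992332} this is done by a second-variation/angle-comparison argument on minimal geodesics joining fixed-point components (in modern treatments, by comparison geometry on the Alexandrov space $M/S^1$, e.g.\ extent or packing estimates showing at most three isolated fixed points, and that a fixed $2$-sphere admits at most one additional isolated fixed point). Until that estimate is supplied, the claim $b_2(M)\in\{0,1\}$ --- and with it the application of Freedman --- is unsupported, so the proposal as written does not constitute a proof.
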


It has been shown by Grove and Searle that the dimension of a torus which acts effectively and isometrically on an \(n\)-dimensional positively curved manifold is bounded from above by \([\frac{n+1}{2}]\).
For the case that the dimension of the acting torus is maximal they also showed:

\begin{theorem}[\cite{MR1255926}]
  \label{thm:grove_searle}
  Let \(M^n\) be a closed, simply connected, positively curved manifold. Assume that one of the following two conditions is satisfied.
  \begin{enumerate}
  \item There is an effective, isometric \(S^1\)-action on \(M\) with \(\codim M^{S^1}=2\).
  \item $M$ has symmetry rank at least $n/2$.
  \end{enumerate}
  Then \(M\) is diffeomorphic to \(S^{n}\) or \(\mathbb{C} P^{n/2}\).
\end{theorem}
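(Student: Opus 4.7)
My plan is to prove part~(1) directly, using the geometry of the codimension-$2$ fixed component, and to then reduce part~(2) to part~(1) by an isotropy representation argument at a torus fixed point. The geometric core is part~(1).

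For part~(1), let $F$ be a component of $M^{S^1}$ with $\codim F = 2$. Then $F$ is totally geodesic and positively curved, and its oriented normal bundle $\nu F$ has rank $2$, with $S^1$ acting by fiberwise rotation (of weight $1$, by effectiveness and simple connectedness). I would analyze the distance function $f = d(\cdot, F)$: its maximum is attained on a closed $S^1$-invariant set $E$ disjoint from $F$. A first-variation computation combined with Toponogov comparison (in the spirit of the dual soul construction of Cheeger--Gromoll and Perelman's distance estimates) yields $\dim E + \dim F \leq n-1$, hence $\dim E \leq 1$. If $\dim E = 0$, then $E$ is a single $S^1$-fixed point $p$; the weights of the isotropy representation on $T_pM$, together with the structure near $F$, force the action to extend compatibly with a complex structure, and the double-disk-bundle reconstruction $M = D(\nu F)\cup_{S(\nu F)} D^n$ realizes $\mathbb{C}P^{n/2}$. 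If $\dim E = 1$, then $E$ is a single free $S^1$-orbit, the analogous reconstruction has a tubular neighborhood of this circle as the second piece, and the result $M \cong S^n$ follows.

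For part~(2), assume the symmetry rank $k \geq n/2$; by the Grove--Searle rank bound, $k = n/2$ for $n$ even. Iterating Berger's theorem (a nonzero Killing field on a positively curved even-dimensional manifold has a zero) over a chain of subtori produces a point $p \in M^{T^k}$. The isotropy $T^k \hookrightarrow O(T_pM)$ is faithful and decomposes $T_pM$ into two-dimensional real $T^k$-modules with weights $\phi_1,\dots,\phi_{n/2} \in \Hom(T^k, S^1)$ spanning the character lattice. Choosing $S^1 \subset T^k$ inside $\bigcap_{i<n/2} \ker(\phi_i)$ but outside $\ker(\phi_{n/2})$ gives an effective circle whose fixed component through $p$ has dimension exactly $n-2$, and part~(1) concludes the argument. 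The main obstacle is the dimension estimate $\dim E \leq 1$ in part~(1), which packages the essential positive-curvature input and demands careful comparison-geometry arguments together with a treatment of the nonsmooth distance function near its critical sets; once this is in hand, the topological reconstruction of $M$ and the reduction in part~(2) are comparatively direct.
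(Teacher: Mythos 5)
The paper does not prove this statement; it is quoted verbatim from Grove--Searle \cite{MR1255926}, and your outline does follow their strategy (distance function from the codimension-two fixed component $F$, the set $E$ at maximal distance, a double disk bundle decomposition, and reduction of (2) to (1) via the isotropy representation at a common fixed point). However, the two steps carrying essentially all of the difficulty are asserted rather than proved. First, the passage from $\dim E\le 1$ to ``$E$ is a single fixed point or a single free orbit'' is a genuine gap: a priori $E$ need not be connected, need not be a submanifold, and if one-dimensional it could be a pointwise fixed circle or an exceptional orbit $S^1/\mathbb{Z}_k$. The exceptional-orbit case is precisely what produces the lens-space quotients in the non-simply-connected version of the theorem, and it is excluded only after one has the decomposition $M=D(\nu F)\cup D(\nu E)$ and computes $\pi_1(M)$ by van Kampen. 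The decomposition itself requires showing that $d(\cdot,F)$ has no critical points outside $F\cup E$; Grove--Searle obtain both facts from the structure of $M/S^1$ as a positively curved space with boundary $F$, whose set at maximal distance from the boundary is a single point, i.e.\ a single orbit upstairs. Second, ``the weights \dots force the action to extend compatibly with a complex structure, and the reconstruction realizes $\mathbb{C}P^{n/2}$'' is not an argument for a \emph{diffeomorphism} statement: at that stage $F$ is only known to be a closed positively curved $(n-2)$-manifold whose unit normal circle bundle is diffeomorphic to $S^{n-1}$, and one must both identify $F$ and control the gluing diffeomorphism of $S^{n-1}$ up to isotopy, since smooth structures on twisted doubles are sensitive to the gluing. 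Grove--Searle handle this by showing the action is equivariantly diffeomorphic to a linear model.

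A smaller but real gap is in part (2) for odd $n$: Berger's theorem produces a zero of a Killing field only in even dimensions, so your construction of a common fixed point $p\in M^{T^k}$, and hence of the codimension-two circle, does not apply when $n$ is odd (where the hypothesis forces $k\ge (n+1)/2$ and the conclusion is $M\cong S^n$); a separate argument with isotropy groups is needed there. By contrast, the weight-one claim for the normal $S^1$-action along $F$ and, for even $n$, the choice of a circle in $\bigcap_{i<n/2}\ker\phi_i$ not killed by $\phi_{n/2}$ are correct and routine.
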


This diffeomorphism classification has been extended to a homeomorphism classification by Fang and Rong in the case that the manifold is of dimension $8$ and the symmetry rank is almost maximal.

\begin{theorem}[{\cite[Theorem A]{MR2139252}}]
  \label{thm:fang_rong1}
  Let \(M^8\) be a closed, simply connected, positively curved manifold with an effective, isometric \(T^3\)-action.
  Then \(M\) is homeomorphic to \(S^8\), \(\mathbb{C} P^4\) or \(\mathbb{H} P^2\).
\end{theorem}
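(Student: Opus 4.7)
The plan is to study the $T^3$-action through its fixed-point structure and bootstrap from the classifications in Theorem~\ref{thm:hsiang-kleiner} and Theorem~\ref{thm:grove_searle}. By Berger's theorem the $T^3$-action has a fixed point $p$, and the isotropy representation on $T_pM\cong\mathbb{R}^8$ decomposes into four real two-dimensional weight spaces with weights $\alpha_1,\dots,\alpha_4$ lying in the integral weight lattice of $T^3$. Every fixed component of a subtorus of $T^3$ is totally geodesic, so the positive curvature hypothesis gives access to Wilking's connectedness lemma and Frankel-type intersection theorems, and the slice theorem translates questions about fixed components of subtori into combinatorics of the weights $\alpha_i$.

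The argument naturally splits according to the minimal codimension of a circle-fixed set. If some circle $S^1\subset T^3$ has a fixed component of codimension two in $M$, Theorem~\ref{thm:grove_searle}(1) applied to this circle immediately yields that $M$ is diffeomorphic to $S^8$ or $\mathbb{C}P^4$, and we are done. Otherwise every subcircle of $T^3$ has fixed set of codimension at least four; inspection of the weights $\alpha_i$ at $p$ then shows that there is a subtorus $T^2\subset T^3$ whose fixed component $F$ through $p$ is four-dimensional. The residual circle $T^3/T^2$ acts effectively and isometrically on $F$, which is closed, simply connected (via Wilking's connectedness principle for totally geodesic submanifolds together with the $T^2$-symmetry) and positively curved in the induced metric, so Theorem~\ref{thm:hsiang-kleiner} identifies $F$ with $S^4$ or $\mathbb{C}P^2$.

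With this four-dimensional foothold $F$ in place I would determine the cohomology ring of $M$ by combining the equivariant cohomology of the $T^3$-action, in particular the GKM-style incidence data formed by $F$ and the other small-codimension fixed components at the various weight configurations, with the Borel localization theorem and Poincar\'e duality. The expected output is that $H^*(M;\mathbb{Z})$ coincides with one of $H^*(S^8)$, $H^*(\mathbb{C}P^4)$, or $H^*(\mathbb{H}P^2)$. Finally, to upgrade a cohomological identification to a homeomorphism classification I would invoke simply-connected surgery theory, using the tangential data furnished by the isotropy representation at $p$ together with the constraints that positive curvature with symmetry impose on the Pontryagin classes. The main obstacle is this last combined cohomology-and-surgery step: it is genuinely delicate to rule out rings with torsion or with generators in unexpected degrees (for instance, one has to argue why no intermediate generator between degrees $4$ and $8$ can appear when $F\cong\mathbb{C}P^2$), and once the rational type is pinned down one still has to show that the tangential invariants force the homeomorphism type to be exactly one of the three model spaces, rather than some exotic topological companion not distinguished by cohomology alone.
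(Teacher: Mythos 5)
This statement is quoted from Fang--Rong \cite{MR2139252} and the paper offers no proof of it, so your proposal has to stand on its own as a proof of a substantial external theorem; as written it does not. The first concrete problem is the step where, assuming every circle has fixed sets of codimension at least four, you claim that ``inspection of the weights $\alpha_i$ at $p$'' produces a two-torus $T^2\subset T^3$ with a four-dimensional fixed component through $p$. For a codimension-one subtorus $T^2\subset T^3$, a weight $\alpha_i$ vanishes on $T^2$ exactly when $T^2$ is the identity component of $\ker\alpha_i$, so two weights vanish on the same $T^2$ only if they are proportional. For the linear $T^3$-action on $\mathbb{H}P^2$ the weights at a fixed point are pairwise linearly independent, so no such $T^2$ exists; since $\mathbb{H}P^2$ must be an allowed outcome, this step is false in general and the ``four-dimensional foothold'' on which the rest of your argument rests is not available. (One can find four-dimensional fixed components of finite subgroups or of circles, but that is a different and weaker statement, and extracting the classification from it is precisely the hard work of Fang--Rong's paper.)

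The second problem is that the remaining two steps --- pinning down $H^*(M;\mathbb{Z})$ as one of the three model rings, and upgrading a cohomology isomorphism to a homeomorphism --- are stated as intentions (``I would determine\dots'', ``the expected output is\dots'', ``I would invoke simply-connected surgery theory'') rather than carried out, and you yourself flag them as the main obstacle. These are not routine: ruling out torsion and extra generators requires the detailed fixed-point and connectedness analysis that occupies most of \cite{MR2139252}, and the topological structure sets of $\mathbb{C}P^4$ and $\mathbb{H}P^2$ are nontrivial, so ``cohomology plus surgery formalism'' does not by itself determine the homeomorphism type without concrete geometric input (e.g.\ control of the Pontryagin classes or of splitting invariants). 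Only the very first branch of your argument --- a codimension-two circle-fixed component forces $M\cong S^8$ or $\mathbb{C}P^4$ via Theorem~\ref{thm:grove_searle} --- is complete; the rest is an outline with a broken hinge, not a proof.
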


From their proof we also need the following lemma.

\begin{lemma}[{\cite[Lemma 2.1]{MR2139252}}]
  \label{lem:fang_rong2}
  Let \(M^n\) be a closed, simply connected, positively curved manifold. Assume that \(M\) admits an effective, isometric \(T^{ [(n-1)/2 ]}\) -action. If there is a non-trivial isotropy group with codimension 2 fixed point set, then \(M\) is homeomorphic to a sphere or a complex projective space.
\end{lemma}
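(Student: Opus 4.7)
The plan is to reduce the statement to Theorem~\ref{thm:grove_searle} by producing a circle subgroup of $T^k$, where $k=[(n-1)/2]$, whose fixed point set has a codimension-$2$ component in $M$. Let $H$ denote the given non-trivial isotropy and let $F\subseteq M^H$ be a component of codimension $2$; as a component of a fixed set of isometries, $F$ is closed, totally geodesic, and inherits a positively curved metric. By Wilking's connectedness lemma, the inclusion $F\hookrightarrow M$ is $(n-3)$-connected, so $F$ is simply connected once $n\ge 5$; the small cases $n\le 4$ are covered directly by Theorem~\ref{thm:hsiang-kleiner}.

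First I would dispose of the easy sub-cases by producing a circle fixing $F$. If the identity component $H_0$ is non-trivial, any $S^1\subseteq H_0$ satisfies $M^{S^1}\supseteq F$; effectiveness of the $T^k$-action forces $S^1$ to act non-trivially on the connected manifold $M$, so each component of $M^{S^1}$ has even codimension at least $2$, and the component through $F$ must have codimension exactly $2$. Theorem~\ref{thm:grove_searle}(1) then concludes. If instead $H$ is finite, the slice representation of $H$ on the oriented normal $2$-plane $\nu_xF$ at $x\in F$ is faithful and factors through $SO(2)$, so $H$ is cyclic; moreover, since $T^k$ is abelian it preserves $M^H$ and induces an action on $F$ with some kernel $K\supseteq H$. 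When $K$ has positive dimension, the same argument applied to a circle in $K_0$ finishes.

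The remaining case is that $K$ is also finite, so $T^k/K$ is a rank-$k$ torus acting effectively on $F$. A direct check shows that $k=[(n-1)/2]$ coincides with the maximal possible symmetry rank on a positively curved manifold of dimension $n-2$, so Theorem~\ref{thm:grove_searle}(2) identifies $F$ diffeomorphically with $S^{n-2}$ or $\mathbb{C} P^{(n-2)/2}$. Combining the $(n-3)$-connectedness of $F\hookrightarrow M$ with Poincar\'e duality then forces $H^*(M;\mathbb{Z})$ to agree as a graded ring with $H^*(S^n;\mathbb{Z})$ or $H^*(\mathbb{C} P^{n/2};\mathbb{Z})$, and the homeomorphism type is obtained from a gluing description of $M$ as an equivariant tubular neighborhood of $F$ glued to its complement, together with the classification of simply connected $n$-manifolds realizing these cohomology rings. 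This residual case is the main obstacle: no circle of $T^k$ fixes $F$ pointwise, so Theorem~\ref{thm:grove_searle}(1) is unavailable for $M$ itself, and upgrading the diffeomorphism classification of the codimension-$2$ submanifold $F$ to a homeomorphism classification of $M$ requires the nontrivial topological input just described.
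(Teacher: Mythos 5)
The paper does not actually prove this lemma; it is quoted verbatim from Fang--Rong \cite[Lemma 2.1]{MR2139252}, so your argument has to stand on its own. Your reductions are correct as far as they go: if some circle in $T^{[(n-1)/2]}$ has a codimension-two fixed point component in $M$ you are done by Theorem~\ref{thm:grove_searle}(1), and otherwise the pointwise stabilizer of $F$ is finite, so $F$ carries an effective action of a torus whose rank $[(n-1)/2]=[((n-2)+1)/2]$ is maximal for dimension $n-2$, whence Theorem~\ref{thm:grove_searle}(2) gives $F$ diffeomorphic to $S^{n-2}$ or $\mathbb{C}P^{(n-2)/2}$ (with simple connectivity of $F$ and the low-dimensional cases handled as you indicate). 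In the sphere case the $(n-3)$-connectedness of $F\hookrightarrow M$ plus Poincar\'e duality makes $M$ a homotopy sphere and the conclusion follows.

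The gap is in the remaining case $F\cong\mathbb{C}P^{(n-2)/2}$. There, Lemma~\ref{lem:wilking3} and Poincar\'e duality do give $H^*(M;\mathbb{Z})\cong H^*(\mathbb{C}P^{n/2};\mathbb{Z})$ and hence, as in Section~\ref{sec:complete}, a homotopy equivalence $M\simeq\mathbb{C}P^{n/2}$. But the lemma asserts a \emph{homeomorphism}, and there is no ``classification of simply connected $n$-manifolds realizing these cohomology rings'' to invoke: by Sullivan's computation of the structure set, for $m\geq 3$ there are infinitely many pairwise non-homeomorphic manifolds homotopy equivalent to $\mathbb{C}P^{m}$, distinguished by splitting invariants and Pontrjagin classes. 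This is precisely why Theorem~\ref{thm:wilking1} and the Main Theorem of the present paper conclude only homotopy equivalence in the $\mathbb{C}P^{n/2}$ case. The ``gluing description of $M$ as an equivariant tubular neighborhood of $F$ glued to its complement'' is also not available as stated: the Grove--Searle double disk bundle decomposition requires a circle acting transitively on the normal circles of $F$, whereas in your residual case $F$ is fixed pointwise only by a finite cyclic group. Pinning down the homeomorphism type here requires genuinely more input (controlling the normal invariants of the homotopy equivalence using the isometric actions), which is the substantive content of Fang--Rong's argument and is absent from your proposal.
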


We also need the following topological result in dimension seven.

\begin{prop}[{\cite[Proposition 8.1]{MR2139252}}]
  \label{prop:fang_rong3}
  Let \(M^7\) be a closed, simply connected manifold. Assume that \(M\) admits an effective \(T^3\)-action such that
\begin{enumerate}
\item  there is no circle subgroup with fixed point set of codimension 2;
\item  all isotropy groups are connected;
\item  each fixed point component of any isotropy group is either a circle or a
lens space of dimension 3.
\end{enumerate}
Then \(M\) is homeomorphic to \(S^7\) if and only if \(b_2 (M) = 0\).
\end{prop}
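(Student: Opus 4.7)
The direction $M\cong S^7\Rightarrow b_2(M)=0$ is immediate. For the converse, assume $b_2(M)=0$. My aim is to show that $M$ is an integral homology $7$-sphere; since $M$ is closed, simply connected, and of dimension $7$, the topological Poincar\'e conjecture then gives $M\cong S^7$.

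Let $\pi\colon M\to X:=M/T^3$ be the orbit map. The first step is to extract the orbit combinatorics. Because the principal isotropy must be a connected subtorus that acts trivially on $M$, the effectiveness of the action forces it to be trivial; so $X$ is $4$-dimensional. Since all isotropy groups are connected and $M$ is simply connected, $X$ is a simply connected stratified $4$-space. Conditions (2) and (3) organize the strata: each codimension-one stratum $F\subset X$ is labeled by a circle isotropy $S^1_F\subset T^3$, with $\pi^{-1}(F)$ deformation-retracting onto a $3$-dimensional lens space $L_F$; codimension-two and -three strata correspond to $T^2$- and $T^3$-isotropies, and by the analysis of slice representations they lift respectively to surfaces and to circles. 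Condition (1) rules out circles with codimension-two fixed set in $M$, which gives strong local compatibility constraints on how the strata meet. In analogy with the theory of torus manifolds, one should then be able to reconstruct $M$ up to equivariant homeomorphism from the pair $(X,\{S^1_F\})$.

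From this data I would carry out a Mayer--Vietoris computation of $H_*(M;\mathbb{Z})$. Write $M=U\cup V$, where $V=\pi^{-1}(\mathrm{int}\,X)$ is the free part (a principal $T^3$-bundle over $\mathrm{int}\,X$) and $U$ is an invariant tubular neighborhood of $\pi^{-1}(\partial X)$, assembled from the lens spaces $L_F$ and the normal disc bundles along them. The Mayer--Vietoris sequence then expresses $H_*(M)$ in terms of the homology of $V$, $U$, and their intersection $U\cap V$. First, working rationally: the lens spaces contribute no rational cohomology in degree $2$, so the hypothesis $b_2(M)=0$ combined with Poincar\'e duality and simple-connectedness should force $b_i(M;\mathbb{Q})=0$ for all $0<i<7$, i.e.\ $M$ is a rational homology sphere. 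Second, working integrally: although each $L_F$ carries torsion in $H_1$ and $H_2$, a careful tracking of the Mayer--Vietoris boundary maps should show that this torsion is killed in $H_*(M;\mathbb{Z})$.

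The main obstacle I anticipate is the integral bookkeeping in the final step. One must argue that $H_*(M;\mathbb{Z})$ is torsion-free despite the torsion contributed by the lens space pieces, and the hypothesis $b_2(M)=0$ must be used decisively: it prevents residual torsion in $H_2(M)$, and by Poincar\'e duality in $H_4(M)$. Once $H_i(M;\mathbb{Z})=0$ for $0<i<7$ is established, the topological Poincar\'e conjecture yields the desired homeomorphism $M\cong S^7$.
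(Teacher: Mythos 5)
The paper does not prove this proposition; it is imported verbatim from Fang--Rong \cite[Proposition 8.1]{MR2139252}, so there is no in-paper argument to compare against. Judged on its own terms, your proposal is a plausible outline of the kind of orbit-space argument Fang and Rong use, but as written it has a genuine gap at its core. The decisive point is not $b_2$: simple-connectedness and Poincar\'e duality already give $b_1=b_6=0$ and $b_2=b_5$, so the hypothesis $b_2(M)=0$ only kills degrees $2$ and $5$. What remains --- and what the whole proposition turns on --- is showing $b_3(M)=b_4(M)=0$ and the absence of torsion, and neither is addressed. Your justification for the rational step (``the lens spaces contribute no rational cohomology in degree $2$'') is irrelevant to degree $3$: the lens-space strata are rational homology $3$-spheres with $H^3\neq 0$, and the free part $V=\pi^{-1}(\mathrm{int}\,X)$ is a principal $T^3$-bundle over an open $4$-manifold, which generically has large $H^3$. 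A manifold such as $S^3\times S^4$ is simply connected with $b_2=0$ and $b_3=1$, so conditions (1)--(3) on the action must enter essentially to rule out degree-$3$ classes; your sketch never uses them for this purpose. The integral torsion-freeness, which you yourself flag as ``the main obstacle,'' is likewise asserted rather than argued, and it is exactly where conditions (1)--(3) (connected isotropy, no codimension-two fixed sets, lens-space or circle fixed components) do the work.

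Two smaller issues: the claim that $M$ can be reconstructed up to equivariant homeomorphism from $(X,\{S^1_F\})$ ``in analogy with the theory of torus manifolds'' is unsubstantiated and, fortunately, not needed for a homology computation --- you should either prove it or drop it. And the final appeal to the topological Poincar\'e conjecture is fine (a simply connected integral homology $7$-sphere is a homotopy sphere by Hurewicz and Whitehead, hence homeomorphic to $S^7$), but it is the only step of the converse direction that is actually complete. As it stands the proposal is a strategy statement, not a proof.
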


All of the above results have been generalized by Wilking as follows.

\begin{theorem}[{\cite[Theorem 2]{MR2051400}}]
\label{thm:wilking1}
  Let \(M^n\) be a closed, simply connected, positively curved manifold, \(n\geq 10\), and let $d\geq \frac{n}{4}+1$.
Suppose that there is an effective, isometric action of a torus \(T^d\) on \(M\).
Then \(M\) is homeomorphic to \(\mathbb{H} P^{n/4}\) or to \(S^n\), or \(M\) is homotopy equivalent to \(\mathbb{C} P^{n/2}\).
\end{theorem}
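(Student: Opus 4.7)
The plan is to combine Wilking's connectedness lemma (a companion result in the same paper \cite{MR2051400}) with an induction on $n$, using Theorems~\ref{thm:hsiang-kleiner}, \ref{thm:grove_searle}, and~\ref{thm:fang_rong1} as base cases. Fix a subcircle $S^1\subset T^d$ and a connected component $F^{n-2k}$ of its fixed point set. If $k=1$ for some such circle, Theorem~\ref{thm:grove_searle}(1) gives the classification directly, so we may assume $k\ge 2$ for every subcircle and every component. Choose $F$ of maximal dimension $n-2k$.

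Then $F$ is totally geodesic, hence positively curved, and the quotient $T^d/S^1$ of dimension $d-1$ acts on $F$ effectively modulo a finite kernel. Since
\[
 d-1 \;\geq\; \tfrac{n}{4} \;=\; \tfrac{n-2k}{4} + \tfrac{k}{2} \;\geq\; \tfrac{\dim F}{4}+1,
\]
the residual symmetry rank on $F$ meets the inductive threshold, and (using the connectedness lemma once more to ensure $F$ is simply connected) the inductive hypothesis shows $F$ is homeomorphic to $S^{n-2k}$ or $\mathbb{H} P^{(n-2k)/4}$, or homotopy equivalent to $\mathbb{C} P^{(n-2k)/2}$. The connectedness lemma further asserts that $F\hookrightarrow M$ is roughly $(n-4k+2)$-connected; together with Poincar\'e duality on $M$ this propagates the ring structure of $H^*(F;\mathbb{Z})$ to $H^*(M;\mathbb{Z})$, and a case-by-case analysis rules out all graded rings except those of $S^n$, $\mathbb{C} P^{n/2}$, and $\mathbb{H} P^{n/4}$. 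Simply-connected surgery theory then upgrades the cohomology-ring isomorphism to the stated topological conclusion: a homeomorphism in the $S^n$ and $\mathbb{H} P^{n/4}$ cases, and a homotopy equivalence in the $\mathbb{C} P^{n/2}$ case.

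The main obstacle is the smallest inductive step $k=2$, where the connectedness gap $n-4k+2=n-6$ is minimal. In this range one must ensure that the residual torus action on $F$ remains effective of sufficient rank (rather than collapsing modulo a large kernel), that $F$ is simply connected so that the induction can be invoked, and, most delicately, that no exotic graded ring on $M$ is compatible both with restriction to the highly connected $F$ and with Poincar\'e duality. Iterating the connectedness lemma through a flag of nested fixed-point sets of a chain of subtori is typically the cleanest way to handle these rigidity questions.
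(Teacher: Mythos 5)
First, note that Theorem~\ref{thm:wilking1} is quoted from Wilking's paper \cite{MR2051400} and is not proved in the present paper, so your attempt can only be measured against Wilking's original argument. Your overall flavour (connectedness lemma plus induction through totally geodesic fixed point components) is indeed the right one, but there is a decisive gap at the very first step: you take $F$ to be a circle fixed point component ``of maximal dimension $n-2k$'' and then proceed as if $k$ were small, yet you never produce any upper bound on $k$. The entire content of the hypothesis $d\ge \frac n4+1$ is to guarantee, via the isotropy representation at a point fixed by $T^d$ (or by a corank-one subtorus in the odd-dimensional case) and a careful weight-counting/involution argument, that some circle or involution has a fixed point component of codimension small compared to $n$. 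Without that bound, the connectivity $n-2k+1+\delta$ from Theorem~\ref{thm:wilking2} can be negative, $F$ need not be simply connected, your inequality $d-1\ge \frac{\dim F}{4}+1$ is vacuous because the induction never gets off the ground, and no cohomological information propagates from $F$ to $M$. Relatedly, you have the difficulty backwards: you call $k=2$ the case of ``minimal'' connectedness gap, but $n-4k+2$ \emph{decreases} as $k$ grows, so large codimension is the dangerous regime, which is exactly the one your choice of ``maximal $F$'' does not exclude.

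Two further steps are asserted rather than proved, and both are where Wilking does the real work. The claim that high connectivity of $F\hookrightarrow M$ plus Poincar\'e duality ``propagates the ring structure'' and that ``a case-by-case analysis rules out all graded rings except'' the three model ones is precisely the periodicity argument: Lemma~\ref{lem:wilking3} makes $H^*(M;\mathbb{Z})$ periodic with period $k$ in a range, and ruling out spurious periods requires Steenrod operations and Adams' Hopf-invariant-one theorem; this cannot be dismissed as routine. Finally, simply connected surgery theory does \emph{not} upgrade an isomorphism of cohomology rings to a homeomorphism: the structure set of $\mathbb{H}P^{n/4}$ is nontrivial, and Wilking obtains the homeomorphism in that case by a separate geometric recognition argument (a chain of totally geodesic quaternionic subspaces), while in the $\mathbb{C}P^{n/2}$ case only a homotopy equivalence is ever concluded. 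As written, your argument establishes the theorem only under the unjustified additional assumption that some circle has a fixed point component of codimension at most roughly $n/4$.
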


As a major ingredient in the proof of this result he showed:

\begin{theorem}[{\cite[Theorem 2.1]{MR2051400}}]
  \label{thm:wilking2}
  Let \(M^n\) be a compact Riemannian manifold with positive sectional curvature.
  \begin{enumerate}
  \item  Suppose that \(N^{n -k} \subset M^n\) is a compact totally geodesic embedded submanifold of
codimension \(k\). Then the inclusion map \(N\hookrightarrow M\)  is \((n - 2 k + 1 )\)-connected. If there is
a Lie group \(G\) acting isometrically on \(M\) and fixing \(N\) pointwise, then the inclusion
map is \(( n - 2 k + 1 +\delta(G) )\)-connected, where \(\delta(G)\) is the dimension of a principal orbit.
\item Suppose that \(N_1^{n-k_1}, N_2^{n-k_2} \subset M^n\) are two compact totally geodesic embedded submanifolds, \(k_1 \leq k_2\), \(k_1 +k_2 \leq n \). Then the intersection \(N_1\cap N_2\) is a totally geodesic embedded submanifold as well, and the inclusion
   \[N_1\cap N_2\hookrightarrow N_2\]
is \(( n - k_1 - k_2)\)-connected.
  \end{enumerate}
\end{theorem}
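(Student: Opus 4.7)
The plan is to prove both parts by Morse theory on the energy functional for appropriate path spaces, using positive sectional curvature to force the Morse indices at non-constant critical points to be large.

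For part (2), I would consider the path space $\Omega(N_1,N_2)$ of piecewise smooth curves $\gamma\colon[0,L]\to M$ with $\gamma(0)\in N_1$ and $\gamma(L)\in N_2$. Its critical points under the energy functional are the constant paths at points of $N_1\cap N_2$, together with the non-constant geodesics meeting both $N_i$ orthogonally. For such a critical geodesic $\gamma$, the totally-geodesic assumption on the $N_i$ kills the boundary contributions from the second fundamental forms, so the index form on variations with $V(0)\in T_{\gamma(0)}N_1$, $V(L)\in T_{\gamma(L)}N_2$ reduces to
\[
I(V,V)=\int_0^L\bigl(|\nabla_{\dot\gamma}V|^2-\langle R(V,\dot\gamma)\dot\gamma,V\rangle\bigr)\,dt.
\]
Since $\dot\gamma(0)\perp T_{\gamma(0)}N_1$ and $\dot\gamma(L)\perp T_{\gamma(L)}N_2$, both the parallel transport $P_1$ of $T_{\gamma(0)}N_1$ and the subspace $P_2:=T_{\gamma(L)}N_2$ lie inside the $(n-1)$-dimensional space $\dot\gamma(L)^\perp$, and by a codimension count their intersection has dimension at least $n-k_1-k_2+1$. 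For every parallel field $W$ along $\gamma$ whose endpoint values lie in the corresponding subspaces, $I(W,W)=-\int_0^L\langle R(W,\dot\gamma)\dot\gamma,W\rangle\,dt<0$ by positive sectional curvature, exhibiting a negative subspace of dimension at least $n-k_1-k_2+1$. Morse--Bott theory on the path space then gives the stated $(n-k_1-k_2)$-connectedness of $N_1\cap N_2\hookrightarrow N_2$. Total-geodesy of $N_1\cap N_2$ follows from the fact that the intersection's second fundamental form in $N_2$ is the restriction of that of $N_1$ in $M$, and transversality of the intersection is a short Jacobi-field argument using positive curvature.

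For part (1), apply the same scheme to the path space of curves from $N$ to a fixed point $q\in M\setminus N$, which is a model for the homotopy fiber of the inclusion $N\hookrightarrow M$ at $q$. Here a critical geodesic has its $q$-end fixed, so pure parallel fields no longer satisfy the boundary conditions of the index form; one must instead use variation fields of the form $V(t)=f(t)W(t)$ with $W$ parallel along $\gamma$ satisfying $W(0)\in T_{\gamma(0)}N$, and $f$ a carefully chosen test function with $f(0)=1$, $f(L)=0$. A delicate balance between the $(f')^2$ term and the curvature term in the resulting integral, which genuinely uses positivity of sectional curvature via a focal-point / Rauch-style comparison, produces the asserted $n-2k+1$ negative directions. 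In the equivariant refinement, the isometric $G$-action fixing $N$ produces from its Killing fields additional Jacobi fields along $\gamma$; these contribute $\delta(G)$ extra independent negative (or easily perturbed null) directions.

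The main obstacle I expect is the sharp index bound in part (1). Upgrading the naive estimate arising from a raw parallel-field count into the sharp $n-2k+1$ bound requires a genuine use of positive (as opposed to merely non-negative) sectional curvature through the test-function construction, and this is precisely where Wilking's technique improves over earlier Morse-theoretic connectedness results of Frankel. Properly accounting for the equivariant enhancement also demands careful bookkeeping to ensure that the Killing-field directions are independent of the parallel-field ones, so that one truly gains the full $\delta(G)$ extra negative directions.
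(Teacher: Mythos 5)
First, note that the paper you are reading does not prove this statement at all: it is Wilking's Connectedness Lemma, quoted verbatim from \cite{MR2051400} and used as a black box. So the comparison below is with Wilking's original proof. Your treatment of part (2) is essentially that proof: parallel fields $W$ along a horizontal geodesic with $W(0)\in T_{\gamma(0)}N_1$, $W(L)\in T_{\gamma(L)}N_2$ form a space of dimension at least $(n-k_1)+(n-k_2)-(n-1)=n-k_1-k_2+1$ inside $\dot\gamma(L)^\perp$, each is a negative direction by positive curvature and total geodesy, and Morse--Bott theory on $P(M;N_1,N_2)$ does the rest. (One bookkeeping point you gloss over: to convert ``$(P(M;N_1,N_2),N_1\cap N_2)$ is $(n-k_1-k_2)$-connected'' into ``$N_1\cap N_2\hookrightarrow N_2$ is $(n-k_1-k_2)$-connected'' you must control the fiber of the evaluation fibration $P(M;N_1,N_2)\to N_2$, and that fiber is exactly the homotopy fiber of $N_1\hookrightarrow M$; so part (2) quietly uses part (1).)

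The genuine gap is in part (1). Your proposed route --- Morse theory on the one-ended path space $P(M;N,q)$ with variation fields $f(t)W(t)$, $f(L)=0$, and a ``Rauch-style balance'' of $(f')^2$ against the curvature term --- cannot work. Positive sectional curvature with no uniform lower bound gives no quantitative control over $\int (f')^2$, and the obstruction is not technical but real: for $q$ close to $N$ the minimal normal geodesic from $N$ to $q$ is a non-constant critical point of arbitrarily small energy and has index $0$ for the boundary conditions $V(0)\in TN$, $V(L)=0$ (already for $M=S^2$, $N$ the equator, $q$ the pole, the short meridian has index $0$ and nullity $1$). No choice of test function produces $n-2k+1$ negative directions there. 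Wilking's actual argument for part (1) is the \emph{two-ended} construction with $N_1=N_2=N$: on $P(M;N,N)$ every non-constant horizontal geodesic admits parallel fields with both endpoint values in $TN$, a space of dimension at least $2(n-k)-(n-1)=n-2k+1$, all negative; the connectivity of the inclusion then follows from the evaluation fibration $P(M;N,N)\to N$, whose fiber is the homotopy fiber of $N\hookrightarrow M$ and which is split by the constant paths. Your equivariant enhancement is also off: the Jacobi fields $X\circ\gamma$ coming from Killing fields of $G$ vanish at \emph{both} endpoints (since $G$ fixes $N$ pointwise and both endpoints lie in $N$), so they are null, not negative, directions for the index form, and a bilinear form that is negative definite on one summand and zero on a complementary one need not have index equal to the total dimension; extracting the extra $\delta(G)$ from these null directions is the genuinely delicate step in Wilking's proof and is not supplied by your sketch.
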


The following result shows that the above theorem has strong implications on the topology of the manifold.

\begin{lemma}[{\cite[Lemma 2.2]{MR2051400}}]
  \label{lem:wilking3}
  Let \(M^n\) be a closed, differentiable, oriented manifold, and let \(N^{n-k}\) be an embedded, compact, oriented submanifold without boundary. Suppose that the inclusion
\(\iota: N \hookrightarrow M\)  is \(( n - k - l )\)-connected and \(n - k - 2 l > 0\) .
Let \([N] \in H_{ n - k} ( M , \mathbb{Z})\) be the image of the fundamental class of \(N\) in \(H_* ( M , \mathbb{Z})\), and let \(e \in  H^k ( M , \mathbb{Z})\) be its Poincar\'e
dual. Then the homomorphism
\[\cup e: H^i ( M , \mathbb{Z}) \rightarrow H^{i+k}(M, \mathbb{Z})\]
is surjective for \(l \leq i < n - k - l\) and injective for \(l < i \leq n - k - l\).
\end{lemma}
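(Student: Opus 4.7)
The plan is to rewrite $\cup e$ as a composition through $H^{*}(N;\mathbb{Z})$ using the projection formula, and then to read off its surjectivity/injectivity in each degree from the connectedness hypothesis applied separately to the two factors. Schematically, $\cup e$ factors as ``pull back to $N$, then push forward'', and each half is controlled by how close $\iota$ is to a homotopy equivalence.

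\textbf{Step 1 (projection formula).} Write $e=\iota_{!}(1)$, where $\iota_{!}\colon H^{i}(N;\mathbb{Z})\to H^{i+k}(M;\mathbb{Z})$ is the Gysin map of the oriented embedding $\iota$ (obtained from the Thom class of the normal bundle of $N$ in $M$ via excision). The projection formula $\iota_{!}(\iota^{*}\alpha\cup\beta)=\alpha\cup\iota_{!}\beta$ applied with $\beta=1$ yields, for every $\alpha\in H^{i}(M;\mathbb{Z})$,
\[
\alpha\cup e\;=\;\iota_{!}\bigl(\iota^{*}\alpha\bigr),
\]
so that $\cup e=\iota_{!}\circ\iota^{*}$.

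\textbf{Step 2 (ranges for $\iota^{*}$ and $\iota_{!}$).} Since $\iota$ is $(n-k-l)$-connected, relative Hurewicz gives $H_{j}(M,N;\mathbb{Z})=0$, and hence $H^{j}(M,N;\mathbb{Z})=0$, for $j\le n-k-l$. From the long exact sequence of the pair it follows that
\[
\iota^{*}\colon H^{i}(M;\mathbb{Z})\longrightarrow H^{i}(N;\mathbb{Z})
\]
is an isomorphism for $i<n-k-l$ and injective for $i=n-k-l$. Likewise $\iota_{*}\colon H_{j}(N;\mathbb{Z})\to H_{j}(M;\mathbb{Z})$ is an isomorphism for $j<n-k-l$ and surjective for $j=n-k-l$. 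Poincar\'e duality identifies $\iota_{!}$ in cohomological degree $i$ with $\iota_{*}$ in homological degree $n-k-i$, so $\iota_{!}\colon H^{i}(N;\mathbb{Z})\to H^{i+k}(M;\mathbb{Z})$ is an isomorphism for $i>l$ and surjective for $i\ge l$.

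\textbf{Step 3 (combining).} A composition of two maps is surjective exactly when each factor is, and is injective when each factor is. Combining the two ranges from Step 2, one reads off that $\cup e=\iota_{!}\circ\iota^{*}$ is surjective for $l\le i<n-k-l$ and injective for $l<i\le n-k-l$. The numerical hypothesis $n-k-2l>0$ is precisely what is needed for these intervals to be nonempty, which explains why it appears.

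\textbf{Main obstacle.} The argument is essentially formal once the identity $\cup e=\iota_{!}\circ\iota^{*}$ is in place, and the only real point requiring care is the translation of $\pi_{*}$-connectedness of $\iota$ into vanishing of the relative integral (co)homology in the appropriate range; this is where one invokes the relative Hurewicz theorem and where one would, if necessary, worry about fundamental groups, though in all intended applications in this paper the ambient manifold is simply connected and the issue does not arise.
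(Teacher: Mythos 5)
Your argument is correct: the factorization $\cup e=\iota_!\circ\iota^*$ via the projection formula, combined with the (co)homological consequences of $(n-k-l)$-connectedness and Poincar\'e duality, is exactly the standard proof, and it is essentially the one given for Lemma~2.2 in Wilking's paper \cite{MR2051400}, which the present paper only cites without reproving. The one point to phrase more carefully is the passage from $\pi_j(M,N)=0$ to $H_j(M,N;\mathbb{Z})=0$ for $j\le n-k-l$: rather than the relative Hurewicz theorem (whose clean form needs $\pi_1$ hypotheses), use the compression argument that an $m$-connected CW pair is homotopy equivalent rel the subspace to one with cells only in dimensions $>m$, which gives the required vanishing with no simple-connectivity assumption.
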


By Theorem~\ref{thm:wilking1}, a closed, simply connected, positively curved 10-manifold with \(T^4\)-symmetry is a homotopy $S^{10}$ or a homotopy $\C P^5$.
For such manifolds with isometric \(T^3\)-action \cite{MR4130255} contains a partial classification result.
We reformulate their results as follows.

\begin{theorem}
\label{thm:amann_kennard}
Let \(M^{10}\) be a closed, simply connected, positively curved manifold with \(T^3\)-symmetry.
Then one of the following three statements holds:
\begin{enumerate}
\item There is an involution in \(T^3\) fixing a $8$-dimensional submanifold of \(M\). In this case \(M\) is homeomorphic to \(S^{10}\) or \(\mathbb{C} P^5\).
\item \(M\) is a homology sphere, i.e., \(H^*(M;\mathbb{Z})=H^*(S^{10};\mathbb{Z})\).
\item All of the following statements hold: 
  \begin{itemize}
\item \(H_i(M;\mathbb{Z})=H_i(\mathbb{C}P^5;\mathbb{Z})\) for \(i\leq 3\)
\item \(H^4(M;\mathbb{Q})\neq 0\)
\item For the Euler characteristic of \(M\) we have \(\chi(M)=\chi(\mathbb{C}P^5).\)
\item There are two involutions in \(T^3\) fixing $6$-dimensional submanifolds \(N_1,N_2\subset M\).
\end{itemize}
\end{enumerate}
Here, by an involution we mean an element of order two in \(T^3\).
\end{theorem}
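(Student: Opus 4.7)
The plan is to perform a case analysis on the fixed point sets of the seven involutions in \(T^3\), using Wilking's connectedness theorem (Theorem~\ref{thm:wilking2}) and the cup-product lemma (Lemma~\ref{lem:wilking3}) together with the representation-theoretic constraints on isotropy weights coming from positive curvature, following the strategy of Amann--Kennard~\cite{MR4130255}. Each involution fixes a totally geodesic submanifold of even codimension, so in dimension \(10\) the possible component dimensions are \(0, 2, 4, 6, 8\).

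For Case (1), assume some involution \(\sigma\) has a component of \(M^\sigma\) of dimension \(8\). The \(T^3\)-isotropy representation on the rank-\(2\) normal bundle of this component is a single character \(\chi\colon T^3 \to S^1\) sending \(\sigma\) to \(-1\); from this one extracts a circle \(S^1 \subset T^3\) whose fixed set has codimension \(2\) in \(M\), and Lemma~\ref{lem:fang_rong2} then gives the desired homeomorphism classification. For the remaining cases we assume no involution fixes a codimension-\(2\) submanifold.

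Under this assumption, for each involution \(\sigma\) with \(\dim M^\sigma = 10 - k\), \(k \geq 4\), Wilking's theorem says \(M^\sigma \hookrightarrow M\) is \((11 - 2k)\)-connected, and Lemma~\ref{lem:wilking3} applied to the Poincar\'e dual \(e \in H^k(M;\mathbb{Z})\) of \([M^\sigma]\) shows that cup product with \(e\) controls large portions of \(H^*(M;\mathbb{Z})\). Combined with the representation-theoretic restrictions of~\cite{MR4130255} on the weights of the \(T^3\)-action at a \(T^3\)-fixed point in \(M\), one argues: if no involution has a \(6\)-dimensional fixed component, the cup-product estimates and Poincar\'e duality force the integral cohomology of \(M\) to match that of \(S^{10}\), yielding Case~(2). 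Otherwise there is an involution \(\sigma_1\) with \(N_1 := M^{\sigma_1}\) of dimension \(6\); the \(3\)-connectivity of \(N_1 \hookrightarrow M\) yields \(H_i(M;\mathbb{Z}) \cong H_i(N_1;\mathbb{Z})\) for \(i \leq 2\), and since \(N_1\) is a closed, simply connected, positively curved \(6\)-manifold admitting an effective isometric torus action of rank at least two, its low-degree cohomology agrees with that of a complex projective space. Together with the cup-product lemma and Poincar\'e duality this gives \(H_i(M;\mathbb{Z}) \cong H_i(\mathbb{C}P^5;\mathbb{Z})\) for \(i \leq 3\), \(H^4(M;\mathbb{Q}) \neq 0\), and (via the identity \(\chi(M) = \chi(M^{T^3})\)) \(\chi(M) = 6\). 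The existence of a second involution \(\sigma_2\) with \(\dim M^{\sigma_2} = 6\) is then deduced by analyzing the characters of the \(T^3\)-isotropy representation at the \(T^3\)-fixed points of \(M\) lying outside \(N_1\).

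The main obstacle in this approach is precisely this last step: producing the second \(6\)-dimensional involution fixed component \(N_2\) from the partial weight information at \(T^3\)-fixed points. This is where the delicate representation-theoretic analysis of Amann--Kennard becomes indispensable, requiring explicit bookkeeping of how the characters of \(T^3\) distribute among tangent directions at fixed points and matching this distribution against the Euler characteristic identity and the connectedness constraints supplied by Theorem~\ref{thm:wilking2}.
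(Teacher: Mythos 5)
Your outline has genuine gaps, and it is worth noting first that the paper does not reprove this statement at all: its proof is a citation, deducing case (1) from Lemma~4.1 of Amann--Kennard and the dichotomy between cases (2) and (3) from their Theorem~5.1 and Lemma~5.2. Your proposal attempts to reconstruct those results, but the hard parts are not carried out. Most importantly, everything that distinguishes case (2) from case (3) --- that the absence of a $6$-dimensional involution-fixed component forces $M$ to be an integral homology sphere, that $\chi(M)=6$, and above all the existence of a \emph{second} involution with $6$-dimensional fixed set --- is asserted with the phrase ``one argues'' and then explicitly deferred to ``the delicate representation-theoretic analysis of Amann--Kennard'' without supplying it. Since this weight bookkeeping at the $T^3$-fixed points is precisely the content of the theorem beyond what Wilking's connectedness lemma gives for free, the proposal is a plan rather than a proof. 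Note also that $\chi(M)=\chi(M^{T^3})$ does not by itself give $\chi(M)=6$: counting the fixed points is equivalent to bounding $b_4(M)$ and $b_5(M)$, which is exactly what is at stake (indeed Section~4 of the paper is devoted to pinning these down in a subcase).

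Two specific steps are also incorrect as stated. In case (1), an involution with a codimension-$2$ fixed component does \emph{not} yield a circle $S^1\subset T^3$ with $\codim M^{S^1}=2$: the identity component $T^2$ of the kernel of the normal character $\chi$ acts trivially on the normal fibre but in general acts nontrivially on the $8$-dimensional component $F$ itself, so $M^{T^2}$ need not have codimension $2$. Moreover, even granting such a circle, Lemma~\ref{lem:fang_rong2} requires a $T^{[(n-1)/2]}=T^4$-action when $n=10$, so it is not applicable here; one would instead use Theorem~\ref{thm:grove_searle}(1), or, as in Amann--Kennard, apply Theorem~\ref{thm:wilking2} directly to the $\mathbb{Z}_2$-fixed codimension-$2$ submanifold (giving a $7$-connected inclusion) and then run a Fang--Rong-type argument. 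Finally, your derivation of $H_i(M;\mathbb{Z})\cong H_i(\mathbb{C}P^5;\mathbb{Z})$ for $i\le 3$ rests on the claim that a closed, simply connected, positively curved $6$-manifold with an effective isometric $T^2$-action has the low-degree cohomology of $\mathbb{C}P^3$; Grove--Searle requires symmetry rank $3$ in dimension $6$, so this is unjustified with only rank $2$, and in Amann--Kennard the cohomological conclusions are in fact extracted from the interplay of the \emph{two} involutions rather than from $N_1$ alone.
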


\begin{proof}
  Case (1) follows from \cite[Lemma 4.1]{MR4130255}. If we are not in case (1) then by \cite[Theorem 5.1 and Lemma 5.2]{MR4130255}, we are in case (2) or (3).
\end{proof}

We also need the following version of \cite[Theorem 4.1]{KWW}.

\begin{theorem}\label{thm:kww} Suppose that the closed, simply connected, positively curved manifold $M^{4n+2}$ has the rational cohomology of $S^2\times \mathbb{H}P^n$.
We assume $M$ is endowed with an effective, isometric $T^2$-action with a fixed-point component
$F_0$, such that \(F_0\) is the transverse intersection of two submanifolds \(N_1\) and \(N_2\) which are fixed by circle subgroups of \(T^2\). 
Assume that \(N_i\)
has the rational cohomology of $S^{2}\times \mathbb{H}P^{n-k_i}$ for $i=1,2$.

Then there is a second \(T^2\)-fixed-point component $F_1$ with $H^*(F_1;\mathbb{Q})\cong H^*(S^2\times \mathbb{C}P^l;\mathbb{Q})$ 
with $l>0$.
\end{theorem}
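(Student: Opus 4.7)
The plan is to combine Wilking's connectedness theorems, equivariant formality of the $T^2$-action, and a careful analysis of the residual circle actions that $T^2$ induces on $N_1$ and $N_2$.

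First, I would compute the rational cohomology of $F_0 = N_1 \cap N_2$. By Theorem~\ref{thm:wilking2}(2), the inclusion $F_0 \hookrightarrow N_j$ is $(\dim F_0)$-connected for $j=1,2$, so pullback is an isomorphism below the top dimension of $F_0$. Combined with Poincar\'e duality on $F_0$ and the assumed rational cohomology rings of $N_1$ and $N_2$, this identifies $H^*(F_0;\mathbb{Q})$ with $H^*(S^2 \times \mathbb{H}P^{n-k_1-k_2};\mathbb{Q})$, giving $\chi(F_0) = 2(n-k_1-k_2+1)$.

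Next, I would invoke equivariant formality: since $H^{\mathrm{odd}}(M;\mathbb{Q}) = 0$, Borel's formula yields $\chi(M^{T^2}) = \chi(M) = 2(n+1)$. Restricting the $T^2$-action to $N_1$, the quotient circle $T^2/S^1_1$ acts isometrically (and, since $F_0 \subsetneq N_1$, nontrivially) on the positively curved manifold $N_1$, and every fixed-point component of this residual circle is automatically a $T^2$-fixed component of $M$. As $\chi((N_1)^{\mathrm{res}}) = \chi(N_1) = 2(n-k_1+1) > \chi(F_0)$, there is at least one further fixed-point component $F_1 \subset N_1$.

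To identify the ring $H^*(F_1;\mathbb{Q})$, I would apply Theorem~\ref{thm:wilking2}(1) and Lemma~\ref{lem:wilking3} to the inclusion $F_1 \hookrightarrow N_1$, where $H^*(N_1;\mathbb{Q})$ is generated by classes $\alpha \in H^2$ and $\beta \in H^4$ with $\alpha^2 = 0$ and $\beta^{n-k_1+1} = 0$. The Poincar\'e dual of $F_1$ in $N_1$ must be a polynomial in $\alpha$ and $\beta$, and the surjectivity/injectivity statement of Lemma~\ref{lem:wilking3}, together with Poincar\'e duality on $F_1$, constrains the multiplication pattern so that $H^*(F_1;\mathbb{Q})$ is isomorphic to $H^*(S^2 \times \mathbb{C}P^l;\mathbb{Q})$; this mirrors the way $\mathbb{C}P^l$ appears as the fixed set of a standard circle subaction on $\mathbb{H}P^l$.

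The main obstacle is to ensure that the resulting $l$ is strictly positive. A naive Euler characteristic count inside $N_1$ alone gives $\chi(F_1) = 2k_2$, which for $k_2 = 1$ would only produce an additional $S^2$, i.e.\ $l=0$. In this borderline case I would run the symmetric argument starting from $N_2$, whose residual $T^2/S^1_2$-circle produces a component with Euler characteristic $2k_1$; at least one of the two residual circle actions must yield a component with $l>0$, since otherwise equivariant localization applied to $M$ with its known rational cohomology ring would force an inconsistency in the equivariant multiplicative structure. Selecting the correct $N_i$ and matching its residual-circle fixed component with $S^2 \times \mathbb{C}P^l$ for some $l > 0$ is the delicate technical core of the argument.
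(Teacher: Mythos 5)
Your first step is correct and coincides with the paper's: using Theorem~\ref{thm:wilking2}(2) (with the convention $k_2\geq k_1$, so that $F_0\hookrightarrow N_2$ is $(2+4(n-k_1-k_2))$-connected) one identifies $H^*(F_0;\mathbb{Q})$ with $H^*(S^2\times\mathbb{H}P^{n-k_1-k_2};\mathbb{Q})$. From that point on, however, your proposal has a genuine gap: everything after the computation of $H^*(F_0)$ is asserted rather than proved, and the assertions do not follow from the tools you invoke. The step ``apply Theorem~\ref{thm:wilking2}(1) and Lemma~\ref{lem:wilking3} to $F_1\hookrightarrow N_1$ to constrain the multiplication pattern'' breaks down because you have no a priori control on the codimension $k$ of $F_1$ in $N_1$: the connectivity supplied by Theorem~\ref{thm:wilking2}(1) is $\dim N_1-2k+2$, which is vacuous once $k$ is large (for all you know at this stage, $F_1$ could be an isolated fixed point of the residual circle), and Lemma~\ref{lem:wilking3} then gives no information at all. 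Likewise, the count $\chi(F_1)=2k_2$ tacitly assumes the residual circle's fixed set in $N_1$ has exactly two components, which is unjustified; and the final claim that ``equivariant localization would force an inconsistency'' if every extra component had $l=0$ is not an argument --- it is precisely the statement you are trying to prove.

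For comparison, the paper does not re-derive this from scratch. After computing $H^*(F_0;\mathbb{Q})$ exactly as you do, it quotes Theorem~4.1 of \cite{KWW}, which is a substantial result whose conclusion is a dichotomy: either a second fixed-point component with the rational cohomology of $S^2\times\mathbb{C}P^l$, $l>0$, exists, or an alternative ``option b)'' holds. The remaining work is then a short geometric exclusion of option b), using that an isometric circle action on a positively curved manifold admits at most one codimension-two fixed-point component (a Frankel-type fact from the proof of the main result of \cite{MR1255926}). If you want a self-contained argument along your lines, you would essentially have to reprove \cite[Theorem 4.1]{KWW}, which requires a genuinely different analysis (of the isotropy representations and of the possible fixed-point data of circle actions on rational $S^2\times\mathbb{H}P^m$'s) than the connectedness-lemma bootstrapping you sketch.
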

\begin{proof}
  We may assume that \(k_2\geq k_1\).
  Then, by Theorem~\ref{thm:wilking2}, \(F_0\hookrightarrow N_2\) is \((2+4(n-k_1-k_2))\)-connected.
  In particular, \(F_0\) has the rational cohomology of \(S^2\times \mathbb{H} P^{n-k_1-k_2}\).
  
  So the claim of the theorem follows from a combination of Theorem 4.1 in \cite{KWW} and the proof of the main result in \cite{MR1255926}. Indeed, the option b) of the theorem in \cite{KWW} cannot occur because in the presence of positive curvature an isometric circle action can have at most one codimension-two fixed point component.
\end{proof}

\subsection{Equivariant cohomology and localization}
\label{sec:prelim_equi_cohom}

In this section we recall the basic results on equivariant cohomology and localization.
For more details on this subject we refer the reader to \cite[Chapters 5 and 6]{zbMATH00051915} and \cite[Chapter 3]{zbMATH03988265}.

Let \(T^d\) be a \(d\)-dimensional torus.
For convenience we restrict our discussion to \(T^d\)-spaces which are finite \(T^d\)-CW-complexes (see \cite[p. 98]{zbMATH03988265} for a precise definition).
Using equivariant Morse theory one can show that every compact smooth \(T^d\)-manifold is equivariantly homotopy equivalent to a finite \(T^d\)-CW-complex \cite[Satz 3.3]{zbMATH04100211}.
Moreover, if \(H\subset T^d\) is a subgroup and \(X\) is a finite \(T^d\)-CW-complex, then the orbit space \(X/H\) and the \(H\)-fixed point sets are finite \((T^d/H)\)-CW-complexes.

In the following let \(R=\mathbb{Q}\) or \(R=\mathbb{Z}\) and \(X\) a finite \(T^d\)-CW-complex.
The equivariant cohomology of \(X\) with coefficients in \(R\) is defined as
\[H^*_{T^d}(X;R)=H^*(X_{T^d};R),\]
where \(X_{T^d}=ET^d\times_{T^d}X\) is the Borel construction of \(X\), \(ET^d\rightarrow BT^d\) is the universal principal \(T^d\)-bundle and \(X\rightarrow X_{T^d}\rightarrow BT^d\) is the Borel fibration.

Then the projection \(\pi:X_{T^d}\rightarrow BT^d\) induces an \(H^*(BT^d;R)\)-algebra structure on \(H^*_{T^d}(X;R)\).
Note that \(H^*(BT^d;R)=R[t_1,\dots,t_d]\) with \(\deg t_i=2\) for all \(i=1,\dots,d\).

We say that the \(T^d\)-action on \(X\) is almost free if all isotropy groups of points in \(X\) are finite.
In this case the equivariant cohomology of \(X\) can be computed as follows.

\begin{lemma}
  \label{sec:equiv-cohom-local-4}
  Let \(X\) be a finite almost free \(T^d\)-CW-complex.
  Then the projection \(X_{T^d}\rightarrow X/T^d\) induces an isomorphism
  \[H^*_{T^d}(X;\mathbb{Q})\cong H^*(X/T^d;\mathbb{Q}).\]
\end{lemma}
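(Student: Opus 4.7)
The plan is to exploit that almost-freeness of the $T^d$-action forces all fibers of the projection $\pi\colon X_{T^d}\to X/T^d$ to be rationally acyclic, and then to show this implies $\pi$ is a rational cohomology equivalence.

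First I would identify the fibers of $\pi$. For $x\in X$ with isotropy $T_x\subset T^d$, the fiber of $\pi$ over the orbit $[x]\in X/T^d$ is naturally homeomorphic to $ET^d/T_x$. Since the action is almost free, $T_x$ is a finite group, and $ET^d/T_x$ is a model for $BT_x$. Because $T_x$ is finite, $H^i(BT_x;\mathbb{Q})=\mathbb{Q}$ for $i=0$ and vanishes for $i>0$.

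Next I would run an induction on the number of equivariant cells in the $T^d$-CW structure of $X$. For a single orbit-cell $T^d/H\times D^n$ with $H$ finite, the Borel construction is $ET^d\times_{T^d}(T^d/H\times D^n)\cong ET^d/H\times D^n$, which is rationally equivalent to a point, matching the rational cohomology of the quotient $D^n$. For the inductive step, write $X=Y\cup_\phi(T^d\times_H D^n)$ where the result is known for $Y$ and for the attached cell (both as a $T^d$-space and via its boundary $T^d\times_H S^{n-1}$). Applying the Mayer--Vietoris sequences for $H^*_{T^d}(-;\mathbb{Q})$ and for $H^*(-/T^d;\mathbb{Q})$, the natural transformation induced by $\pi$ gives a map between long exact sequences, and the five lemma promotes the isomorphism to all of $X$.

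An alternative, and perhaps more conceptual, route is through the Leray spectral sequence
\[
E_2^{p,q}=H^p(X/T^d;\mathcal{H}^q(\pi;\mathbb{Q}))\;\Longrightarrow\;H^{p+q}_{T^d}(X;\mathbb{Q}),
\]
where $\mathcal{H}^q(\pi;\mathbb{Q})$ is the sheaf on $X/T^d$ with stalks $H^q(\pi^{-1}([x]);\mathbb{Q})=H^q(BT_x;\mathbb{Q})$. By the fiber computation above, this sheaf is the constant sheaf $\mathbb{Q}$ for $q=0$ and is zero for $q>0$; the spectral sequence therefore collapses onto its bottom row, yielding the desired isomorphism.

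The main obstacle is purely technical: to be entitled to these sheaf- or Mayer--Vietoris-based arguments one must verify that $X/T^d$ is paracompact Hausdorff and that $\pi$ is sufficiently well-behaved, which is where the finite $T^d$-CW-complex hypothesis enters. Once this is in place, the geometric content reduces to the single fact that the rational cohomology of $BH$ is trivial for any finite group $H$.
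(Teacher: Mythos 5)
Your argument is correct, and it rests on exactly the same geometric fact as the paper's proof: $BH$ is rationally acyclic for every finite group $H$, so the ``fibers'' of $X_{T^d}\to X/T^d$ contribute nothing rationally. The difference is the inductive scaffolding. The paper inducts on the number of \emph{orbit types}: it picks a maximal isotropy group $H$, takes an invariant neighborhood $U$ of $X^H$ equivariantly homotopy equivalent to $X^H$, and runs Mayer--Vietoris with $X-X^H$ and $U$. You instead induct on the number of \emph{equivariant cells}, comparing the Borel construction and the quotient cell by cell via the five lemma; this is arguably more elementary, since it uses only the given $T^d$-CW structure and the computation $ET^d\times_{T^d}(T^d/H\times D^n)\cong BH\times D^n$, and it avoids having to produce the neighborhood $U$. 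One small point of hygiene: Mayer--Vietoris wants open sets, so in the attaching step you should either thicken the attaching region or phrase the comparison through the long exact sequences of the pairs $(X^{(n)},X^{(n-1)})$ together with excision --- routine for CW attachments, but worth saying. Your alternative via the Leray spectral sequence of $\pi$ is the most conceptual route and generalizes beyond CW complexes, at the price of the point-set verifications (paracompactness of $X/T^d$, identification of the stalks of $\mathcal{H}^q(\pi;\mathbb{Q})$ with $H^q(BT_x;\mathbb{Q})$) that you correctly flag; for a finite $T^d$-CW complex these are all available.
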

\begin{proof}

  We prove this by induction on the number of orbit types in \(X\).
  First assume that there is only one orbit type with isotropy group \(H\).
  Then the map \(X_{T^d}\rightarrow X/T^d\) is a fibration with fiber \(BH\).
  Since \(H\) is finite \(BH\) is rationally acyclic. Hence the claim follows in this case.

  Next assume that \(X\) has \(k\) different orbit types and the claim is proven for all \(T^d\)-CW complexes with at most \(k-1\) orbit types.
  Note that the isotropy subgroups of \(T^d\) are partially ordered by inclusion.
  Let \(H\subset T^d\) be an isotropy subgroup which is maximal with respect to this partial ordering and \(U\subset X\) an open invariant neighborhood of \(X^H\) which is equivariantly homotopy equivalent to \(X^H\).
  Then \(X-X^H\) and \((X-X^H)\cap U\) have at most \((k-1)\) orbit types and \(U\) is equivariantly homotopy equivalent to a \(T^d\)-CW-complex with one orbit type.
  Hence the claim follows from the 5-lemma and an inspection of Mayer-Vietoris sequences.

  This proves the lemma.
\end{proof}

There are several spectral sequences related to the equivariant cohomology of \(X\).
First there is the Serre spectral sequence  for the Borel fibration \(X\rightarrow X_{T^d}\rightarrow BT^d\)  (see \cite[Theorem 5.2]{MR1793722}).
Since the structure group of the bundle \(X_{T^d}\rightarrow BT^d\) is connected, the universal coefficient theorem implies that the \(E_2\)-page of this spectral sequence is given by
\[H^*(X;R)\otimes_RH^*(BT^d;R).\]
Moreover, the spectral sequence converges to \(H^*_{T^d}(X;R)\).
If \(H^*(X;R)\) is concentrated in even degrees this spectral sequence degenerates at the \(E_2\)-page. Hence, an application of \cite[Proposition 3.1.18]{zbMATH03988265} yields the following.

\begin{lemma}
\label{sec:equiv-cohom-local}
  Let \(X\) be a finite \(T^d\)-CW-complex.
  Assume that \(H^*(X;R)\) is a free \(R\)-module concentrated in even degrees.
  Then \(H^*_{T^d}(X;R)\) is  a free \(H^*(BT^d;R)\)-module concentrated in even degrees.
\end{lemma}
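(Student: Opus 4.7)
The plan is to exploit the Serre spectral sequence of the Borel fibration $X \to X_{T^d} \to BT^d$ that was just recalled above the statement. Since the structure group is connected and $H^*(X;R)$ is a free $R$-module, the universal coefficient theorem identifies the $E_2$-page with
\[
E_2^{p,q} = H^p(BT^d;R) \otimes_R H^q(X;R),
\]
and this spectral sequence converges as an $H^*(BT^d;R)$-algebra to $H^*_{T^d}(X;R)$.

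The first observation is that every generator of $H^*(BT^d;R) = R[t_1,\dots,t_d]$ is in degree $2$, and by hypothesis $H^*(X;R)$ is also concentrated in even degrees; therefore $E_2^{p,q} = 0$ whenever $p+q$ is odd, and every $E_2^{p,q}$ is a free $R$-module. The differential $d_r$ has bidegree $(r, -r+1)$, hence it raises the total degree by one; applied to a class in an even slot it would land in an odd slot, which is zero. Consequently $d_r = 0$ for all $r \geq 2$, the spectral sequence collapses at $E_2$, and $E_\infty^{*,*}$ agrees with the tensor product $H^*(BT^d;R) \otimes_R H^*(X;R)$ as a bigraded $H^*(BT^d;R)$-module; in particular it is a free $H^*(BT^d;R)$-module concentrated in even total degrees.

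It remains to pass from the associated graded to the module itself. The collapse shows that $H^*_{T^d}(X;R)$ admits a filtration by $H^*(BT^d;R)$-submodules whose associated graded is $E_\infty^{*,*}$, and each filtration quotient lives in even degrees, so $H^*_{T^d}(X;R)$ is itself concentrated in even degrees. For the freeness conclusion one invokes \cite[Proposition 3.1.18]{zbMATH03988265}, which lifts freeness of the associated graded to freeness of the filtered module under exactly this kind of hypothesis; alternatively one may choose a homogeneous $R$-basis of $E_\infty^{0,*} \cong H^*(X;R)$, lift each basis element to $H^*_{T^d}(X;R)$, and check inductively along the polynomial filtration of $H^*(BT^d;R)$ that these lifts form a free basis.

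The main point of care is the last step: over a polynomial ring, freeness of the associated graded of a filtered module does not automatically imply freeness of the module in general, so one really does need the compatibility between the Serre filtration and the $H^*(BT^d;R)$-module structure that is packaged in the cited proposition. Everything else is a purely formal degree-parity argument, and the entire proof is essentially two sentences once the spectral sequence is set up.
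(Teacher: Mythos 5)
Your proof is correct and follows exactly the route the paper takes: the parity collapse of the Serre spectral sequence of the Borel fibration at $E_2$, followed by the appeal to \cite[Proposition 3.1.18]{zbMATH03988265} to pass from freeness of the associated graded to freeness of $H^*_{T^d}(X;R)$ itself. Your added caveat about why the last step genuinely requires that proposition (rather than being automatic) is a sensible point of care, but the argument is the same as the paper's.
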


\begin{definition}
If $X$ is an orientable, closed, connected $n$-manifold, or more generally, if there is an $n \geq 0$ such that $H^i(X;R)=0$ for $i>n$ and $H^n(X;R)=R$, then, from the above Serre spectral sequence $E^{*, *}_*$, we get a homomorphism
\begin{align*}
  \pi_!:H^*_{T^d}(X,R)\rightarrow E_\infty^{n,*-n}\hookrightarrow E_2^{n,*-n}&=H^n(X;R)\otimes_RH^{*-n}(BT^d;R)\\ &\cong H^{*-n}(BT^d;R).
\end{align*}

The map \(\pi_!\) is called the {\bf integration over the fiber} (see \cite[Section 8]{zbMATH03159124}).
If \(X\) is a \(T^d\)-manifold we also write \(x[X]\) for \(\pi_!(x)\), \(x\in H^*_{T^d}(X;R)\).
\end{definition}

 The structure group of the \(T^d\)-bundle \(ET^d\times X\rightarrow X_{T^d}\) is connected.
Hence, the Serre spectral sequence for the fibration \(T^d\rightarrow ET^d\times X\rightarrow X_{T^d}\) has \(E_2\)-page
\[H^*_{T^d}(X;R)\otimes_RH^*(T^d;R)\]
and converges to \(H^*(X;R)\cong H^*(X\times ET^d;R)\).
If the \(T^d\)-action on \(X\) is almost free we therefore get from Lemma \ref{sec:equiv-cohom-local-4} the following lemma.

\begin{lemma}
\label{sec:equiv-cohom-local-1}
  Let \(X\) be a finite almost free \(T^d\)-CW-complex.
  Then there is a spectral sequence with \(E_2\)-page
  \[H^*(X/T^d;\mathbb{Q})\otimes_{\mathbb{Q}}H^*(T^d;\mathbb{Q})\]
  which converges to \(H^*(X;\mathbb{Q})\).
\end{lemma}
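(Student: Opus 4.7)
The statement is essentially a direct combination of two ingredients already set up in the preceding paragraphs, so my plan is short and mostly bookkeeping.

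The plan is to feed the almost-free hypothesis into the Serre spectral sequence that the paper has just described for the fibration $T^d \to ET^d \times X \to X_{T^d}$. That spectral sequence, viewed with $\mathbb{Q}$-coefficients, has $E_2$-page
\[
H^*_{T^d}(X;\mathbb{Q}) \otimes_{\mathbb{Q}} H^*(T^d;\mathbb{Q})
\]
and converges to $H^*(ET^d \times X;\mathbb{Q})$. Since $ET^d$ is contractible, the projection $ET^d \times X \to X$ is a homotopy equivalence, so the abutment is canonically $H^*(X;\mathbb{Q})$. This takes care of the convergence statement.

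Next I would use the almost-free hypothesis to rewrite the first tensor factor. By Lemma \ref{sec:equiv-cohom-local-4}, the map $X_{T^d} \to X/T^d$ induces an isomorphism $H^*_{T^d}(X;\mathbb{Q}) \cong H^*(X/T^d;\mathbb{Q})$, because rationally the classifying spaces of the (necessarily finite) isotropy groups are acyclic. Substituting this isomorphism into the $E_2$-page written above produces exactly
\[
H^*(X/T^d;\mathbb{Q}) \otimes_{\mathbb{Q}} H^*(T^d;\mathbb{Q}),
\]
which is what the lemma claims.

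There is essentially no obstacle here; the only small point to be careful about is that rational coefficients are genuinely needed for the identification $H^*_{T^d}(X;\mathbb{Q}) \cong H^*(X/T^d;\mathbb{Q})$ to hold (so that finite isotropy groups become invisible), whereas the Serre spectral sequence itself would exist over $\mathbb{Z}$. One should also remark, as the paper does immediately before, that the structure group of the principal $T^d$-bundle $ET^d \times X \to X_{T^d}$ is connected, which is what guarantees that the $E_2$-page is an untwisted tensor product rather than cohomology with local coefficients. With these two observations the proof is a one-line assembly of the cited ingredients.
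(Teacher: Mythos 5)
Your proposal is correct and is essentially identical to the paper's own argument: the paper also takes the Serre spectral sequence of the fibration $T^d\to ET^d\times X\to X_{T^d}$, uses the connectedness of the structure group to identify the $E_2$-page with the untwisted tensor product $H^*_{T^d}(X;\mathbb{Q})\otimes_{\mathbb{Q}}H^*(T^d;\mathbb{Q})$ converging to $H^*(X;\mathbb{Q})$, and then substitutes the isomorphism $H^*_{T^d}(X;\mathbb{Q})\cong H^*(X/T^d;\mathbb{Q})$ from Lemma~\ref{sec:equiv-cohom-local-4}. No gaps.
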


We now turn to localization techniques in equivariant cohomology.
We first have to fix some notation.

\begin{definition}
  Let \(S\) be a multiplicatively closed subset of homogeneous elements in \( H^*(BT^d;R)\). We then define
  \(\mathfrak{H}(S)\) to be the set of those closed subgroups \(H\) of \(T^d\) which satisfy
  \[S\cap \ker(H^*(BT^d;R)\rightarrow H^*(BH;R))=\emptyset.\]

  Moreover, we define \[X_S=\{x\in X;\;T_x^d\in \mathfrak{H}(S)\}.\]
Here \(T_x^d\subset T^d\) denotes the isotropy subgroup of \(x\in X\).
\end{definition}

With this notation we have the localization theorem.

\begin{theorem}[{\cite[Theorem 3.3.8]{zbMATH03988265}}]
\label{sec:equiv-cohom-local-2}
  Let \(X\) be a finite \(T^d\)-CW-complex and \(S\) a multiplicatively closed subset of  homogeneous elements in \(H^*(BT^d;R)\).
  Then the inclusion \(X_S\rightarrow X\) induces an isomorphism
  \[S^{-1}H^*_{T^d}(X;R)\cong S^{-1}H^*_{T^d}(X_S;R).\]
  Here for an \(H^*(BT^d)\)-module \(M\), \(S^{-1}M\) denotes the localization of \(M\) with respect to \(S\).
\end{theorem}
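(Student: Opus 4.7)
The plan is to follow the classical Borel–Hsiang localization argument, which proceeds by equivariant CW induction with exactness of localization as the main engine. The crucial input is a computation on a single orbit: for $X = T^d/H$ the Borel fibration collapses to $BH$, giving $H^*_{T^d}(T^d/H;R)\cong H^*(BH;R)$ as $H^*(BT^d;R)$-algebras via the restriction map $H^*(BT^d;R)\to H^*(BH;R)$. Therefore the behaviour of $S^{-1}H^*_{T^d}(-)$ on an orbit is dictated entirely by whether $H\in\mathfrak{H}(S)$.

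First I would dispose of the base case $X=T^d/H$. If $H\in\mathfrak{H}(S)$ then $X_S=X$ and the statement is trivial. If $H\notin\mathfrak{H}(S)$, then by definition there is some $s\in S$ whose image in $H^*(BH;R)$ vanishes; hence multiplication by $s$ annihilates $H^*_{T^d}(X;R)=H^*(BH;R)$, so after localization $S^{-1}H^*_{T^d}(X;R)=0$. Since also $X_S=\emptyset$ in this case, both sides of the claimed isomorphism vanish.

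For the general case I would induct on the number of equivariant cells in a finite $T^d$-CW structure for $X$. Let $X'\subset X$ be obtained by removing one top-dimensional equivariant cell of orbit type $T^d/H$. A key observation is that $X_S$ is itself a $T^d$-CW subcomplex of $X$, since membership of a cell $T^d/H\times D^n$ in $X_S$ depends only on its orbit type $H$; in particular the inclusion $X'_S\hookrightarrow X_S$ is again obtained by removing the corresponding top cell (if it lay in $X_S$ at all). The long exact sequences in equivariant cohomology for the pairs $(X,X')$ and $(X_S,X'_S)$, after applying the exact functor $S^{-1}(-)$ on $H^*(BT^d;R)$-modules, fit into a ladder. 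The inductive hypothesis handles $X'$, the base case handles the single cell $X/X'$, and the five-lemma concludes the inductive step.

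The hardest point, which I expect to be the main obstacle, is the integer-coefficient case: when $R=\mathbb{Z}$ and $H$ is finite and non-trivial, $H^*(BH;\mathbb{Z})$ is pure torsion in positive degrees, so an element $s\in S$ of positive even degree automatically restricts to a torsion class in $H^*(BH;\mathbb{Z})$, and one needs to upgrade ``vanishing of some $s$'' to ``actual annihilation in the localized module.'' The standard way around this is to work degree by degree and to exploit that each homogeneous component of $H^*_{T^d}(X;\mathbb{Z})$ is finitely generated: for every class $c$ one shows that some power of a suitable $s\in S$ kills $c$, which suffices to make $S^{-1}$ of the module vanish. Once this local-to-global bookkeeping is done, the induction carries through uniformly for $R=\mathbb{Q}$ and $R=\mathbb{Z}$.
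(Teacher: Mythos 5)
The paper does not actually prove this statement: it is quoted from Allday--Puppe \cite[Theorem 3.3.8]{zbMATH03988265} and used as a black box, so there is no in-paper argument to compare against. Your proposal reconstructs the standard proof from that reference (equivalently, tom Dieck's): compute $H^*_{T^d}(T^d/H;R)\cong H^*(BH;R)$ with $H^*(BT^d;R)$-module structure given by the restriction map, then induct over equivariant cells using the localized long exact sequences of the pairs $(X,X')$ and $(X_S,X'_S)$ together with exactness of localization and the five lemma. This is correct and complete in outline; the one small point you should make explicit is why $X_S$ is a \emph{closed} subcomplex rather than just a union of open cells, namely that $\mathfrak{H}(S)$ is closed under passing to larger subgroups (so the boundary of a cell lying in $X_S$ again lies in $X_S$) --- a fact the paper itself records in the proof of Lemma~\ref{sec:equiv-cohom-local-3}. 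On the other hand, the ``hardest point'' you flag in your last paragraph is not actually an obstacle: by the definition of $\mathfrak{H}(S)$ used here, $H\notin\mathfrak{H}(S)$ means that some $s\in S$ lies in the kernel of $H^*(BT^d;R)\to H^*(BH;R)$, i.e.\ restricts to zero on the nose and not merely to a torsion class. Since the module structure on $H^*_{T^d}(T^d/H;R)\cong H^*(BH;R)$ is precisely through this restriction map, that single $s$ annihilates the entire module, so its localization vanishes with no degree-by-degree or power-of-$s$ bookkeeping; the scenario where every $s\in S$ restricts to a nonzero torsion class is exactly the case $H\in\mathfrak{H}(S)$, in which the orbit lies in $X_S$ and nothing needs to be killed.
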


To apply this theorem we need to know what \(X_S\) is for special choices of \(S\).
Such knowledge is provided by the following lemma.

\begin{lemma}
  \label{sec:equiv-cohom-local-3}
  \begin{enumerate}
  \item Let \[S_0=\{\prod_i s_i\in H^*(BT^d;R);\; s_i\in H^2(BT^d;R)\setminus \{0\}\}.\]
    Then \(\mathfrak{H}(S_0)=\{T^d\}\). In particular, \(X_{S_0}=X^{T^d}\).
  \item  Let \(p\in \mathbb{Z}\) be a prime and \[S_p=\{\prod_i s_i\in H^*(BT^d;\mathbb{Z});\; s_i\in H^2(BT^d;\mathbb{Z})\setminus \{0\}, p \not| s_i\}.\]
    Then \(\mathfrak{H}(S_p)=\{H\subset T^d;\mathbb{Z}_p^d \subset H\}\), where \(\mathbb{Z}_p^d\subset T^d\) is the subgroup of all elements of order \(p\). In particular, \(X_{S_p}=X^{\mathbb{Z}_p^d}\).
  \end{enumerate}
\end{lemma}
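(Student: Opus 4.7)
The plan is to translate both statements into character-lattice data, using the identification $H^2(BT^d;\mathbb{Z}) \cong \Hom(T^d,S^1) \cong \mathbb{Z}^d$ and the fact that $H^*(BT^d;R) \cong R[t_1,\ldots,t_d]$ is an integral domain. Under this identification the restriction $H^2(BT^d;\mathbb{Z}) \to H^2(BH;\mathbb{Z})$ is $\chi \mapsto \chi|_H$, and a character vanishes in $H^2(BH;\mathbb{Z})$ iff $\chi|_H$ is the trivial character, since $\Hom(H,S^1)$ is discrete for $H$ compact. Throughout, write $H = H_0 \times F$ with $H_0 \cong T^k$ and $F$ finite abelian (the sequence splits because $H_0$ is divisible).

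For part (1), I would first check that $T^d \in \mathfrak{H}(S_0)$: the relevant kernel is zero, and no product of nonzero $H^2$-elements can vanish in a polynomial ring. Conversely, for $H \subsetneq T^d$ one has $\dim H < d$, so the annihilator $\mathrm{Ann}(H) \subset \mathbb{Z}^d$ has rank $d - \dim H > 0$. Any nonzero $\chi \in \mathrm{Ann}(H)$ then lies in both $S_0$ (as a single-factor product) and in the kernel; the case $R = \mathbb{Q}$ follows by tensoring with $\mathbb{Q}$.

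For part (2), I would split on whether $\mathbb{Z}_p^d \subseteq H$. In the positive case, given any $s = \prod s_i \in S_p$, reducing modulo $p$ I note each $\bar s_i$ is nonzero in $H^*(BT^d;\mathbb{F}_p) = \mathbb{F}_p[t_1,\ldots,t_d]$ (because $p \nmid s_i$), so the product $\bar s$ is nonzero as well. The composition $H^*(BT^d;\mathbb{F}_p) \to H^*(BH;\mathbb{F}_p) \to H^*(B\mathbb{Z}_p^d;\mathbb{F}_p)$ sends the $t_i$ to the Bocksteins of the standard degree-$1$ generators and is therefore injective on the polynomial subring; so $\bar s$ survives in $H^*(BH;\mathbb{F}_p)$, and in particular $s \notin \ker$ over $\mathbb{Z}$. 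In the negative case, pick $g \in \mathbb{Z}_p^d \setminus H$; its image in $T^d/H$ is nontrivial of order $p$, so some character of $T^d/H$ is nontrivial on it, pulling back to $\chi : T^d \to S^1$ with $\chi|_H = 1$ and $\chi(g) \neq 1$. Because any $\chi \in p\mathbb{Z}^d$ would automatically vanish on $\mathbb{Z}_p^d$, this forces $p \nmid \chi$, so $\chi \in S_p$ lies in the kernel and $H \notin \mathfrak{H}(S_p)$.

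The final ``In particular'' assertions are then immediate from the definition $X_S = \{x : T^d_x \in \mathfrak{H}(S)\}$ combined with the descriptions of $\mathfrak{H}(S_0)$ and $\mathfrak{H}(S_p)$ just proved. The main technical point I expect to need explicit justification is the injectivity of $H^*(BT^d;\mathbb{F}_p) \to H^*(B\mathbb{Z}_p^d;\mathbb{F}_p)$ on the polynomial subring, which is classical but not entirely transparent; everything else is bookkeeping with Pontryagin duality and with characters of compact abelian groups.
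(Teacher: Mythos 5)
Your proof is correct, but for the two ``exclusion'' directions it takes a genuinely different route from the paper. For part (1), and for the case \(\mathbb{Z}_p^d\not\subseteq H\) in part (2), the paper does not invoke Pontryagin duality directly: it embeds \(H\) into an explicit intermediate group \(K=T^{d-1}\times\mathbb{Z}_m\) (with \(p\nmid m\) in the second case, which costs a short structure-theory argument writing \(H\cong T^{d'}\times G_1\times G_2\) and comparing the \(p\)-rank of \(G_1\) with \(d-d'\)) and then exhibits \(m\cdot t\) as an element of \(S\cap\ker\) via the K\"unneth description of \(H^2(BK;R)\). Your replacement --- a nonzero character annihilating \(H\), respectively a character trivial on \(H\) but nontrivial on some \(g\in\mathbb{Z}_p^d\setminus H\), which is automatically not divisible by \(p\) because \(g^p=1\) --- is shorter and bypasses the structure theorem; what it relies on instead is the natural identification \(H^2(BH;\mathbb{Z})\cong\Hom(H,S^1)\) for compact abelian Lie groups \(H\), which you should state explicitly since it carries the whole weight of your argument. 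For the inclusion of \(\{H:\mathbb{Z}_p^d\subseteq H\}\) into \(\mathfrak{H}(S_p)\) the two proofs agree in substance: both reduce to \(\mathbb{Z}_p^d\) and use that the image of \(H^*(BT^d)\) in \(H^*(B\mathbb{Z}_p^d;\mathbb{Z}/p)\) lies in the polynomial subring generated by the Bocksteins, which is an integral domain. Your version, reducing mod \(p\) at the outset, is slightly cleaner than the paper's, which works integrally, injects \(H^2(B\mathbb{Z}_p^d;\mathbb{Z})\) into mod-\(p\) cohomology via the Bockstein sequence, and then needs a unique-factorization argument to handle alternative factorizations of the product --- a step you sidestep by showing directly that each factor, and hence the product, survives.
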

\begin{proof}
  We first prove (1). Let \(H\subset T^d\) be a proper closed subgroup.
  Then there is a splitting \(T^d=T^{d-1}\times S^1\) such that \(H\subset K=T^{d-1}\times \mathbb{Z}_m\) with some cyclic subgroup \(\mathbb{Z}_m\) of order \(m\) of \(S^1\).

  From the K\"unneth theorem we have
  \[H^2(BT^{d};R)\cong H^2(BT^{d-1};R)\oplus H^2(BS^1;R)=H^2(BT^{d-1};R)\oplus t\cdot R\]
  with \(t\in H^2(BS^1;R)\) a generator.
Similarly we have
\[H^2(BK;R)\cong H^2(BT^{d-1};R)\oplus H^2(B\mathbb{Z}_m;R)=H^2(BT^{d-1};R)\oplus t_m\cdot R/mR,\]
where we write \(t_m\) for the image of \(t\) in \(H^2(B\mathbb{Z}_m;R)\).
Hence \(m\cdot t\in S_0\) restricts to zero in \(H^*(BK;R)\) and therefore also in \(H^*(BH;R)\).
Therefore \(H\) does not belong to \(\mathfrak{H}(S_0)\).
Since \(T^d\) clearly belongs to \(\mathfrak{H}(S_0)\), claim (1)  follows.

Next, we prove (2). Again, let \(H\) be a closed subgroup of \(T^d\).
If \(H\) is contained in some \(K=T^{d-1}\times \mathbb{Z}_m\) with \(p\not |m\),
then we can argue as in case (1) to see that \(H\) does not belong to \(\mathfrak{H}(S_p)\).

As a first step, we show that we can find such a \(K\) if \(H\) does not contain \(\mathbb{Z}_p^d\). So \(H\) does not belong to \(\mathfrak{H}(S_p)\) in this case.

As an abstract group, \(H\) is isomorphic to a product \(T^{d'}\times G_1\times G_2\), where
\begin{itemize}
\item \(T^{d'}\) is an \(d'\)-dimensional torus.
\item \(G_1=\prod_{i=1}^l \mathbb{Z}_{p^{k_i}}\) with some \(l\geq 0\), \(k_i\geq 1\).
\item \(G_2\) is a finite abelian group whose order is not a multiple of \(p\).
\end{itemize}

Note that \(T^{d'}\times G_1\) is contained in a subtorus of dimension at most \(d'+l\) in \(T^d\).
So if \(l<d-d'\) then \(T^{d'}\times G_1\subset T^{d-1}\) for some codimension one subtorus \(T^{d-1}\) of \(T^d\).
So in this case \(H\subset K=\langle T^{d-1},G_2\rangle\) where \(K\) is as above.
But if \(l\geq d-d'\) then \(H\) contains \(\mathbb{Z}_p^d\).

As a second step, we show that \(H\in \mathfrak{H}(S_p)\) if \(H\) contains \(\mathbb{Z}_p^d\).
  By definition, the set \(\mathfrak{H}(S_p)\) has the property that if \(K\subset K'\subset T^d\) are closed subgroups of \(T^d\) and \(K\in\mathfrak{H}(S_p)\) then \(K'\) is also in \(\mathfrak{H}(S_p)\).
So it only remains to show that \(\mathbb{Z}_p^d\in \mathfrak{H}(S_p)\).
For this note that we have isomorphisms
\[H^*(BT^d;\mathbb{Z}) \cong \mathbb{Z}[t_1,\dots,t_d]\]
and
\[H^2(B\mathbb{Z}_p^d;\mathbb{Z}) \cong t_{1p}\cdot \mathbb{Z}/p\mathbb{Z}\oplus\dots\oplus t_{dp}\cdot \mathbb{Z}/p\mathbb{Z}\]
where \(t_{ip}\) is the image of \(t_i\).
Moreover, from the Bockstein sequence one sees that there is an injection
\[H^2(B\mathbb{Z}_p^d;\mathbb{Z})\hookrightarrow H^2(B\mathbb{Z}_p^d;\mathbb{Z}/p\mathbb{Z})\subset  H^*(B\mathbb{Z}_p^d;\mathbb{Z}/p\mathbb{Z}).\]
If \(p>2\), then we have
\[ H^*(B\mathbb{Z}_p^d;\mathbb{Z}/p\mathbb{Z})=\mathbb{Z}/p\mathbb{Z}[t_{1p},\dots,t_{dp}]\otimes_{\mathbb{Z}/p\mathbb{Z}} \Lambda_p(s_1,\dots,s_d),\]
where \(\Lambda_p(s_1,\dots,s_d)\) denotes the free exterior algebra over \(\mathbb{Z}/p\mathbb{Z}\) on \(d\) generators of degree one.

If \(p=2\), then
\[ H^*(B\mathbb{Z}_p^d;\mathbb{Z}/p\mathbb{Z})=\mathbb{Z}/p\mathbb{Z}[s_1,\dots,s_d],\]
with \(s_i\) of degree one such that \(s_i^2=t_{ip}\) for all \(i=1,\dots,d\).

So if \(a_i=\sum_{j=1}^d\alpha_{ij} t_j\in H^2(BT^d;\mathbb{Z})\), \(\alpha_{ij}\in \mathbb{Z}\), \(i=1,\dots,k\), are such that \(\prod_{i=1}^k a_i\) restricts to zero in \(H^*(B\mathbb{Z}_p^d;\mathbb{Z})\), then this product also restricts to zero in \(H^*(B\mathbb{Z}_p^d;\mathbb{Z}_p)\).
Hence by the above remarks on the structure of this ring, it follows that there must be an \(i_0\in\{1,\dots,k\}\) such that \(a_{i_0}\) restricts to zero in \(H^2(B\mathbb{Z}_p^d;\mathbb{Z}_p)\).
But then each \(\alpha_{i_0j}\) must be divisible by \(p\).
So \(a_{i_0}\) does not belong to \(S_p\).
Since \(H^*(BT^d;\mathbb{Z})\) is a unique factorization domain, it follows that \(\prod_{i=1}^k a_i\not\in S_p\).

So \(\mathbb{Z}^d_p\in \mathfrak{H}(S_p)\) follows.
\end{proof}

\subsection{GKM manifolds}
\label{sec:gkm_manifolds}

We also need to recall some facts about GKM$_k$-manifold.
 GKM manifolds were first defined by Goresky, Kottwitz and MacPherson \cite{MR1489894}.
 The notion of a GKM$_k$ manifold was first defined in \cite{MR3456711}.
 Here we need a slightly more general definition.

\begin{definition}
  \label{sec:gkm-manifolds-6}
  Let \(M^{2n}\) be an orientable, closed manifold with \(T^d\)-action and \(2\leq k\leq d\).
We say that the torus action on \(M\) is GKM$_k$ if the following two conditions are satisfied:

  \begin{enumerate}
  \item For any subtorus \(T'\subset T^d\) and each component \(F\) of \(M^{T'}\) we have \(F\cap M^{T^d}\neq \emptyset\).
  \item For every point \(p\in M^{T^d}\) the following holds: Any \(k\) weights of the \(T^d\)-representation \(T_pM\) are linearly independent.
\end{enumerate}
\end{definition}

\begin{remark}
  \begin{enumerate}
  \item Note that if the action is GKM$_k$ for some \(k\geq 2\), then it is GKM$_{k'}$ for all \(2\leq k'\leq k\).
  \item Our definition of GKM$_k$ is more general than the usual definition (see for example \cite{MR3456711}) since it does not require that the action is equivariantly formal.
    \item If, in the situation of Definition~\ref{sec:gkm-manifolds-6}, there is a \(T\)-invariant metric of positive sectional curvature on \(M\), then it follows from a result of Berger \cite{MR133083} that our condition (1) is automatically satisfied.
  \end{enumerate}
\end{remark}

\begin{lemma}
\label{sec:gkm-manifolds-5}
  Let \(M^{2n}\) be orientable manifold with \(T^d\)-action.
  Assume that for any subtorus \(T'\subset T^d\) and each component \(F\) of \(M^{T'}\) we have \(F\cap M^{T^d}\neq \emptyset\).
  Then for \(2\leq k\leq d\) the following two statements are equivalent:
  \begin{enumerate}
  \item For every codimension-\((k-1)\) subtorus \(T'\) of \(T^d\), we have \(\dim M^{T'}\leq 2(k-1)\).
  \item For every point \(p\in M^{T^d}\) the following holds: Any \(k\) weights of the \(T^d\)-representation \(T_pM\) are linearly independent, i.e. the action is GKM$_k$.
  \end{enumerate}
\end{lemma}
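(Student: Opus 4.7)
The plan is to reduce everything to a linear-algebra computation on the tangent representation at a single $T^d$-fixed point, using the hypothesis that every component of $M^{T'}$ meets $M^{T^d}$. For any $p\in M^{T^d}$ the tangent space decomposes as $T_pM=\bigoplus_{i=1}^m V_{\alpha_i}$, a direct sum of real $2$-dimensional weight spaces (counted with multiplicity) with weights $\alpha_i\in(\mathfrak{t}^d)^*$. For any closed subtorus $T'\subset T^d$ one has the standard equality
\[T_pM^{T'}=(T_pM)^{T'}=\bigoplus_{\alpha_i|_{\mathfrak{t}'}=0}V_{\alpha_i},\]
so that the dimension of the component of $M^{T'}$ through $p$ equals twice the number of weights at $p$ that vanish on $\mathfrak{t}'$. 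All later arguments flow from this identity.

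For the implication $(2)\Rightarrow(1)$, I would fix a codimension-$(k-1)$ subtorus $T'$ and a component $F$ of $M^{T'}$. The hypothesis that $F\cap M^{T^d}\neq\emptyset$ lets me pick a $p\in F\cap M^{T^d}$, and $\dim F$ then equals twice the number of weights at $p$ annihilated by $\mathfrak{t}'$. Such weights lie in the annihilator $\operatorname{ann}(\mathfrak{t}')\subset(\mathfrak{t}^d)^*$, which has dimension $k-1$. Condition (2) says any $k$ weights at $p$ are linearly independent, so $\operatorname{ann}(\mathfrak{t}')$ contains at most $k-1$ of the weights (with multiplicity). Hence $\dim F\leq 2(k-1)$, which gives (1).

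For $(1)\Rightarrow(2)$, I would argue by contrapositive. Assume that at some $p\in M^{T^d}$ there are weights $\alpha_{i_1},\dots,\alpha_{i_k}$ which are linearly dependent. Their span in $(\mathfrak{t}^d)^*$ has some dimension $\ell\leq k-1$, so the common kernel $\bigcap_{j}\ker\alpha_{i_j}$ has codimension $\ell\leq k-1$ in $T^d$. Inside the identity component of this kernel I pick any subtorus $T'\subset T^d$ of codimension exactly $k-1$; then all $k$ chosen weights vanish on $\mathfrak{t}'$, so $T_pM^{T'}$ contains the $2k$-dimensional subspace $\bigoplus_{j=1}^k V_{\alpha_{i_j}}$. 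The component of $M^{T'}$ through $p$ therefore has dimension at least $2k>2(k-1)$, contradicting (1).

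The argument is elementary once the right bookkeeping is in place; the only subtle points are that the weights must be counted with multiplicity (so that (2) for $k\geq 2$ automatically prevents two weight lines from coinciding) and that one can always find a subtorus of codimension exactly $k-1$ inside a subtorus of codimension at most $k-1$ of $T^d$, which is just the standard fact that every subtorus of $T^d$ contains subtori of each smaller dimension.
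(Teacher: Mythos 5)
Your proposal is correct and follows essentially the same argument as the paper: both directions reduce to the decomposition $T_pM^{T'}=(T_pM)^{T'}$ at a $T^d$-fixed point of the relevant component, with $(2)\Rightarrow(1)$ obtained by counting weights in the $(k-1)$-dimensional annihilator of $\mathfrak{t}'$ and $(1)\Rightarrow(2)$ by producing a codimension-$(k-1)$ subtorus inside the common kernel of $k$ dependent weights. No gaps.
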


\begin{proof}
  Let \(M\) and \(k\) be as in the lemma.

  Assume that (1) holds. To show that (2) holds we argue by contradiction.
  Let \(x\in M^{T^d}\) and \(\alpha_1,\dots,\alpha_n\in (LT^d)^*\) be the weights of the \(T^d\)-representation \(T_xM\).
  Here \((LT^d)^*\) denotes the dual Lie algebra of \(T^d\).
  Assume that \(\alpha_1,\dots,\alpha_k\) are linearly dependent.
  Then the identity component of \(\bigcap_{i=1}^k\ker\alpha_i\) contains a codimension \((k-1)\)-subtorus \(T'\subset T^d\).
  Since \(T_x(M^{T'})=(T_xM)^{T'}\) it follows that \(M^{T'}\) has dimension at least \(2k\), a contradiction to (1).

  Next assume that (2) holds.
  We want to show (1).
  Let \(T'\) be a subtorus of codimension \(k-1\).
  Let \(F\) be a component of \(M^{T'}\), \(x\in F^{T^d}\) and \(\alpha_1,\dots,\alpha_n\in (LT^d)^*\) the weights of the \(T^d\)-representation \(T_xM\).
  Let \(V_i\) be the irreducible \(T^d\)-representation of weight \(\alpha_i\).
  Then \(T_xM=\bigoplus_{i=1}^n V_i\) and \[T_xF=\bigoplus_{i;\alpha_i|_{T'}=0} V_i\]
  Since the kernel of the restriction map \((LT^d)^*\rightarrow (LT')^*\) has dimension \(k-1\) and any \(k\) of the \(\alpha_i\) are linearly independent, there are at most \(k-1\) of the \(\alpha_i\) which restrict to zero in \((LT')^*\).
  Because \(\dim V_i=2\) it follows that \(\dim F\leq 2(k-1)\).
  Since this holds for every component of \(M^{T'}\) statement (1) follows.
\end{proof}

One defines the one-skeleton of an action of \(T^d\) on a manifold \(M\) as
\[M_1=\{x\in M;\;\dim Tx\leq 1\}=\bigcup_{T'\subset T^d} M^{T'},\]
where the union is over all codimension-one subtori \(T'\)  in \(T^d\).
Note that if the action is GKM$_k$, \(k\geq 2\), then \(M_1\) is a union of copies of \(S^2\) acted on by \(T^d\) by rotations.
Hence \(M_1/T^d\) has the structure of a graph \(\Gamma_M\).
In this graph the vertices correspond to the fixed points of the action.
Any two of these vertices are connected by as many edges as there are \(T^d\)-invariant \(S^2\)'s containing both of the corresponding fixed points.

One can put labels on the edges of this graph given by the isotropy groups of an orbit in the interior of the edges.
This gives rise to a labeled graph \((\Gamma_M,\alpha)\).
This labeled graph is called the GKM graph of \(M\).

In presence of positive curvature the following has been shown by Goertsches and Wiemeler.

\begin{theorem}[{\cite[Section 5]{MR3456711}}]
\label{sec:gkm-manifolds-3}
  Let \(M\) be a positively curved GKM$_3$ manifold.
  Then each component of the labeled graph \((\Gamma_M,\alpha)\) is isomorphic to a graph obtained from a linear torus action on a CROSS of the same dimension as \(M\).
  Note that the CROSSes are given by \(S^n,\mathbb{C}P^n,\mathbb{H}P^n\) and \(\mathbb{O}P^2\).
\end{theorem}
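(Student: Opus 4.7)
The plan is to analyze the labeled graph $(\Gamma_M,\alpha)$ by studying fixed-point components of small-codimension subtori of $T^d$ and then invoking the classifications of positively curved manifolds with maximal-rank torus symmetry in low dimensions to identify each component.

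First, fix a $T^d$-fixed point $p$ in a chosen component of $(\Gamma_M,\alpha)$---such a point exists by Berger's theorem, as noted in the remark preceding the statement---and let $\alpha_1,\dots,\alpha_n \in (LT^d)^*$ denote the weights of the isotropy representation on $T_pM$. By the GKM$_3$ hypothesis and Lemma~\ref{sec:gkm-manifolds-5}, every codimension-$(k-1)$ subtorus $T'\subset T^d$ satisfies $\dim M^{T'} \le 2(k-1)$ for $k \le 3$. For any pair of distinct weights $\alpha_i,\alpha_j$ at $p$, the identity component of $\ker\alpha_i\cap\ker\alpha_j$ is a codimension-$2$ subtorus $T'_{ij}$, and the component $F_{ij}$ of $M^{T'_{ij}}$ through $p$ is a $4$-dimensional, totally geodesic, positively curved submanifold admitting an effective isometric rank-$2$ torus action. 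By Wilking's connectedness theorem (Theorem~\ref{thm:wilking2}), $F_{ij}$ is simply connected, so Theorem~\ref{thm:hsiang-kleiner} forces $F_{ij}$ to be homeomorphic to $S^4$ or $\mathbb{C}P^2$. Consequently every $2$-face of $(\Gamma_M,\alpha)$ is either a digon (the graph of $S^4$) or a triangle (the graph of $\mathbb{C}P^2$).

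Next I would bootstrap from this $2$-face information. The local structure of $(\Gamma_M,\alpha)$ at $p$ is completely encoded by the weights $\alpha_1,\dots,\alpha_n$ together with the $S^4$-versus-$\mathbb{C}P^2$ decision for each pair. Since the weights at the opposite end of any edge are obtained from those at $p$ by an integer reflection along the edge label, this local datum propagates rigidly through the whole component. To identify the component with a CROSS graph, one compares its combinatorial type with the known GKM graphs of $S^{2n}$, $\mathbb{C}P^n$, $\mathbb{H}P^{n/2}$, and (in dimension $16$) $\mathbb{O}P^2$; this comparison is aided in small dimensions by Grove--Searle (Theorem~\ref{thm:grove_searle}) and Fang--Rong (Theorem~\ref{thm:fang_rong1}), which classify the positively curved manifolds with almost-maximal torus symmetry that could arise as higher-dimensional faces.

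The main obstacle is the global reassembly: even granted that every $2$-face is a digon or triangle, a priori such pieces could conceivably glue into a non-CROSS graph. Ruling this out requires a careful combinatorial analysis of how digons and triangles meet along shared vertices and edges, exploiting the reflection compatibility of the edge labels to establish the required rigidity. Carrying out this analysis is the substance of the argument in Section~5 of \cite{MR3456711}, on which the present statement is based.
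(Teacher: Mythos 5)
Your sketch captures the geometric core of the argument in \cite[Section 5]{MR3456711} (the two-faces are totally geodesic, positively curved $4$-manifolds, classified by Hsiang--Kleiner as $S^4$ or $\mathbb{C}P^2$, i.e. digons or triangles, after which a combinatorial rigidity argument identifies the graph), and like the paper you ultimately defer the combinatorial reassembly to that reference. However, you miss the one point that the paper's own proof is actually about: the results of \cite{MR3456711} are stated under the additional hypothesis that the action is \emph{equivariantly formal}, which is not assumed here. One therefore cannot simply point to Section 5 of that paper; one has to check where equivariant formality enters. The paper's proof consists precisely of this audit: formality was used only (i) to show the GKM graph is connected and (ii) to recover the cohomology ring from the graph, whereas the geometric/combinatorial determination of the graph structure goes through unchanged provided one argues on each component separately --- which is why the theorem is phrased about ``each component'' of $(\Gamma_M,\alpha)$. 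Your proposal does implicitly work component by component, but without making the formality issue explicit the appeal to \cite{MR3456711} is incomplete.

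A secondary technical point: you justify simple connectivity of the $4$-dimensional faces $F_{ij}$ by Theorem~\ref{thm:wilking2}. That theorem gives that $F_{ij}\hookrightarrow M^{2n}$ is $(7-2n+\delta(G))$-connected, which is not even $2$-connected once $2n$ is large relative to the rank (e.g. a GKM$_3$ action of $T^3$ on a $16$-dimensional manifold), so it does not yield $\pi_1(F_{ij})=0$ in the generality of the statement. The correct argument is Synge's theorem: $F_{ij}$ is a closed, even-dimensional, orientable (its normal bundle carries a complex structure coming from the torus action), totally geodesic and hence positively curved submanifold, so it is simply connected, and Theorem~\ref{thm:hsiang-kleiner} then applies as you intend.
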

\begin{proof}
    In \cite{MR3456711} the rational cohomology rings of equivariantly formal, positively curved GKM$_3$-manifolds were determined.
    
    The assumption of equivariant formality entered in the proof in two ways, to see that
    \begin{enumerate}
    \item the GKM graph of \(M\) is connected, and that
    \item the GKM graph of \(M\) determines the cohomology ring of \(M\) (see Theorem~\ref{sec:gkm-manifolds-4} below).
    \end{enumerate}

    The rest of the arguments in \cite{MR3456711} are geometric and combinatorial in nature.
    They were used to determine the GKM graph of a positively curved GKM$_3$-manifold.
    
    When we drop the assumption of equivariant formality, we lose the above two properties.
    But the arguments on the structure of the GKM graph from \cite[Section 5]{MR3456711} remain valid, if one argues for each of its components separately.
    So that essentially the above theorem is proved there. 
\end{proof}

The importance of the GKM graph \(\Gamma_M\) is shown by the following theorem due to Goresky, Kottwitz and MacPherson.

\begin{theorem}[\cite{MR1489894}]
  \label{sec:gkm-manifolds-4}
  Let \(M\) be a connected GKM$_k$ manifold with vanishing odd degree Betti numbers, where \(k\geq 2\).  Then \(\Gamma_M\) is connected and the rational ordinary and equivariant cohomology rings of \(M\) are determined by \((\Gamma_M,\alpha)\).
\end{theorem}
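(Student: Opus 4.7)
The plan is to combine the localization theorem with a Chang--Skjelbred-type argument to identify $H^*_{T^d}(M;\mathbb{Q})$ with a subring of $\bigoplus_{p\in M^{T^d}}H^*(BT^d;\mathbb{Q})$ cut out by compatibility conditions read off from $(\Gamma_M,\alpha)$. The vanishing odd Betti numbers, combined with Lemma~\ref{sec:equiv-cohom-local}, force the Serre spectral sequence of the Borel fibration to degenerate at $E_2$, so $H^*_{T^d}(M;\mathbb{Q})$ is a free $H^*(BT^d;\mathbb{Q})$-module, in particular torsion-free. Invoking Theorem~\ref{sec:equiv-cohom-local-2} with the multiplicative set $S_0$ from Lemma~\ref{sec:equiv-cohom-local-3}(1), one then obtains an injective restriction map
\[ \iota^* \colon H^*_{T^d}(M;\mathbb{Q}) \hookrightarrow H^*_{T^d}(M^{T^d};\mathbb{Q}) = \bigoplus_{p\in M^{T^d}} H^*(BT^d;\mathbb{Q}). \]

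Next, I would localize the image by restricting through the one-skeleton. The GKM$_k$ hypothesis with $k\geq 2$, combined with Lemma~\ref{sec:gkm-manifolds-5}, shows that every component of $M_1$ is either an isolated fixed point or a $T^d$-invariant $2$-sphere on which a corank-one quotient torus $T^d/T'$ acts by rotation. Writing $\alpha_e\in H^2(BT^d;\mathbb{Q})$ for the weight labelling such an edge $e=pq$ in $\Gamma_M$, a direct computation gives $H^*_{T^d}(S^2;\mathbb{Q})=\{(f_p,f_q) : f_p\equiv f_q \pmod{\alpha_e}\}$, and restriction further to the poles forces the image of $\iota^*$ to lie in the graph cohomology
\[ \mathcal{H}(\Gamma_M,\alpha) := \bigl\{(f_p)_p \in \textstyle\bigoplus_p H^*(BT^d;\mathbb{Q}) : \alpha_e \mid f_p-f_q \text{ for every edge } e=pq \text{ of } \Gamma_M\bigr\}. \]

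The main obstacle is the reverse inclusion, i.e.\ showing that $\iota^*$ surjects onto $\mathcal{H}(\Gamma_M,\alpha)$, equivalently that $H^*_{T^d}(M;\mathbb{Q})\to H^*_{T^d}(M_1;\mathbb{Q})$ is surjective. I would prove this via the Chang--Skjelbred lemma by analyzing the filtration of $M$ by orbit-dimension strata $M^{T^d}\subset M_1\subset \cdots \subset M$ and its associated spectral sequence in equivariant cohomology. The relative groups $H^*_{T^d}(M_j,M_{j-1};\mathbb{Q})$ are supported on orbit types whose isotropy subgroups have codimension exactly $j$, so their $H^*(BT^d;\mathbb{Q})$-annihilators have height $j$; equivariant formality from Lemma~\ref{sec:equiv-cohom-local} translates into a purity statement forcing all differentials out of the columns $j\geq 2$ to vanish in the range that affects the edge, yielding the desired surjectivity.

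Finally, connectedness of $\Gamma_M$ drops out in degree zero: $H^0_{T^d}(M;\mathbb{Q}) = H^0(M;\mathbb{Q})=\mathbb{Q}$ since $M$ is connected, whereas $\mathcal{H}(\Gamma_M,\alpha)^0 \cong \mathbb{Q}^{|\pi_0(\Gamma_M)|}$, forcing $|\pi_0(\Gamma_M)|=1$. Having then identified $H^*_{T^d}(M;\mathbb{Q})\cong \mathcal{H}(\Gamma_M,\alpha)$ as graded $H^*(BT^d;\mathbb{Q})$-algebras depending only on the labelled graph $(\Gamma_M,\alpha)$, equivariant formality recovers the ordinary cohomology ring as $H^*(M;\mathbb{Q})\cong H^*_{T^d}(M;\mathbb{Q})/(t_1,\dots,t_d)$, and this too is manifestly determined by $(\Gamma_M,\alpha)$.
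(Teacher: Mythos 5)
Theorem~\ref{sec:gkm-manifolds-4} is quoted in the paper directly from \cite{MR1489894} without proof, and your argument is precisely the standard one behind that result: degeneration of the Borel spectral sequence from vanishing odd Betti numbers, injectivity of restriction to $M^{T^d}$ via localization, identification of the image with the graph cohomology of $(\Gamma_M,\alpha)$ via the Chang--Skjelbred lemma, and recovery of $H^*(M;\mathbb{Q})$ as the quotient by $(t_1,\dots,t_d)$; the degree-zero count giving connectedness of $\Gamma_M$ is also the right observation. The only place the write-up is thinner than the actual proof is the surjectivity step, where the informal ``purity of the strata'' remark should be replaced by the genuine Chang--Skjelbred argument (exactness of $0\to H^*_{T}(M)\to H^*_{T}(M^{T})\to H^{*+1}_{T}(M_1,M^{T})$ for equivariantly formal actions, proved by localizing at the height-one primes of $H^*(BT;\mathbb{Q})$), but this is a standard lemma rather than a gap in the strategy.
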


Similarly to the one-skeleton of a torus action on a manifold \(M\) one can define the two-skeleton of the action as
\[M_2=\{x\in M;\dim Tx \leq 2\}.\]
If the action on \(M\) is GKM$_3$ then \(M_2\) is a union of $4$-dimensional manifolds.

We need the following theorem.

\begin{theorem}
  \label{sec:gkm-manifolds-2}
  Let \(M\) and \(N\) be two GKM$_3$-manifolds of the same dimension such that all fixed point components of codimension-two subtori are simply connected.
  Then the two-skeleta of \(M\) and \(N\) are equivariantly homeomorphic if and only if their GKM graphs are isomorphic.
\end{theorem}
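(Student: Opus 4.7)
The ``only if'' direction is immediate, since any equivariant homeomorphism preserves fixed-point components of all subgroups together with their isotropy data, hence induces an isomorphism of labelled graphs. For the converse, assume $\phi\colon (\Gamma_M,\alpha_M)\to (\Gamma_N,\alpha_N)$ is an isomorphism of labelled GKM graphs. By Lemma~\ref{sec:gkm-manifolds-5}, every fixed-point component of a codimension-two subtorus $T'\subset T^d$ has dimension at most $4$, so $M_2$ decomposes as the union of its one-skeleton $M_1$ with the four-dimensional components of $M^{T'}$ as $T'$ varies; the zero- and two-dimensional components of the various $M^{T'}$ are already part of $M_1$ and hence directly encoded in $\Gamma_M$.

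For each four-dimensional component $F\subset M^{T'}$, the induced action of $T^d/T'\cong T^2$ is effective modulo a finite kernel, and $F$ is a simply connected closed four-manifold by hypothesis. By the Orlik--Raymond classification of effective smooth $T^2$-actions on simply connected closed four-manifolds, the equivariant homeomorphism type of $F$ is determined by its weighted orbit space: a polygonal disk whose boundary vertices correspond to the $T^d$-fixed points of $F$, whose boundary edges correspond to the $T^d$-invariant two-spheres in $F$, and whose edge labels record the primitive isotropy weights of those spheres. This is exactly the labelled subgraph of $(\Gamma_M,\alpha_M)$ spanned by $F$. Consequently $\phi$ matches the weighted orbit space of each four-dimensional component $F\subset M_2$ with that of a component $\phi(F)\subset N_2$, and hence yields an equivariant homeomorphism $F\to \phi(F)$.

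It remains to glue these local equivariant homeomorphisms along $M_1$. The plan is to first fix a compatible family of equivariant homeomorphisms $h_e\colon S^2_e\to S^2_{\phi(e)}$ for each edge of $\Gamma_M$, which is possible because a $T^d$-action on $S^2$ with given labelled graph data is unique up to equivariant homeomorphism and $h_e$ is determined, once an identification of the two poles is chosen, uniquely up to rotation (hence up to equivariant isotopy). One then extends $h_e$ over each four-dimensional component $F$ via the following refinement of Orlik--Raymond: any equivariant homeomorphism prescribed on the preimage of the boundary polygon of the orbit space of $F$ extends to an equivariant homeomorphism of $F$. This is the main technical step; it reduces to the fact that the orbit map $F\to F/T^2\cong D^2$ is a trivial principal $T^2$-bundle over the open orbit disk, so the extension problem becomes a coning construction for a self-homeomorphism of $\partial D^2$ over $D^2$, lifted trivially to $T^2$-equivariant data. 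Performing this extension on every four-dimensional component, using the already-fixed $h_e$ on all edges of $F$ simultaneously, assembles into the desired equivariant homeomorphism $M_2\to N_2$.
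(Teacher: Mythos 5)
Your proposal follows essentially the same route as the paper: decompose $M_2$ into $M_1$ together with the simply connected four-dimensional fixed-point components, invoke the Orlik--Raymond classification to recover each such component from its labelled subgraph, and extend the equivariant homeomorphism first over the invariant two-spheres and then over the four-dimensional pieces. The only point the paper treats more carefully is the final extension step: before coning over the orbit disk one must first equivariantly isotope the given boundary homeomorphism to the product form $A\times \Id_{T^2}$ with $A\in O(2)$, since a general equivariant self-homeomorphism $(x,t)\mapsto (f(x),h(x)t)$ of $S^1\times T^2$ does not cone directly.
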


The proof of this theorem uses the following two lemmas.

\begin{lemma}
  \label{sec:gkm-manifolds}
  Let \(T^d\) act on the $2$-sphere \(M\) by rotations. Then every homeomorphism \(M^T\rightarrow M^T\) extends to an equivariant homeomorphism \(M\rightarrow M\).
\end{lemma}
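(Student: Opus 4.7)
The plan is a straightforward case analysis based on the kernel of the action. Since $T^d$ acts on $M=S^2$ by rotations, the action factors through a homomorphism $T^d\to SO(3)$ whose image is either trivial or a circle subgroup. These two cases are handled separately.

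In the trivial case, $M^T=M$, and the hypothesis already gives a homeomorphism $M\to M$ that is equivariant (every map is equivariant for the trivial action), so nothing needs to be extended. In the non-trivial case, the image circle has a unique axis of rotation in $\mathbb{R}^3$, so $M^T$ consists of exactly the two poles $N,S$. Any self-homeomorphism of the two-point set $\{N,S\}$ is either the identity or the transposition, so I only need to produce equivariant extensions in these two sub-cases.

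The identity on $M^T$ obviously extends by $\mathrm{Id}_M$. For the transposition, I would extend it by the reflection through the equatorial plane (the plane perpendicular to the axis of rotation through the center of $S^2$). This reflection swaps $N$ and $S$, and it commutes with every rotation about the polar axis since the equatorial plane is preserved setwise by such rotations; hence it is equivariant for the $T^d$-action. This yields the required equivariant extension in both sub-cases.

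There is no substantial obstacle here; the only point worth mentioning is that the extension need not be unique (one could compose with any equivariant self-homeomorphism of $M$ fixing $M^T$ pointwise, e.g.\ a rotation of $S^2$ about the polar axis), but the lemma asserts only existence, which is what the construction above provides.
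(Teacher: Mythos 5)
Your proof is correct, but it takes a different route from the paper. You exploit the explicit linear model: the image of $T^d$ in $SO(3)$ is connected and abelian, hence trivial or a circle of rotations about a fixed axis, so $M^T$ is either all of $M$ (nothing to extend) or the two poles, and the only nontrivial homeomorphism of the poles (the swap) extends by the equatorial reflection, which visibly commutes with all rotations about the axis. The paper instead works with the orbit space: it identifies $M$ equivariantly with $([-1,1]\times T^d)/\!\sim$ via a continuous section of the orbit map, views the given homeomorphism of $M^T$ as a homeomorphism of $\partial[-1,1]$, extends it to a homeomorphism $F$ of $[-1,1]$, and takes $F\times\Id_{T^d}$. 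Your argument is more elementary and concrete, and it even covers the degenerate trivial-action case that the paper's orbit-interval description tacitly excludes; its only (minor) dependence is on the phrase ``act by rotations'' meaning the action is equivalent to a linear one, and if it is only equivalent up to equivariant homeomorphism you should conjugate your extension back through that identification. The paper's approach buys uniformity: the same section-plus-quotient technique is reused verbatim in the $4$-dimensional analogue (Lemma~\ref{sec:gkm-manifolds-1}), where no explicit linear model is available, whereas your reflection trick is special to $S^2$.
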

\begin{proof}
  The orbit space of the action on \(M\) is a compact interval \([-1,1]\) with the end points corresponding to the fixed points of the action and interior points corresponding to principal orbits.
    There is a continuous section to the orbit map \(s:[-1,1]\rightarrow M\).
    This section induces an equivariant homeormophism
    \[([-1,1]\times T^d)/\!\!\sim\;\; \rightarrow M\quad\quad(x,t)\mapsto ts(x).\]
  
  Here, for   \((x,t), (x',t')\in [-1,1]\times T^d\) we have
  \((x,t)\sim (x',t')\) if and only if \(x=x'\) and
  \begin{itemize}
  \item \(x\in \partial [-1,1]\) or
  \item \(x\not\in \partial [-1,1]\) and \(t^{-1}t'\in T'\) where \(T'\subset T^d\) is the principal isotropy group of the action on \(M\).
  \end{itemize}
  
   Now, a homeomorphism \(f\) of \(M^T\) is nothing but a homeomorphism of \(\partial [-1,1]\).
   Each such homeomorphism extends to a homeomorphism \(F\) of \([-1,1]\).
   Now \(F\times \id_{T^d}\) induces an extension of \(f\) to an equivariant homeomorphism of \(M\).
\end{proof}

\begin{lemma}
  \label{sec:gkm-manifolds-1}
  Let \(M^4\) be a closed, simply connected manifold with an effective \(T^2\)-action.
  Then every equivariant homeomorphism \(M_1\rightarrow M_1\) extends to an equivariant homeomorphism \(M\rightarrow M\).
\end{lemma}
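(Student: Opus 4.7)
The plan is to exploit the classical Orlik--Raymond structure theory of simply connected closed $4$-manifolds with an effective $T^{2}$-action. This theory provides two key facts: the orbit space $M^{*}=M/T^{2}$ is homeomorphic to a closed $2$-disk, with boundary $\partial M^{*}$ subdivided into arcs labelled by the circle isotropy groups $H_{1},\dots,H_{k}\subset T^{2}$ of the $T^{2}$-invariant $2$-spheres and vertices corresponding to the $T^{2}$-fixed points; and the orbit projection $\pi\colon M\to M^{*}$ admits a continuous global section $s\colon M^{*}\to M$. In analogy with the quotient description used in the proof of Lemma~\ref{sec:gkm-manifolds}, this section induces an equivariant homeomorphism
\[
  M\;\cong\;(M^{*}\times T^{2})/\!\sim,\qquad (x,t)\sim(x,t')\iff t^{-1}t'\in H(x),
\]
where $H(x)\subset T^{2}$ is the isotropy at $s(x)$; so $H(x)=\{e\}$ on $\interior M^{*}$, $H(x)$ equals the labelling circle along each arc, and $H(x)=T^{2}$ at each vertex. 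Under this identification $M_{1}=\pi^{-1}(\partial M^{*})$ is exactly the cycle of $T^{2}$-invariant $2$-spheres glued at the fixed points.

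First I would descend $\phi$ to an isotropy-label-preserving self-homeomorphism $\bar\phi\colon\partial M^{*}\to\partial M^{*}$; since $\partial M^{*}\cong S^{1}$ bounds the disk $M^{*}$, Alexander's trick produces an extension $\bar\Phi\colon M^{*}\to M^{*}$.

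Next I would promote $\bar\Phi$ to an equivariant extension $\Phi\colon M\to M$ of $\phi$. Using the section, $\phi$ may be written on $M_{1}$ as $\phi[x,t]=[\bar\phi(x),\alpha(x)\,t]$, where $\alpha$ is a continuous section over $\partial M^{*}$ of the bundle with fibre $T^{2}/H(x)$. Choosing a continuous lift $\tilde\alpha\colon M^{*}\to T^{2}$ of $\alpha$, I would set
\[
  \Phi[x,t]=[\bar\Phi(x),\tilde\alpha(x)\,t].
\]
This is well defined with respect to $\sim$, is $T^{2}$-equivariant, and restricts by construction to $\phi$ on $M_{1}$.

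The main obstacle is the construction of the lift $\tilde\alpha$. On each arc of $\partial M^{*}$ one can lift $\alpha$ to a map into $T^{2}$ using the contractibility of the arc, but the global winding of these local lifts determines a class in $\pi_{1}(T^{2})=\mathbb{Z}^{2}$ whose vanishing is necessary (and sufficient) for the extension of $\alpha$ across the disk $M^{*}$. Here the hypothesis $\pi_{1}(M)=1$ enters decisively: a nontrivial winding would, via the section $s$ together with a loop in the $T^{2}$-orbit through a boundary point, yield a nontrivial element of $\pi_{1}(M)$, contradicting simple connectivity. Hence the winding class must be zero, the lift $\tilde\alpha$ exists, and the construction furnishes the desired equivariant extension $\Phi$.
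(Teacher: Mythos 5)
There is a genuine gap, and it sits exactly at the step the paper's proof is built to handle. You define $\alpha$ by $\phi[x,t]=[\bar\phi(x),\alpha(x)t]$ and call it ``a continuous section over $\partial M^{*}$ of the bundle with fibre $T^{2}/H(x)$''; you then want a continuous lift $\tilde\alpha\colon M^{*}\to T^{2}$. But $\alpha$ is only controlled on the \emph{open} arcs. Near a vertex the orbits $T^{2}/H_{i}\cdot s(x)$ collapse to the fixed point, so continuity of $\phi$ at the fixed point imposes no condition whatsoever on $\alpha(x)$ as $x$ tends to the vertex: in the local model $\mathbb{C}^{2}$ the restriction of an equivariant homeomorphism to an axis has the form $re^{i\varphi}\mapsto \rho(r)e^{i(\varphi+\theta(r))}$ with $\theta$ continuous only on $(0,\varepsilon)$ and possibly unbounded, e.g.\ $\theta(r)=1/r$. (This is why the paper's proof explicitly allows the twisting map $h\colon[-1,1]\to T^{2}$ to be continuous only on $]-1,1[$.) Consequently $\alpha$ need not extend continuously to the closed arcs, there need not exist any continuous lift $\tilde\alpha$ even over $\partial M^{*}$, and the ``global winding class'' of the arcwise lifts is not defined. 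Since your $\Phi$ must restrict to the \emph{given} $\phi$ on $M_{1}$, you cannot first replace $\phi$ by a nicer map; the paper resolves this by equivariantly isotoping $\phi$ to the untwisted map $[x,t]\mapsto[\bar\phi(x),t]$ \emph{inside a collar} of $M_{1}$ (using that $h|_{]-1,1[}$ is null-homotopic on each invariant $2$-sphere) and only then extending rigidly over the remaining $D^{2}\times T^{2}$.

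A second, independent problem is the obstruction argument. Even granting a continuous $\alpha$, the winding class is only well defined modulo the subgroup of $\pi_{1}(T^{2})$ generated by the $\pi_{1}(H_{i})$, since on arc $i$ the lift may be altered by any loop in $H_{i}$; what kills the obstruction is that adjacent isotropy circles $H_{i},H_{i+1}$ span the weight lattice (local standardness at fixed points, equivalent to $\pi_{1}(M)=1$ in the Orlik--Raymond picture), so these subgroups generate all of $\pi_{1}(T^{2})$. Your stated mechanism --- that a nonzero winding would produce a nontrivial loop in $M$ --- does not work: any loop contained in a single $T^{2}$-orbit is null-homotopic in the simply connected $M$, so no contradiction can be extracted that way. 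The correct input is the generation statement for the $H_{i}$, and even that only becomes usable after the regularity of $\alpha$ at the vertices has been repaired, e.g.\ by the collar isotopy of the paper's proof.
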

\begin{proof}
  Since \(M\) is simply connected and closed,
  we have that \(M/T^2\) is a manifold with corners homeomorphic to \(D^2\) by the classification results of Orlik and Raymond \cite[Sections 1.12, 5.1, 5.2]{zbMATH03343393}.

   Again, there is a continuous section \(s:M/T^2\rightarrow M\) to the orbit map.
  Hence, there is an equivariant homeomorphism
     \[(M/T^2\times T^2)/\!\!\sim\;\; \rightarrow M\quad\quad(x,t)\mapsto ts(x).\]
  Here we have \((x,t)\sim (x',t')\) if and only if \(x=x'\) and \(tt'^{-1}\) is contained in the isotropy group of the orbit \(x\).
  Note that these isotropy groups are constant on the interiors of the faces of \(M/T^2\).

  Moreover, \(M_1\) is given by the preimage of \(\partial (M/T^2)\) under the orbit map.
  Note that the equivariant homeomorphisms of \(S^2=([-1,1]\times T^2)/\sim\) are all of the form \(F(x,t)=(f(x),h(x)t)\) with some homeomorphism \(f:[-1,1]\rightarrow [-1,1]\) and a map \(h:[-1,1]\rightarrow T^2\) whose restriction to \(]-1,1[\) is continuous.
  
  Notice that all such homeomorphisms are isotopic to the homeomorphism induced by \((\pm\Id_{[-1,1]})\times \Id_{T^2}\).
   Indeed, since \([-1,1]\) is contractible, we can in a first step homotope \(h\) to the constant map \(h_1(x)=1\in T^2\), \(x\in [-1,1]\).
    So we get an equivariant isotopy from \(F\) to \(F_1(x,t)=(f(x),t)\), \((x,t)\in [-1,1]\times T^2\).
    The homeomorphisms of \([-1,1]\) are precisely the strictly monotone continuous self-maps of this interval which leave the boundary invariant.
    As the set of these continuous functions consists of the disjoint union of two convex sets, we get an isotopy from \(F_1\) to \((\pm\Id_{[-1,1]})\times \Id_{T^2}\).

  So we can extend any equivariant homeomorphism \(g\) of \(M_1\) to an equivariant homeomorphism \(G\) of the preimage \(U\) of a collar of \(M/T^2\) in such a way that the restriction of \(G\)
  to \(\partial U=S^1\times T^2\) is of the form \(A\times \Id_{T^2}\), where \(A\in O(2)\subset \Homeo(S^1)\).
  So we can extend \(G\) to all of \(M\) by \(A\times \Id_{T^2}\) on the complement \(D^2\times T^2\) of \(U\).
\end{proof}

\begin{proof}[Proof of Theorem~\ref{sec:gkm-manifolds-2}]
  If \(M_2\) and \(N_2\) are equivariantly homeomorphic, then the same is true for \(M_1\) and \(N_1\).
  Hence, it follows that \(M\) and \(N\) have isomorphic GKM-graphs.

  So we have to prove the other implication.
  To do so, first note that the GKM-graph of \(M\) determines the equivariant homeomorphism types of all fixed point components of codimension-two subtori \(T'\).
  Indeed, the GKM-graphs of these fixed point components consist of those connected subgraphs \(\Gamma'\) of the GKM-graph of \(M\) such that for all edges in \(\Gamma'\) the corresponding isotropy group contains \(T'\).
  Since all fixed point components of \(T'\) are closed, simply connected and $4$-dimensional by assumption, it follows from the classification results in  \cite[Sections 1.12, 5.1, 5.2]{zbMATH03343393}, that the equivariant homeomorphism type of these fixed point components is determined by their GKM-graphs.

  Now, assume that the GKM-graphs of \(M\) and \(N\) are isomorphic.
  Then we have a homeomorphism \(g_0:M^T\rightarrow N^T\).

  Note that \(M_1\) is constructed from \(M^T\) by gluing in several copies of \(S^2\):
  \[M_1=M^T\cup_{f_M}\amalg S^2\]
  and similarly for \(N_1=N^T\cup_{f_N}\amalg S^2\).
  By Lemma~\ref{sec:gkm-manifolds}, we can extend \(g_0\) to a equivariant homeomorphism \(g_1:M_1\rightarrow N_1\).

  By the above remarks, there are finitely many simply connected $4$-dimensional \(T^d\)-manifolds \(S_1,\dots,S_k\) determined by the GKM-graph of \(M\) such that
  \[M_2=M_1\cup_{f_M'} \amalg S_i\]
  and
  \[N_2=N_1\cup_{f_N'} \amalg S_i.\]
  By Lemma~\ref{sec:gkm-manifolds-1}, we can extend \(g_1\) to an equivariant homeomorphism \(g_2:M_2\rightarrow N_2\).
  Hence the claim follows.
\end{proof}




\section{The case where $\dim M^{S^1}=6$ for some circle $S^1\subset T^3$}
\label{sec:dim6}

In this section we prove the following theorem.

\begin{theorem}
  \label{sec:case-were-dim}
    Assume that we are in the third case of Theorem~\ref{thm:amann_kennard}.
  Assume that there is an \(S^1\subset T^{3}\) with  a $6$-dimensional fixed point component \(P\).
   Then \(H^*(M;\mathbb{Z})\cong H^*(\mathbb{C} P^5;\mathbb{Z})\).
 \end{theorem}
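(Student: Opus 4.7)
The plan proceeds in two main stages: computing the additive cohomology of $M$, and then pinning down the ring structure.

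For the additive cohomology, apply Theorem~\ref{thm:wilking2}(1) with $G=S^1$ and $\delta(G)=1$ to the codimension-$4$ totally geodesic submanifold $P\subset M$; the inclusion $P\hookrightarrow M$ is $4$-connected. Combined with case~(3) of Theorem~\ref{thm:amann_kennard}, Poincar\'e duality and the universal coefficient theorem on $P^6$ yield $H^*(P;\mathbb{Z})\cong H^*(\mathbb{C}P^3;\mathbb{Z})$ as graded abelian groups. Setting $e=\mathrm{PD}[P]\in H^4(M)$ and applying Lemma~\ref{lem:wilking3} with $k=4$ and $l=2$, cup-product with $e$ is surjective $H^2(M)\twoheadrightarrow H^6(M)$ and injective $H^4(M)\hookrightarrow H^8(M)$. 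Poincar\'e duality, the UCT, the low-degree data from case~(3), and the equation $\chi(M)=6$ then force $H^*(M;\mathbb{Z})\cong H^*(\mathbb{C}P^5;\mathbb{Z})$ as graded abelian groups.

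For the ring structure it suffices to show that the generator $x\in H^2(M)=\mathbb{Z}$ satisfies $x^2\neq 0$ in $H^4(M)$: the $4$-connectedness of the inclusion $i\colon P\hookrightarrow M$ makes $i^*\colon H^4(M)\to H^4(P)$ an isomorphism, so $x_P^2$ would generate $H^4(P)$ and make $P$ a cohomology $\mathbb{C}P^3$; the push--pull identity $\int_M\alpha\cup e=\int_P\alpha|_P$ with $\alpha=x^3$ then produces $\int_M x^5=\pm 1$, and the Poincar\'e pairing on $M$ forces every $x^k$ to generate $H^{2k}(M)$. To prove $x^2\neq 0$, consider the $6$-dimensional submanifold $N_2\subset M$ fixed by the second involution $\tau_2\in T^3$ from case~(3). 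In the easy subcase, $N_2$ is not the pointwise fixed set of any circle in $T^3$; then the kernel of the $T^3$-action on $N_2$ is $\langle\tau_2\rangle$, its effective quotient is a three-torus of symmetry rank $3=\dim N_2/2$, and Theorem~\ref{thm:grove_searle}(2) together with $b_2(N_2)=b_2(M)=1$ (from $3$-connectedness) classifies $N_2\cong\mathbb{C}P^3$; restriction then sends $x^2$ to a generator of $H^4(N_2)$, establishing $x^2\neq 0$.

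In the remaining hard subcase, $N_2=M^{S^1_2}$ for some circle $S^1_2\subset T^3$, and I argue by contradiction. Assuming $x^2=0$, Poincar\'e duality forces $H^*(M;\mathbb{Q})\cong H^*(S^2\times\mathbb{H}P^2;\mathbb{Q})$ and restriction makes both $P$ and $N_2$ rationally $S^2\times\mathbb{H}P^1$. Theorem~\ref{thm:wilking2}(2) shows $P\cap N_2$ is two-dimensional and the intersection is transverse, and applying Theorem~\ref{thm:kww} to $M$ with $n=2$, $T^2=\langle S^1,S^1_2\rangle$, $N_1=P$, $N_2=N_2$ produces a second $T^2$-fixed component $F_1$ with $H^*(F_1;\mathbb{Q})\cong H^*(S^2\times\mathbb{C}P^l;\mathbb{Q})$ for some $l\geq 1$. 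An exhaustive analysis of $l\in\{1,2,3,4\}$ yields contradictions: with $b_2(M)=1$ for $l=4$; with case~(1) of Theorem~\ref{thm:amann_kennard} for $l=3$ (an involution-fixed $8$-manifold); with $b_2(N_i)=1$ for $l=2$; and with Theorem~\ref{thm:hsiang-kleiner} for $l=1$ (a simply connected positively curved $4$-manifold with $S^1$-action and $b_2=2$). Hence $x^2\neq 0$ in both subcases.

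The principal difficulty is the hard subcase: verifying the transversality of $P\cap N_2$ and the rational cohomology types of $P$ and $N_2$ under the assumption $x^2=0$, and systematically ruling out each value of $l$ in the conclusion of Theorem~\ref{thm:kww} using the preliminary classification results from Section~\ref{sec:prelim}.
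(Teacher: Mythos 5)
Your additive computation and your treatment of the ``$x^2=0$'' contradiction (via Theorem~\ref{thm:kww} and the low-dimensional classification results) follow the paper's Cases 2 and 3 of its Lemma~\ref{sec:proof-4} closely enough. But there is a genuine gap in your reduction of the ring structure to the statement $x^2\neq 0$. Knowing that $x^2\neq 0$ in $H^4(M;\mathbb{Z})\cong\mathbb{Z}$ only tells you $x^2=\alpha y$ for a generator $y$ and some $\alpha\neq 0$; it gives the \emph{rational} cohomology ring of $\mathbb{C}P^5$, not the integral one. Your sentence ``$i^*$ is an isomorphism, so $x_P^2$ would generate $H^4(P)$'' is a non sequitur: a nonzero element of $\mathbb{Z}$ need not be a generator, whether you test it in $H^4(M)$ or in $H^4(P)$. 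Establishing $\alpha=\pm1$ is precisely the hard point, and the paper spends the bulk of its Case~1 on it: one must show that $P$ is an \emph{integral} cohomology $\mathbb{C}P^3$, which it does either via Lemma~\ref{lem:fang_rong2} (when some isotropy group has a codimension-two fixed set in $P$) or by analyzing the isotropy structure of the induced $T^2$-action on $P$, passing to the circle bundle $S\to P$ classified by a generator of $H^2(P;\mathbb{Z})$, and invoking Proposition~\ref{prop:fang_rong3} to show $S$ is an integral homology $7$-sphere. Your proposal has no substitute for this step, and your push--pull identity $\int_M x^3\cup e=\int_P x_P^3=\pm1$ already presupposes its conclusion.

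Two smaller points. First, your claim that Theorem~\ref{thm:wilking2}(2) shows $P\cap N_2$ is two-dimensional and transverse is unjustified: that theorem only gives that the intersection is totally geodesic and that the inclusion into $N_2$ is $2$-connected; the intersection may well be $4$-dimensional. The paper handles this as a separate (and in fact easier) case, where Lemma~\ref{lem:fang_rong2} applied to $P$ immediately yields $H^*(P;\mathbb{Z})\cong H^*(\mathbb{C}P^3;\mathbb{Z})$ and hence the full integral ring of $M$. Second, in your ``easy subcase'' the restriction $H^4(M)\to H^4(N_2)$ is not known to be injective ($N_2\hookrightarrow M$ is only $3$-connected), so even there you only obtain $x^2\neq 0$, not that $x^2$ generates --- which loops back to the main gap above.
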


For the convenience of the reader, we recall that being in the third case of Theorem~\ref{thm:amann_kennard} refers to the fact that \(M^{10}\) is a closed, simply connected, positively curved manifold with an effective, isometric \(T^3\)-action such that all of the following statements hold: 
   \begin{itemize}
 \item \(H_i(M;\mathbb{Z})=H_i(\mathbb{C}P^5;\mathbb{Z})\) for \(i\leq 3\)
 \item \(H^4(M;\mathbb{Q})\neq 0\)
 \item \(\chi(M)=\chi(\mathbb{C}P^5)\)
 \item There are two involutions in \(T^3\) fixing two different $6$-dimensional submanifolds \(N_1,N_2\subset M\).
 \end{itemize}

 For the proof of the above theorem we need two lemmas.

\begin{lemma}
  \label{sec:proof-4}
  Assume that we are in the third case of Theorem~\ref{thm:amann_kennard}.
  Assume moreover that there is  a $6$-dimensional submanifold \(P\) fixed by a circle such that \(P\) intersects \(N_1\) or \(N_2\) transversely.
  Then \(H^*(M;\mathbb{Z})\cong H^*(\mathbb{C} P^5;\mathbb{Z})\).
\end{lemma}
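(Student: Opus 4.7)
The plan is to combine Wilking's connectedness lemmas with the transversality of $P$ and $N_1$ to pin down the integral cohomology ring of $M$. Without loss of generality assume $P$ meets $N_1$ transversely (the $N_2$-case is symmetric), and set $Q:=P\cap N_1$. By Theorem~\ref{thm:wilking2}(1), since $P$ is fixed by a circle (so $\delta=1$) and has codimension $4$ in $M$, the inclusion $P\hookrightarrow M$ is $4$-connected; similarly $N_1\hookrightarrow M$ is $3$-connected (since $N_1$ is fixed by an involution, $\delta=0$). By Theorem~\ref{thm:wilking2}(2), $Q$ is a totally geodesic $2$-dimensional submanifold and each of $Q\hookrightarrow N_1$ and $Q\hookrightarrow P$ is $2$-connected. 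Hence $Q$ is a simply connected closed surface, so $Q\cong S^2$; moreover the composition $H_2(Q;\mathbb Z)\to H_2(N_1;\mathbb Z)\to H_2(M;\mathbb Z)$ is an isomorphism of copies of $\mathbb Z$, so $[Q]$ generates $H_2(M;\mathbb Z)$ and $z|_Q$ is a generator of $H^2(S^2;\mathbb Z)$ for any generator $z\in H^2(M;\mathbb Z)$.

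Next I would pin down the additive structure of $H^*(M;\mathbb Z)$ by applying Lemma~\ref{lem:wilking3} to the $4$-connected inclusion $P\subset M$ with $n=10$, $k=4$, $l=2$; since $n-k-2l=2>0$, cup product with $p:=\mathrm{PD}_M(P)\in H^4(M;\mathbb Z)$ is surjective $H^2\twoheadrightarrow H^6$ and injective $H^4\hookrightarrow H^8$. Combined with the Case~$3$ hypothesis of Theorem~\ref{thm:amann_kennard} ($H^i(M;\mathbb Z)=H^i(\mathbb{C}P^5;\mathbb Z)$ for $i\leq 3$, $b_4(M)\geq 1$, $\chi(M)=6$), together with Poincar\'e duality and the universal coefficient theorem, this forces $H^i(M;\mathbb Z)$ to be $\mathbb Z$ for $i\in\{0,2,4,6,8,10\}$ and to vanish otherwise, exactly as for $\mathbb{C}P^5$.

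Setting $n_1:=\mathrm{PD}_M(N_1)$, transversality gives $p\cdot n_1=\mathrm{PD}_M(Q)\in H^8(M;\mathbb Z)$, and the identity
\[
\langle z\cdot p\cdot n_1,\,[M]\rangle \;=\; \langle z|_Q,\,[Q]\rangle \;=\; \pm 1
\]
shows that $p\cdot n_1$ generates $H^8(M;\mathbb Z)=\mathbb Z$; hence $p$ and $n_1$ each generate $H^4(M;\mathbb Z)=\mathbb Z$ (up to sign) and the bilinear pairing $H^4\otimes H^4\to H^8$ has discriminant $\pm 1$. To complete the proof it then suffices to show that $z^2$ is also a generator of $H^4(M;\mathbb Z)$, for then $z^2=\pm p$ and one gets $H^*(M;\mathbb Z)\cong \mathbb Z[z]/z^6$ by cup-product propagation. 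Applying Lemma~\ref{lem:wilking3} to $Q\subset N_1$ with $n=6$, $k=4$, $l=0$ (so $n-k-2l=2>0$) shows that $q_N:=p|_{N_1}=\mathrm{PD}_{N_1}(Q)$ generates $H^4(N_1;\mathbb Z)$. Writing $z^2|_{N_1}=\beta\cdot q_N$ and noting that the restriction $H^4(M;\mathbb Z)\to H^4(N_1;\mathbb Z)$ is an isomorphism, it is equivalent to prove $\beta=\pm 1$.

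I would treat this via two subcases according to the kernel of the $T^3$-action on $N_1$. If this kernel is zero-dimensional, then the effective $T^3$-action on $N_1$ has rank $3=\dim N_1/2$, so by Theorem~\ref{thm:grove_searle}(2) and $b_2(N_1)=1$, we obtain $N_1\cong \mathbb{C}P^3$; the known ring structure of $\mathbb{C}P^3$ yields $\beta=\pm 1$ at once. Otherwise a subcircle of $T^3$ fixes $N_1$ pointwise, so $N_1$ is itself a $6$-dimensional circle-fixed component (Case~$3$ of Theorem~\ref{thm:amann_kennard} excludes an $8$-dimensional such component). In this second subcase I would argue by contradiction: suppose $z^2=0$, so that $H^*(M;\mathbb Q)\cong H^*(S^2\times\mathbb{H}P^2;\mathbb Q)$; then both $N_1$ and $P$ have the rational cohomology of $S^2\times S^4$, and Theorem~\ref{thm:kww} applied to the $T^2$-subaction generated by $S^1_P$ and the circle fixing $N_1$ (with $Q\cong S^2$ playing the role of $F_0$) produces a second $T^2$-fixed component with cohomology $H^*(S^2\times \mathbb{C}P^l;\mathbb Q)$, $l>0$, whose existence is incompatible with the positive-curvature and symmetry constraints on $M$. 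The main obstacle is precisely this second subcase, since $\mathbb{C}P^5$ and $S^2\times \mathbb{H}P^2$ share identical integral Betti numbers in dimension $10$, so genuinely geometric input (via Theorem~\ref{thm:kww} or an equivalent GKM-type analysis) is needed to distinguish them rationally.
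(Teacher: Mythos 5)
Your additive computation and your use of the transverse intersection $Q=P\cap N_1$ are sound, and in fact extract a bit more than the paper does at the corresponding stage (unimodularity of the pairing, so that $p=\mathrm{PD}_M(P)$ and $n_1$ generate $H^4(M;\mathbb Z)$ integrally and $p^2$ generates $H^8$); your first subcase ($T^3$ almost effective on $N_1$, hence $N_1\cong\mathbb{C}P^3$ by Theorem~\ref{thm:grove_searle}) and your appeal to Theorem~\ref{thm:kww} to exclude $z^2=0$ also match the paper's Cases 3 and 2. The gap is in your second subcase: there you must show $\beta=\pm1$ in $z^2=\beta p$, but your contradiction argument only excludes $\beta=0$. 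The negation of $\beta=\pm1$ is not $z^2=0$: the possibility $|\beta|\ge 2$ remains, and none of the rational or Betti-number information you have assembled rules it out (a simply connected closed $6$-manifold with free cohomology $\mathbb Z,0,\mathbb Z,0,\mathbb Z,0,\mathbb Z$ and $u^2=\beta v$ exists for every $\beta$, so nothing purely homotopy-theoretic excludes this for the circle-fixed submanifolds $P$ or $N_1$). You correctly sense that geometric input is needed, but you locate it in the wrong place: distinguishing $\mathbb{C}P^5$ from $S^2\times\mathbb{H}P^2$ rationally is the easier half; the hard half is upgrading ``rational cohomology $\mathbb{C}P^3$'' to ``integral cohomology $\mathbb{C}P^3$'' for a circle-fixed component.

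That upgrade is precisely where the paper invests most of its effort (its Case 1, $z^2\neq 0$). There one shows $P$ is an integral cohomology $\mathbb{C}P^3$: either some isotropy group of the induced $T^2$-action on $P$ has a codimension-two fixed set and Lemma~\ref{lem:fang_rong2} applies, or else one passes to the total space $S$ of the circle bundle over $P$ whose first Chern class is primitive, lifts the torus action to $S$, checks that all isotropy groups on $S$ are connected with fixed components that are circles or lens spaces, and invokes Proposition~\ref{prop:fang_rong3} to conclude that $S$ is an integral homology $S^7$, whence $H^*(P;\mathbb Z)\cong H^*(\mathbb{C}P^3;\mathbb Z)$ by the Gysin sequence; since $P\hookrightarrow M$ is $4$-connected, $(z|_P)^2$ generating $H^4(P;\mathbb Z)$ then forces $z^2$ to generate $H^4(M;\mathbb Z)$. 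Some argument of this kind, applied to $P$ or to $N_1$, is indispensable in your second subcase; without it the proof is incomplete. (A minor further point: you invoke $\mathrm{PD}_M(N_1)$ before establishing that $N_1$ is orientable; this is harmless a posteriori, since in each of your subcases $N_1$ turns out to be $\mathbb{C}P^3$ or circle-fixed, but the order of the argument should be adjusted.)
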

\begin{proof}
  Without loss of generality, we may assume that  \(\dim P\cap N_1=2\).
  By Theorem~\ref{thm:wilking2}, we have that \(P\hookrightarrow M\) is $4$-connected.
  

Therefore, we have
\[H_i(P;\mathbb{Z})\cong H_i(M;\mathbb{Z}) \cong H_i(\mathbb{C}P^5;\mathbb{Z}) \text{ for }i\leq 3.\]

By Poincar\'e duality, it follows that \(H^4(P;\mathbb{Z})\cong \mathbb{Z}\).
Hence by the universal coefficient theorem we have \(H_4(P;\mathbb{Z})=\mathbb{Z}\).

Since \(P\hookrightarrow M\) is four-connected, it follows that \(H^4(M;\mathbb{Z})\cong \mathbb{Z}\) and \(H_4(M;\mathbb{Z})\) is cyclic and therefore isomorphic to \(\mathbb{Z}\).
By the Euler characteristic formula in Theorem~\ref{thm:amann_kennard}, it follows that \(H^5(M;\mathbb{Z})\cong H_5(M;\mathbb{Z})\) is torsion.
By the universal coefficient theorem, it follows that \(H_5(M;\mathbb{Z})=0\), since \(H^6(M;\mathbb{Z})\cong H_4(M;\mathbb{Z})\cong \mathbb{Z}\) is torsion-free.

Hence, we have shown that \(H_*(M;\mathbb{Z})\) and \(H_*(\mathbb{C} P^5;\mathbb{Z})\) are isomorphic as groups.
It remains to determine the cup product in cohomology.

By Lemma~\ref{lem:wilking3}, there exist \(z\in H^2(M;\mathbb{Z})\), \(x\in H^4(M;\mathbb{Z})\) such that \(zx\) generates \(H^6(M;\mathbb{Z})\).
By Poincar\'e duality, it follows that \(zx^2\) generates \(H^{10}(M;\mathbb{Z})\).

Now there are three cases:

\textbf{Case 1.} \(z^2\neq 0\).
Since \(H^4(M;\mathbb{Z})=\mathbb{Z}\), there is an \(\alpha\in \mathbb{Z}\setminus\{0\}\) such that \(z^2=\alpha x\).
Therefore \(H^*(M;\mathbb{Q})\) is generated as a \(\mathbb{Q}\)-algebra by \(z\).
Hence it is isomorphic to \(H^*(\mathbb{C} P^5;\mathbb{Q})\) as a ring, i.e.
\(M\) is a rational cohomology \(\mathbb{C} P^5\).

By \cite[p. 393]{MR0413144}, it follows that \(P\) is a rational cohomology \(\mathbb{C}P^3\).
We will show that \(P\) is an integral cohomology \(\mathbb{C} P^3\), i.e. \(P\) has the same integral cohomology ring as \(\mathbb{C} P^3\).
When this is shown it follows that \(\alpha=\pm 1\) because \(P\hookrightarrow M\) is four-connected.
Therefore \(H^*(M;\mathbb{Z})\) is generated as a ring by \(z\).
Hence it is isomorphic to \(H^*(\mathbb{C} P^5;\mathbb{Z})\) as rings, i.e.
\(M\) is an integral cohomology \(\mathbb{C} P^5\).

It remains to prove that \(P\) is an integral cohomology \(\mathbb{C} P^3\).
By Theorem~\ref{thm:grove_searle}, we can assume that  \(P\) is not fixed by a $2$-dimensional subtorus of \(T^3\).
Hence, if \(H\) is the kernel of the \(T^3\)-action on \(P\), then \(T^2=T^3/H\) is a $2$-torus which acts effectively and isometrically on \(P\).

Let \(x\in P\) and assume that the isotropy subgroup \(T^2_x\) of \(x\) in \(T^2\) is disconnected.
Denote by \(F\) the \(T^2_x\)-fixed point component in \(P\) containing \(x\).
Then \(F\) is $2$- or $4$-dimensional.
If \(F\) is $2$-dimensional, then \(T_x^2\) is $1$-dimensional by the upper bound for the symmetry rank given by \cite{MR1255926}.
Since \(T_x^2\) is disconnected and acts effectively on \(P\), it follows from an inspection of the \(T_x^2\)-action on the normal space \(N_x(F,P)\) to \(F\), that there is a $4$-dimensional \(F\subset F'\subset P\) such that \(F'\) is fixed by a non-trival subgroup of \(T_x^2\).
So in both cases our claim follows from  Lemma \ref{lem:fang_rong2}.

Therefore, we can assume that the action on \(P\) has only connected isotropy groups and no $4$-dimensional submanifold of \(P\) is fixed by a subgroup of \(T^2\).

Denote by \(p:S\rightarrow P\) the \(S^1\)-bundle whose first Chern class generates \(H^2(P;\mathbb{Z})\).
Since \(P\) is simply connected it follows from an inspection of the Gysin sequence that \(S\) is two-connected.
Again, by the Gysin sequence, to show that \(P\) is an integral cohomology \(\mathbb{C} P^3\), it suffices to show that \(S\) is an integral homology sphere.
By \cite[Corollary 1.3]{0346.57014}, there is a \(T^3\)-action on \(S\) and a homomorphism \(\phi:T^3\rightarrow T^2\) such that:
\begin{itemize}
\item The bundle projection \(p:S\rightarrow P\) is \(\phi\)-equivariant.
\item The kernel of \(\phi\) is connected and $1$-dimensional. Moreover, it acts by multiplication on the fibers of \(p\).
\end{itemize}

Let \(x\in S\) with non-trivial isotropy subgroup \(T_x^3\subset T^3\). Then it follows from these properties of the \(T^3\)-action that the restriction of \(\phi\) induces an isomorphism of \(T^3_x\) and \(T^2_{p(x)}\).
Therefore \(T_x^3\) is connected.
Moreover, the \(T^3_x\)-fixed point component \(F_x\)  in \(S\) containing \(x\) is the restriction of the \(S^1\)-bundle \(S\rightarrow P\) to the \(T_{p(x)}^2\)-fixed point component \(G_{p(x)}\) in \(P\) containing \(p(x)\).

Since the latter component is $0$- or $2$-dimensional and \(P\) is a rational cohomology \(\mathbb{C} P^3\), it follows that \(F_x\) is a $3$-dimensional lens space or a circle.

Hence, it follows from Proposition~\ref{prop:fang_rong3} that \(S\) is a integral homology sphere.

\textbf{Case 2.} \(z^2=0\) and \(N_1\) is fixed by a circle.
Then the integral cohomology ring of \(M\) is isomorphic to that of \(S^2\times \mathbb{H}P^2\).
Since \(P\hookrightarrow M\) and \(N_1\hookrightarrow M\) are four-connected, it follows that \(P\) and \(N_1\) have the integral cohomology of \(S^2\times \mathbb{H} P^1\).

Hence, it follows from Theorem~\ref{thm:kww} that there is a \(T^2\)-fixed point component \(F\) of dimension at least four with cohomology of \(S^2\times \mathbb{C} P^l\).
Here \(T^2\subset T^3\) is the $2$-dimensional torus generated by the two circles which fix \(P\) and \(N_1\), respectively.

If the dimension of \(F\) is six or larger then a contradiction arises from Theorem~\ref{thm:grove_searle}.

If the dimension of \(F\) is four and \(F\) is not fixed by all of \(T^3\) then a contradiction arises from Theorem \ref{thm:hsiang-kleiner}.

If the dimension of \(F\) is four and \(F\) is fixed by \(T^3\) then a contradiction arises from Theorem \ref{thm:grove_searle}.

So this case does not appear.

\textbf{Case 3.} \(z^2=0\) and \(N_1\) is not fixed by a circle.
Then \(N_1\hookrightarrow M\) is three-connected by Theorem~\ref{thm:wilking2}.
Moreover, the \(T^3\)-action on \(N_1\) is almost effective.
Hence, it follows from Theorem \ref{thm:grove_searle} that \(H^*(N_1;\mathbb{Z})\cong H^*(\mathbb{C}P^3;\mathbb{Z})\).
This implies \(z^2|_{N_1}\neq 0\). A contradiction.

So this case does not appear.
\end{proof}

\begin{lemma}
  \label{sec:proof-3}
  Assume that we are in the third case of Theorem~\ref{thm:amann_kennard}.
  Assume moreover that there is  a $6$-dimensional submanifold \(P\) fixed by a circle such that \(P\) intersects \(N_1\) or \(N_2\) in a $4$-dimensional manifold.
   Then \(H^*(M;\mathbb{Z})\cong H^*(\mathbb{C} P^5;\mathbb{Z})\).
\end{lemma}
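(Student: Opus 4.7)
The plan is to show that $P$ is homeomorphic to $\mathbb{C} P^3$, from which the integral cohomology ring of $M$ is obtained via the same argument as in Case 1 of the proof of Lemma~\ref{sec:proof-4}. Let $Q := P\cap N_1$, which is 4-dimensional by assumption, and which is a totally geodesic submanifold of $M$ by Theorem~\ref{thm:wilking2}(2).

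The image of $\sigma_1$ in the quotient torus $T^2 := T^3/S^1_P$ is nontrivial, since otherwise $\sigma_1 \in S^1_P$ would fix $P$ pointwise, giving $P \subset N_1$; as both submanifolds are 6-dimensional, this would force $P = N_1$, contradicting $\dim Q = 4$. Consequently $Q$ is a codimension-2 fixed component of the involution $\bar\sigma_1 \in T^2$ inside $P$. When the $T^2$-action on $P$ is effective, Lemma~\ref{lem:fang_rong2} applied to the 6-dimensional closed, simply connected, positively curved manifold $P$ (with its $T^{[(6-1)/2]} = T^2$-symmetry and the codimension-2 fixed set $Q$ of $\bar\sigma_1$) yields $P \cong S^6$ or $\mathbb{C} P^3$; the 4-connectedness of $P\hookrightarrow M$ (Theorem~\ref{thm:wilking2}(1)) together with $H_2(M;\mathbb{Z}) = \mathbb{Z}$ forces $H_2(P;\mathbb{Z}) = \mathbb{Z}$, excluding $S^6$. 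In the remaining subcase, the stabilizer of $P$ in $T^3$ must be a 2-torus $T^2_P \supsetneq S^1_P$ (it has dimension at most 2, since it acts faithfully on the rank-4 normal bundle of $P$); then $P\hookrightarrow M$ is 5-connected by Theorem~\ref{thm:wilking2}(1) with $\delta(T^2_P)=2$, and I would adapt the principal $S^1$-bundle construction with Proposition~\ref{prop:fang_rong3} from Case 1 of the proof of Lemma~\ref{sec:proof-4} to again force $P$ to be an integral cohomology $\mathbb{C} P^3$.

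With $P \cong \mathbb{C} P^3$ in hand, the conclusion follows as in Case 1 of the proof of Lemma~\ref{sec:proof-4}: Lemma~\ref{lem:wilking3} applied to the 4-connected inclusion $P\hookrightarrow M$ (with $k=4$, $l=2$, so $n-k-2l = 2 > 0$), together with Poincar\'e duality and the Euler characteristic $\chi(M) = 6$ from Theorem~\ref{thm:amann_kennard}, pins down the additive integral cohomology of $M$ as that of $\mathbb{C} P^5$. The ring structure then follows: letting $z$ generate $H^2(M)=\mathbb{Z}$, the restriction $z|_P$ generates $H^2(P)$, so $z^2|_P$ generates $H^4(P)=\mathbb{Z}$; the injectivity of the restriction $H^4(M)\hookrightarrow H^4(P)$ forces $z^2$ to be a generator of $H^4(M)$, and Poincar\'e duality on $M$ then produces $z^5$ as a generator of $H^{10}(M)$, completing the identification with $H^*(\mathbb{C} P^5;\mathbb{Z})$. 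The main obstacle in this plan is the non-effective $T^2$-action subcase on $P$: without the rank-2 hypothesis of Lemma~\ref{lem:fang_rong2}, distinguishing the ring structure of $P$ from that of $S^2\times S^4$ requires the more delicate $S^1$-bundle argument, and some care is needed to ensure that the isotropy groups of the relevant action on the bundle space remain connected so that Proposition~\ref{prop:fang_rong3} is applicable.
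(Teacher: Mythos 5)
Your strategy coincides with the paper's: its proof consists of exactly the two steps you outline, namely applying Lemma~\ref{lem:fang_rong2} to $P$, with $Q=P\cap N_i$ serving as the codimension-two fixed point component of the involution, to conclude $H^*(P;\mathbb{Z})\cong H^*(\mathbb{C}P^3;\mathbb{Z})$, and then the connectedness/Poincar\'e-duality bookkeeping. However, two steps in your write-up are not closed. The subcase you flag as the main obstacle---the kernel of the $T^3$-action on $P$ being a $2$-torus $T^2_P$---does not require the $S^1$-bundle machinery, and as written your treatment of it is a sketch, not an argument. The quick resolution: by Berger's theorem $P$ contains a $T^3$-fixed point $x$, and $T^2_P$ acts effectively on the $4$-dimensional normal space at $x$ (an isometry fixing $P$ pointwise and acting trivially there is the identity), so that space splits into two weight spaces with linearly independent weights; the identity component of the kernel of either weight is a circle in $T^3$ whose fixed point set has codimension two in $M$, and Theorem~\ref{thm:grove_searle}(1) then settles $M$ outright. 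This is precisely how the paper dismisses a $2$-dimensional kernel in Case 1 of Lemma~\ref{sec:proof-4}. Note also that your dichotomy misses the case of a $1$-dimensional disconnected kernel, which is harmless since the effective quotient is still a $2$-torus and Lemma~\ref{lem:fang_rong2} applies.

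The second gap is in the ring structure: knowing that $z$ and $z^2$ generate $H^2(M;\mathbb{Z})$ and $H^4(M;\mathbb{Z})$ does not, together with Poincar\'e duality alone, give that $z^5$ generates $H^{10}(M;\mathbb{Z})$; a priori $z^3$ could fail to generate $H^6(M;\mathbb{Z})\cong\mathbb{Z}$, and then $z^5=z^2\cup z^3$ need not be a generator of the top class. You must invoke the surjectivity part of Lemma~\ref{lem:wilking3} (with $k=4$, $l=2$): writing $e=az^2\in H^4(M;\mathbb{Z})$ for the Poincar\'e dual of $[P]$, surjectivity of $\cup e\colon H^2(M;\mathbb{Z})\to H^6(M;\mathbb{Z})$ shows $az^3$, hence $z^3$, generates $H^6(M;\mathbb{Z})$; unimodularity of the pairings $H^4\otimes H^6\to H^{10}$ and $H^2\otimes H^8\to H^{10}$ then gives that $z^5$ and $z^4$ generate the top two groups. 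This is the content of the paper's assertion that $P\hookrightarrow M$ is six-connected. Both gaps are repairable, and with these repairs your argument is the paper's.
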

\begin{proof}
  By Theorem~\ref{thm:wilking2}, the inclusion \(P\hookrightarrow M\) is $4$-connected.
Hence, it follows from Lemma~\ref{lem:fang_rong2} that \(H^*(P;\mathbb{Z})\cong H^*(\mathbb{C}P^3;\mathbb{Z})\).

Hence \(H_4(M;\mathbb{Z})\) is cyclic and therefore isomorphic to \(\mathbb{Z}\).
By Poincare duality \(H^6(M;\mathbb{Z})\) is torsion-free.
Hence by the universal coefficient theorem \(H_5(M)\) is torsion-free and therefore zero by the Euler-characteristic formula in Theorem~\ref{thm:amann_kennard}.

So we have \(H_*(M;\mathbb{Z})\cong H_*(\mathbb{C}P^5;\mathbb{Z})\).
In particular, \(P\hookrightarrow M\) is six-connected.

So if \(x\in H^2(M;\mathbb{Z})\) is a generator, then \(x^2\) and \(x^3\) generate \(H^4(M;\mathbb{Z})\) and \(H^6(M;\mathbb{Z})\), respectively.
By Poincare duality it follows that \(x^5\) generates \(H^{10}(M;\mathbb{Z})\).
Hence, again by Poincare duality, \(x^4\) generates \(H^8(M;\mathbb{Z})\).
Therefore the claim follows.
\end{proof}

  Now we are ready to prove Theorem~\ref{sec:case-were-dim}.

 \begin{proof}[Proof of Theorem \ref{sec:case-were-dim}]
   If \(P\) intersects one of the manifolds \(N_1\) and \(N_2\) transversely, then the claim follows from Lemma~\ref{sec:proof-4}.
   If \(P\) intersects one of the manifolds \(N_1\) and \(N_2\) non-transversely,
   then the claim holds by Lemma~\ref{sec:proof-3}.
 \end{proof}

\section{The case where $\dim M^{S^1}\leq 4$ for all circles $S^1\subset T^3$}
\label{sec:dim4}

In this section we prove the following theorem.

\begin{theorem}
  \label{sec:proof-2}
    Assume that we are in the third case of Theorem~\ref{thm:amann_kennard}.
  Assume that for all \(S^1\subset T^{3}\) we  have \(\dim M^{S^1}\leq 4\).
   Then \(H^*(M;\mathbb{Z})\cong H^*(\mathbb{C} P^5;\mathbb{Z})\).
\end{theorem}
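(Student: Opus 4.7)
The plan is a four-stage argument. First I would establish the GKM$_3$ property and count $T^3$-fixed points: positive curvature plus Berger's theorem gives condition~(1) of Definition~\ref{sec:gkm-manifolds-6}, which combined with the hypothesis $\dim M^{S^1}\le 4$ yields GKM$_3$ via Lemma~\ref{sec:gkm-manifolds-5}. Since GKM$_3$ implies GKM$_2$, one has $\dim M^{T^2}\le 2$ for every $2$-subtorus $T^2\subset T^3$, forcing $M^{T^3}$ to be $0$-dimensional with $|M^{T^3}|=\chi(M)=6$.

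The heart of the argument is to prove that $H^{\mathrm{odd}}(M;\mathbb{Q})=0$. By the case-(3) hypotheses and Poincar\'e duality, $b_i(M;\mathbb{Q})=b_i(\mathbb{C}P^5)$ for $i\notin\{4,5,6\}$, and the Euler-characteristic relation reads $2b_4-b_5=2$. Combined with the Betti number inequality $\sum_i b_i(M;\mathbb{Q})\ge|M^{T^3}|=6$ (with equality iff the action is equivariantly formal over $\mathbb{Q}$), vanishing of $b_5$ is equivalent to equivariant formality. To prove $b_5=0$ I would apply the spectral sequence of Lemma~\ref{sec:equiv-cohom-local-1} to the almost-free $T^3$-action on $M\setminus M_2$, together with the long-exact sequence of the pair $(M,M_2)$. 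The $2$-skeleton $M_2$ is a union of totally geodesic submanifolds of dimension at most $4$: its $4$-dimensional pieces are circle-fixed components, hence $S^4$ or $\mathbb{C}P^2$ by Theorems~\ref{thm:hsiang-kleiner} and~\ref{thm:grove_searle}, and its rational cohomology is controlled through Wilking's connectedness principle (Theorem~\ref{thm:wilking2}) applied to $N_1$, $N_2$, and $N_1\cap N_2$---this is the geometric input that rules out $b_5>0$.

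With $H^*(M;\mathbb{Q})$ concentrated in even degrees, Theorem~\ref{sec:gkm-manifolds-4} shows $\Gamma_M$ is connected and determines $H^*(M;\mathbb{Q})$ from the labeled graph. By Theorem~\ref{sec:gkm-manifolds-3} each component of $\Gamma_M$ is a CROSS graph in dimension $10$, so $\Gamma_M$, being connected on $6$ vertices, is the GKM graph of a linear $T^3$-action on $\mathbb{C}P^5$, giving $H^*(M;\mathbb{Q})\cong H^*(\mathbb{C}P^5;\mathbb{Q})$. To upgrade to $\mathbb{Z}$-coefficients, it suffices to show $\dim_{\mathbb{F}_p}H^*(M;\mathbb{F}_p)=6$ for every prime $p$; for each $p$ I would apply the localization Theorem~\ref{sec:equiv-cohom-local-2} with the multiplicatively closed set $S_p$ of Lemma~\ref{sec:equiv-cohom-local-3} to reduce the computation to the fixed set $M^{\mathbb{Z}_p^3}$, whose components are closed, simply connected, positively curved, totally geodesic submanifolds of dimension at most $8$ with torus symmetry of almost-maximal rank. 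By Theorems~\ref{thm:hsiang-kleiner},~\ref{thm:grove_searle},~\ref{thm:fang_rong1} and Lemma~\ref{lem:fang_rong2}, each such component is a CROSS with torsion-free integer cohomology, which pulls back through the localization to give $H^*(M;\mathbb{Z})$ torsion-free, hence isomorphic to $H^*(\mathbb{C}P^5;\mathbb{Z})$.

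The main obstacle is the equivariant-formality step in the second paragraph. GKM$_3$ alone does not force even-degree rational cohomology, and one has to extract it from the specific positive-curvature geometry by combining the almost-free spectral sequence on $M\setminus M_2$ with the fine cohomological information on $M_2$ furnished by Wilking's connectedness principle on the totally geodesic submanifolds $N_1$, $N_2$, and $N_1\cap N_2$.
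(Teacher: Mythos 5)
Your overall architecture (GKM$_3$ set-up, kill $b_5$, GKM graph for the rational ring, localization for integral torsion-freeness) matches the paper's, but the two hard steps are not actually carried out, and the mechanisms you propose for them would not close the gaps. For $b_5=0$: Wilking's connectedness principle gives that $N_1\hookrightarrow M$ is only $3$-connected (the $N_i$ are codimension-$4$ and fixed by involutions, not circles, so there is no $\delta(G)$ bonus) and $N_1\cap N_2\hookrightarrow N_2$ is only $2$-connected; none of this says anything about $H^5(M;\mathbb{Q})$, and the inequality $\sum b_i\ge |M^{T^3}|$ reads $6+4k\ge 6$, which is vacuous. What the paper actually needs here is much more rigid information about $M_2$: it first proves that $M_2$ is \emph{equivariantly homeomorphic} to $\mathbb{C}P^5_2$ (Lemma~\ref{sec:case-where-dim}, resting on Theorem~\ref{sec:gkm-manifolds-2} and the homeomorphism-extension Lemmas~\ref{sec:gkm-manifolds} and~\ref{sec:gkm-manifolds-1}), deduces $H_*(M,M_2;\mathbb{Q})\cong H_*(\mathbb{C}P^5,\mathbb{C}P^5_2;\mathbb{Q})$ in low degrees, and then iterates the almost-free spectral sequence through $X/S^1$, $X/T^2$, $X/T^3$ to express $b_4(X/T^3)=k+10$ in terms of a \emph{universal constant} $c=\dim H_3(\mathbb{C}P^5,\mathbb{C}P^5_2;\mathbb{Q})$; running the same computation for $M=\mathbb{C}P^5$ forces $k=0$. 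This comparison-with-the-model trick is the missing idea in your second paragraph.

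The integral step has a second, independent gap: the inequality for elementary abelian $p$-groups goes the wrong way for you, namely $\dim_{\mathbb{F}_p}H^*(M^{\mathbb{Z}_p^3};\mathbb{F}_p)\le \dim_{\mathbb{F}_p}H^*(M;\mathbb{F}_p)$, so knowing that the fixed sets are CROSSes with total Betti number at most $6$ does not bound $\dim_{\mathbb{F}_p}H^*(M;\mathbb{F}_p)$ from above, and torsion in $H^*(M;\mathbb{Z})$ need not survive into $S_p^{-1}H^*_{T}(M;\mathbb{Z})$ at all. The paper's Lemma~\ref{sec:proof-1} is exactly the device needed: starting from a hypothetical $p$-torsion class in $H^5(M;\mathbb{Z})$, it uses the Serre spectral sequence of the Borel fibration, the ring structure $H^*(M;\mathbb{Z})/\mathrm{torsion}\cong H^*(\mathbb{C}P^5;\mathbb{Z})$, and integration over the fiber to manufacture a $p$-torsion element of $H^*_{S^1}(M;\mathbb{Z})$ that is \emph{not} $t$-torsion, and only then does localization to $M^{\mathbb{Z}_p}$ (whose equivariant cohomology is torsion-free by Lemma~\ref{sec:proof-6} and Lemma~\ref{sec:equiv-cohom-local}) yield a contradiction. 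Without an argument of this kind your final paragraph does not establish torsion-freeness.
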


For the convenience of the reader, we recall again that being in the third case of Theorem~\ref{thm:amann_kennard} refers to the fact that \(M^{10}\) is a closed, simply connected, positively curved manifold with an effective, isometric \(T^3\)-action such that all of the following statements hold: 
   \begin{itemize}
 \item \(H_i(M;\mathbb{Z})=H_i(\mathbb{C}P^5;\mathbb{Z})\) for \(i\leq 3\)
 \item \(H^4(M;\mathbb{Q})\neq 0\)
 \item \(\chi(M)=\chi(\mathbb{C}P^5)\)
 \item There are two involutions in \(T^3\) fixing two different $6$-dimensional submanifolds \(N_1,N_2\subset M\).
 \end{itemize}

 We give the proof of the above theorem in a sequence of lemmas.
 
\begin{lemma}
  \label{sec:case-where-dim}
  Assume that we are in the third case of Theorem~\ref{thm:amann_kennard}.
  Assume that for all \(S^1\subset T^{3}\) we  have \(\dim M^{S^1}\leq 4\).
  Then the \(T^3\)-action on \(M\) is GKM$_3$ and the two-skeleton \(M_2\) is equivariantly homeomorphic to the two-skeleton \(\mathbb{C}P^5_2\) of a linear action of \(T^3\) on \(\mathbb{C} P^5\).
\end{lemma}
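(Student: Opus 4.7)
The plan is to split the proof into three stages: verify that the action is GKM$_3$, identify the GKM graph $\Gamma_M$ with the one coming from a linear $T^3$-action on $\mathbb{C}P^5$, and then invoke Theorem~\ref{sec:gkm-manifolds-2}.

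First I would establish GKM$_3$. Positive curvature together with Berger's theorem automatically gives condition (1) of Definition~\ref{sec:gkm-manifolds-6} (as noted in the remark after it), and Lemma~\ref{sec:gkm-manifolds-5} with $k=3$ shows that the hypothesis $\dim M^{S^1}\leq 4$ for every circle $S^1\subset T^3$ is precisely equivalent to condition (2). Hence the action is GKM$_3$; in particular no weight of the isotropy representation at any $T^3$-fixed point vanishes, so $M^{T^3}$ is discrete and, by compactness, finite. Using $\chi(M)=\chi(M^{T^3})$ for torus actions and $\chi(M)=\chi(\mathbb{C}P^5)=6$ from case (3) of Theorem~\ref{thm:amann_kennard}, this gives $|M^{T^3}|=6$.

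Next I would pin down $\Gamma_M$. By Theorem~\ref{sec:gkm-manifolds-3} each connected component of $\Gamma_M$ is the GKM graph of a linear $T^3$-action on a $10$-dimensional CROSS, which in this dimension can only be $S^{10}$ (two vertices joined by five edges) or $\mathbb{C}P^5$ (six vertices with $K_6$-adjacency). Matching the vertex count of six, the only options are one $\mathbb{C}P^5$-component or three $S^{10}$-components. To rule out the latter I analyze $N_1$: Theorem~\ref{thm:wilking2}(1) applied with the finite stabilizing $\mathbb{Z}_2$ makes the inclusion $N_1\hookrightarrow M$ into a $(10-8+1+0)=3$-connected map, so $N_1$ is simply connected and $H_2(N_1;\mathbb{Z})\cong H_2(M;\mathbb{Z})=\mathbb{Z}$. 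The assumption $\dim M^{S^1}\leq 4$ forces the identity component of the ineffective kernel of $T^3$ on $N_1$ to be trivial (otherwise some circle would fix all of $N_1$, giving a $6$-dimensional circle-fixed set), so the action has symmetry rank $3=\dim(N_1)/2$; Theorem~\ref{thm:grove_searle}(2) then yields $N_1\cong S^6$ or $\mathbb{C}P^3$, and $H_2(N_1)\neq 0$ selects $N_1\cong\mathbb{C}P^3$. Its GKM subgraph $\Gamma_{N_1}=K_4$ is connected on four vertices and embeds into $\Gamma_M$ (vertices and invariant $2$-spheres of $N_1$ are vertices and invariant $2$-spheres of $M$), so $\Gamma_M$ cannot split into components of only two vertices each. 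Hence $\Gamma_M$ is isomorphic to the $K_6$-graph of a linear $T^3$-action on $\mathbb{C}P^5$.

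Finally, I would invoke Theorem~\ref{sec:gkm-manifolds-2}. Its hypothesis demands that every fixed-point component $F$ of a codimension-two subtorus (a circle) be simply connected; such an $F$ has even dimension at most $4$ and is closed, orientable (being the fixed set of an isometric circle action on orientable $M$), and positively curved, so Synge's theorem gives $\pi_1(F)=0$. Combined with the graph identification above, Theorem~\ref{sec:gkm-manifolds-2} produces the required equivariant homeomorphism $M_2\cong\mathbb{C}P^5_2$. The main obstacle is the middle paragraph---excluding the three-$S^{10}$-components alternative---where Wilking's connectedness lemma, just barely, transports the generator of $H_2(M)$ into $N_1$ and thereby forces $N_1\cong\mathbb{C}P^3$ rather than $S^6$, which in turn compels $\Gamma_M$ to be connected.
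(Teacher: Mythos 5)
Your overall skeleton is the paper's: GKM$_3$ via Lemma~\ref{sec:gkm-manifolds-5} and Berger, six fixed points from $\chi(M)=6$, Theorem~\ref{sec:gkm-manifolds-3} reducing to the $\mathbb{C}P^5$-graph versus three $S^{10}$-graphs, exclusion of the latter via $N_1\cong\mathbb{C}P^3$ (Grove--Searle plus the $3$-connectedness of $N_1\hookrightarrow M$), and finally Theorem~\ref{sec:gkm-manifolds-2}. Your verification of the simple-connectivity hypothesis of Theorem~\ref{sec:gkm-manifolds-2} via Synge is a detail the paper leaves implicit, and is correct.

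The one step that needs more than you give is the assertion $\Gamma_{N_1}=K_4$, specifically the \emph{connectivity} of $\Gamma_{N_1}$. This is exactly the load-bearing point: a priori Theorem~\ref{sec:gkm-manifolds-3} applied to the positively curved GKM$_3$-manifold $N_1$ only says each component of $\Gamma_{N_1}$ is the graph of a linear action on a $6$-dimensional CROSS, so with four vertices $\Gamma_{N_1}$ could still be two copies of the two-vertex $S^6$-graph --- which would be perfectly compatible with the $\amalg_{i=1}^3 S^{10}$ alternative you are trying to exclude. The gap is fillable: since $N_1\cong\mathbb{C}P^3$ has vanishing odd Betti numbers and the induced action is GKM$_2$, Theorem~\ref{sec:gkm-manifolds-4} gives connectivity of $\Gamma_{N_1}$. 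The paper avoids the issue differently: it passes to a codimension-two submanifold $N_1'\subset N_1$ fixed by a circle (using that $T_xN_1$ is the standard $\mathbb{C}^3$-representation), which is an integral cohomology $\mathbb{C}P^2$ by Bredon; its GKM graph has three vertices each of valence two, hence is forced to be a triangle by pure combinatorics, and a connected three-vertex subgraph of $\Gamma_M$ already rules out the disconnected option. Either repair works; as written, your proof is missing the argument that $\Gamma_{N_1}$ does not itself split.
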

\begin{proof}
  Since \(\dim M^{S^1}\leq 4\) for all circle subgroups \(S^1\subset T^3\), we have by Lemma~\ref{sec:gkm-manifolds-5} that the \(T^3\)-action on \(M\) is GKM$_3$.
Note that since the action is GKM$_3$ there are only finitely many \(T^3\)-fixed points in \(M\). Moreover, the number of torus fixed points in \(M\) is equal to the Euler characteristic of \(M\) which is six by assumption.
Hence, by Theorem \ref{sec:gkm-manifolds-3}, the GKM graph of the action on \(M\) is combinatorially equivalent to that of a linear torus action on \(\mathbb{C} P^5\) or to that of a linear torus action on \(\amalg_{i=1}^3S^{10}\).

Recall here that the underlying graph of the GKM graph of a linear torus action on \(\mathbb{C} P^5\) is given by a complete graph on six vertices.
Moreover, the underlying graph of the GKM graph of a linear torus action on \(S^{10}\) has two vertices and five edges connecting these vertices.

By assumption the induced \(T^3\)-action on \(N_1\) is almost effective.
Hence, by Theorem~\ref{thm:grove_searle}, \(N_1\) is diffeomorphic to \(\mathbb{C} P^3\).

Let \(H\subset T^3\) be the kernel of the \(T^3\)-action on \(N_1\) and \(x\in N_1^{T^3}\).
Up to automorphisms of the torus \(T^3/H\), the  \(T^3/H\)-representation \(T_xN_1\) is isomorphic to its standard representation on \(\mathbb{C}^3\).  Hence, one can find a submanifold \(N_1'\subset N_1\) of codimension two which is fixed by a circle subgroup of \(T^3\).
By \cite[Theorem 7.5.1]{MR0413144}, \(N_1'\) has the same integral cohomology as \(\mathbb{C} P^2\).

In particular, $\chi(N_1') = 3$  and \(N_1'\) contains three \(T^3\)-fixed points.
Since \(N_1'\) is $4$-dimensional, the GKM graph of \(N_1'\) is a triangle.
Since this is a subgraph of the GKM graph of \(M\), it follows that the GKM graph of \(M\) must be the same as the GKM graph of a linear torus action on \(\mathbb{C} P^5\).

By Theorem~\ref{sec:gkm-manifolds-2} there is an equivariant homeomorphism \(M_2\rightarrow \mathbb{C} P^5_2\).
\end{proof}

\begin{lemma}
  \label{sec:proof-5}
  Assume that we are in the third case of Theorem~\ref{thm:amann_kennard}.
  Assume that for all \(S^1\subset T^{3}\) we  have \(\dim M^{S^1}\leq 4\).
  Then \(H^*(M;\mathbb{Q})\cong H^*(\mathbb{C} P^5;\mathbb{Q})\).
\end{lemma}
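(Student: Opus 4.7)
By Lemma~\ref{sec:case-where-dim} the $T^3$-action on $M$ is of type GKM$_3$ with GKM graph isomorphic (as a labeled graph) to that of a linear $T^3$-action on $\mathbb{C}P^5$; in particular the graph is connected with six vertices. In view of Theorem~\ref{sec:gkm-manifolds-4}, once one shows that the rational odd Betti numbers of $M$ vanish, the GKM graph determines the rational cohomology ring of $M$ and forces it to agree with $H^*(\mathbb{C}P^5;\mathbb{Q})$. The plan is therefore to prove $H^{\mathrm{odd}}(M;\mathbb{Q})=0$.

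From the third case of Theorem~\ref{thm:amann_kennard} combined with Poincar\'e duality we already have $H^i(M;\mathbb{Q})=0$ for $i\in\{1,3,7,9\}$, and $\chi(M)=6$. In terms of the rational Betti numbers $b_i$ this yields $2b_4-b_5=2$, so only $b_5=0$ remains. Since $|M^{T^3}|=6=\chi(M)$ and, by the localization theorem (Theorem~\ref{sec:equiv-cohom-local-2}), the generic $H^*(BT^3;\mathbb{Q})$-rank of $H^*_{T^3}(M;\mathbb{Q})$ equals $|M^{T^3}|$, the statement $b_5=0$ is equivalent to equivariant formality of the $T^3$-action over $\mathbb{Q}$. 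I would establish equivariant formality via the following spectral sequence argument.

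First, I would control $H^*_{T^3}(M;\mathbb{Q})$ in high degrees. The $T^3$-action on $M\setminus M_2$ is almost free, so by a compactly supported version of Lemma~\ref{sec:equiv-cohom-local-4} combined with excision, the relative group $H^*_{T^3}(M,M_2;\mathbb{Q})$ identifies with $H^*_c((M\setminus M_2)/T^3;\mathbb{Q})$, which vanishes in degrees $\ge 8$ since the quotient has dimension seven. The long exact sequence of the pair $(M,M_2)$ then yields $H^i_{T^3}(M;\mathbb{Q})\cong H^i_{T^3}(M_2;\mathbb{Q})$ for $i\ge 8$. On the other hand Lemma~\ref{sec:case-where-dim} identifies $M_2$ equivariantly with $\mathbb{C}P^5_2$, a finite union of $T^3$-invariant projective subspaces $\mathbb{C}P^k$ ($k\le 2$) meeting along invariant $\mathbb{C}P^l$'s and $T^3$-fixed points. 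Each piece (and every multiple intersection) is equivariantly formal with even rational equivariant cohomology, so iterated Mayer--Vietoris shows that $H^*_{T^3}(M_2;\mathbb{Q})$ is concentrated in even degrees; hence so is $H^i_{T^3}(M;\mathbb{Q})$ for all $i\ge 8$.

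The final step is to propagate evenness to all degrees via the $H^*(BT^3;\mathbb{Q})$-module structure. Granted torsion-freeness of $H^*_{T^3}(M;\mathbb{Q})$ over $H^*(BT^3;\mathbb{Q})$, multiplication by a nonzero $t\in H^2(BT^3;\mathbb{Q})$ would carry any hypothetical odd-degree class into odd-degree classes in arbitrarily high total degree, contradicting the already-established even-concentration in degrees $\ge 8$. Thus $H^{\mathrm{odd}}_{T^3}(M;\mathbb{Q})=0$; combined with the rank-six count this forces the Poincar\'e series of $H^*_{T^3}(M;\mathbb{Q})$ to equal $P_M(t)/(1-t^2)^3$ with $P_M$ even, so the Borel spectral sequence degenerates at $E_2$ and the action is equivariantly formal. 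The main obstacle is precisely the torsion-freeness of $H^*_{T^3}(M;\mathbb{Q})$ over $H^*(BT^3;\mathbb{Q})$: absent a priori equivariant formality, torsion is possible in general, and to rule it out one must combine the injection $S_0^{-1}H^*_{T^3}(M;\mathbb{Q})\hookrightarrow S_0^{-1}H^*_{T^3}(M^{T^3};\mathbb{Q})$ from the localization theorem with the torsion-free control on $M_2$ obtained above.
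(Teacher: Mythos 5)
Your reduction of the lemma to $b_5(M)=0$ and your use of Lemma~\ref{sec:case-where-dim} together with Theorem~\ref{sec:gkm-manifolds-4} at the end agree with the paper, but the core of your argument has two genuine gaps. The first is the claim that iterated Mayer--Vietoris shows $H^*_{T^3}(M_2;\mathbb{Q})$ is concentrated in even degrees. In a Mayer--Vietoris sequence the odd part of $H^*_{T^3}(A\cup B)$ contains the cokernel of $H^{\mathrm{even}}_{T^3}(A)\oplus H^{\mathrm{even}}_{T^3}(B)\to H^{\mathrm{even}}_{T^3}(A\cap B)$, and this restriction map need not be surjective even when every piece is equivariantly formal with even cohomology. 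For instance, for the one-skeleton of a linear $T^2$-action on $\mathbb{C}P^2$ (a triangle of invariant $2$-spheres) one computes $H^1_{T^2}\neq 0$ precisely by this mechanism, already in degree zero of the Mayer--Vietoris sequence. So evenness of the equivariant cohomology of skeleta is a delicate claim that your sketch does not establish.

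The second gap is more serious: the final step does not close. Your multiplication-by-$t$ argument only shows that every odd-degree class of $H^*_{T^3}(M;\mathbb{Q})$ is $H^*(BT^3;\mathbb{Q})$-torsion, and ruling out exactly this torsion is the entire content of the lemma (equivariant formality is equivalent to \emph{freeness} of $H^*_{T^3}(M;\mathbb{Q})$, not to evenness; compare a free circle action on $S^3$, where $H^*_{S^1}$ is even but the action is not equivariantly formal). The localization injection $S_0^{-1}H^*_{T^3}(M)\hookrightarrow S_0^{-1}H^*_{T^3}(M^{T^3})$ annihilates precisely the torsion submodule, so it gives no information about it, and a torsion class in $H^5_{T^3}(M;\mathbb{Q})$ may restrict to zero on $M_2$, coming instead from $H^5_{T^3}(M,M_2;\mathbb{Q})$, which you have only shown to vanish in degrees $\geq 8$. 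Thus the ``combination'' you invoke reduces the problem to itself. The paper's proof avoids the equivariant cohomology of $M$ entirely: it passes to the compact complement $X$ of an invariant neighborhood of $M_2$, on which $T^3$ acts almost freely, computes the Betti numbers of $X$ via Poincar\'e--Lefschetz duality, the connectedness of $N_1\hookrightarrow M$ and the identification $M_2\cong \mathbb{C}P^5_2$ (so that they involve $k$ with $b_5(M)=2k$ and a constant $c$ depending only on $\mathbb{C}P^5_2$), and then runs the spectral sequences of Lemma~\ref{sec:equiv-cohom-local-1} for the three successive circle quotients down to the $7$-dimensional $X/T^3$; the finite cohomological dimension of the quotients forces $k+10=c$, and running the identical computation for $M=\mathbb{C}P^5$ evaluates $c$ and gives $k=0$. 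Some comparison of this kind with the model $\mathbb{C}P^5$ is the ingredient your argument is missing.
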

\begin{proof}
We first claim that \(b_5(M)=0\).

To prove this claim, first note that by the third case of Theorem~\ref{thm:amann_kennard} and Poincar\'e duality, we have
\begin{equation}
  \label{eq:1}
b_0(M)=b_2(M)=1,\quad b_1(M)=b_3(M)=0,\quad b_4(M)=k+1,\quad b_5(M)=2k
\end{equation}
 for some non-negative \(k\in \mathbb{Z}\).

 By the proof of Lemma~\ref{sec:case-where-dim}, we know \(N_1=\mathbb{C} P^3\).
 Moreover, \(N_1\hookrightarrow M\) is three-connected by the Connectedness Lemma (Theorem \ref{thm:wilking2}).
 Therefore, we have that the map from the upper left corner to the lower right corner of the following diagram \[\xymatrix{H_2((N_1)_2;\mathbb{Q})\ar[r]\ar[d]& H_2(M_2;\mathbb{Q})\ar[d]\\ H_2(N_1;\mathbb{Q})\ar[r]& H_2(M;\mathbb{Q})}\] is surjective.
 
Hence, by Lemma~\ref{sec:case-where-dim}, the maps \(H_*(M_2;\mathbb{Q})\rightarrow H_*(M;\mathbb{Q})\) and \(H_*(\mathbb{C} P^5_2;\mathbb{Q})\rightarrow H_*(\mathbb{C}P^5;\mathbb{Q})\), \(*\leq 3\), have isomorphic kernels and cokernels.
In particular,
\begin{equation}
  \label{eq:2}
  H_*(M,M_2;\mathbb{Q})\cong H_*(\C P^5,\mathbb{C} P^5_2;\mathbb{Q}) \text{ for } *\leq 3.
\end{equation}

In particular,
\begin{equation}
\label{eq:3}
  H_0(M,M_2;\mathbb{Q})=H_1(M,M_2;\mathbb{Q})=H_2(M,M_2;\mathbb{Q})=0.\end{equation}

Let \(X=M- U\), with \(U\) an open \(T^3\)-invariant neighborhood  of \(M_2\).
Then we can assume the following:
\begin{itemize}
\item \(X\) is a manifold with boundary.
\item \(U\) is homotopy equivalent to \(M_2\).
\item \(X\hookrightarrow M\) is five-connected, because \(\dim M_2=4\).
\item \(T^3\) acts almost freely on \(X\), because all points in \(M\) with positive dimensional isotropy group are contained in \(M_2\).
\end{itemize}

By Poincare duality and excision we get \[H^*(X;\mathbb{Q})\cong H_{10-*}(X,\partial U;\mathbb{Q})\cong H_{10-*}(M,M_2;\mathbb{Q}).\]
Combining this with the fact that \(X\hookrightarrow M\) is five-connected and using Equations (\ref{eq:1}), (\ref{eq:2}) and (\ref{eq:3}) we get
\[b_0(X)=b_2(X)=1,\; b_1(X)=b_3(X)=0,\;\] \[b_4(X)=k+1,\; b_7(X)=c,\; b_i(X)=0, \text{ for }i\geq 8.\] Here \(c=\dim H_3(\C P^5,\mathbb{C} P^5_2;\mathbb{Q})\) is a universal constant which is independent of \(M\).

Let \(T^3=S^1_1\times S_2^2\times S_3^1\). Then from the spectral sequence from Lemma \ref{sec:equiv-cohom-local-1} for \(X \rightarrow X/S^1_1\), we get
\[b_0(X/S^1_1)=1,\; b_2(X/S^1_1)=2,\; b_1(X/S^1_1)=b_3(X/S^1_1)=0,\;\] \[b_4(X/S^1_1)=k+3,\; b_6(X/S^1_1)=c,\; b_i(X/S^1_1)=0,\text{ for } i\geq 7.\]

To give more detail: The \(E_2\)-page of this spectral sequence has only two isomorphic consecutive non-zero lines. Hence all differentials \(d_r:E_r^{*,*}\rightarrow E_r^{*,*}\), \(r\geq 3\) are zero.
Hence we have \(b_i(X)=b_i(X/S^1_1)-\dim \image d_{2,i-2} + \dim \ker d_{2,i-1}\), where \(d_{2,i}:E^{1,i}_2\rightarrow E_2^{0,i+2}\) is the differential of the \(E_2\)-page.
Since \(X/S^1_1\) has finite cohomological dimension the results on the Betti numbers follow.

Next, from the spectral sequence from Lemma \ref{sec:equiv-cohom-local-1} for \(X/S^1_1 \rightarrow X/T^2\), \(T^2=S^1_1\times S^1_2\), we get
\[b_0(X/T^2)=1,\; b_2(X/T^2)=3,\; b_1(X/T^2)=b_3(X/T^2)=0,\;\] \[b_4(X/T^2)=k+6,\; b_5(X/T^2)=c,\; b_i(X/T^2)=0, \text{ for } i\geq 6.\]

Finally, from the same spectral sequence for \(X/T^2 \rightarrow X/T^3\), we get
\[b_0(X/T^3)=1,\; b_2(X/T^3)=4,\; b_1(X/T^3)=b_3(X/T^3)=0,\;\] \[b_4(X/T^3)=k+10=c,\; b_i(X/T^3)=0, \text{ for } i\geq 5.\]
In particular, \(k\) is independent of \(M\) and must therefore be zero because all our arguments also apply in the case \(M=\mathbb{C} P^5\).
Since \(b_5(M)=2k\), the claim follows.

Now, Theorem~\ref{sec:gkm-manifolds-4} together with Lemma~\ref{sec:case-where-dim} and Theorem \ref{sec:gkm-manifolds-2} implies \(H^*(M;\mathbb{Q})\cong H^*(\mathbb{C} P^5;\mathbb{Q})\).
\end{proof}

\begin{lemma}
  \label{sec:proof-7}
  Assume that we are in the third case of Theorem~\ref{thm:amann_kennard}.
  Assume that for all \(S^1\subset T^{3}\) we  have \(\dim M^{S^1}\leq 4\).
  Then \(H^*(M;\mathbb{Z})/\text{torsion}\cong H^*(\mathbb{C} P^5;\mathbb{Z})\) as rings.
\end{lemma}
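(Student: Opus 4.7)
The plan is to pick a generator $z$ of $H^2(M;\mathbb{Z})=\mathbb{Z}$ (available by the third case of Theorem~\ref{thm:amann_kennard}) and show that $z^j$ generates $H^{2j}(M;\mathbb{Z})/\text{torsion}$ for every $j=0,1,\dots,5$. By Lemma~\ref{sec:proof-5} the rational cohomology is that of $\mathbb{C}P^5$, so the free part of $H^{2j}(M;\mathbb{Z})$ is $\mathbb{Z}$ in each relevant degree and odd-degree cohomology is pure torsion. Fix a generator $y_{2j}$ of each free part and write $z^j\equiv c_j y_{2j}$ modulo torsion, with $c_j\in\mathbb{Z}$. Since $H^{10}(M;\mathbb{Z})$ is torsion-free, all mixed torsion terms drop out when we expand $z^5=z^j\cdot z^{5-j}$, and the perfect Poincar\'e duality pairing on free parts gives $y_{2j}y_{10-2j}=\pm y_{10}$. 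Thus $c_5=\pm c_j c_{5-j}$ for every $j$, and since $c_0=c_1=1$ the problem reduces to showing $c_2,c_3=\pm 1$; the values of $c_4,c_5$ then follow from these relations.

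The key tool is the submanifold $N_1$. The proof of Lemma~\ref{sec:case-where-dim} shows $N_1\cong\mathbb{C}P^3$: the $T^3$-action on $N_1$ is almost effective by the standing hypothesis $\dim M^{S^1}\leq 4$, Grove--Searle (Theorem~\ref{thm:grove_searle}) then forces $N_1\cong S^6$ or $\mathbb{C}P^3$, and the case $N_1\cong S^6$ is excluded by $H^2(N_1;\mathbb{Z})=H^2(M;\mathbb{Z})=\mathbb{Z}$, which follows from Wilking's connectedness theorem (Theorem~\ref{thm:wilking2}) applied to the codimension-$4$ totally geodesic inclusion $N_1\hookrightarrow M$. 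The same theorem gives that this inclusion is $3$-connected, so $z|_{N_1}$ generates $H^2(N_1;\mathbb{Z})$, and consequently $(z|_{N_1})^k=z^k|_{N_1}$ generates $H^{2k}(N_1;\mathbb{Z})=\mathbb{Z}$ for $k=0,1,2,3$.

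For each $k\leq 3$ the restriction $r_k\colon H^{2k}(M;\mathbb{Z})\to H^{2k}(N_1;\mathbb{Z})=\mathbb{Z}$ kills torsion (the target is torsion-free) and therefore factors through $H^{2k}(M;\mathbb{Z})/\text{torsion}=\mathbb{Z}$. Writing $d_k$ for the integer $r_k(y_{2k})$, I obtain
\[
c_k d_k \;=\; r_k(c_k y_{2k}) \;=\; r_k(z^k) \;=\; z^k|_{N_1},
\]
which is $\pm 1$ times the generator of $H^{2k}(N_1;\mathbb{Z})$. Hence $c_k d_k=\pm 1$ in $\mathbb{Z}$, forcing $c_k=\pm 1$ for $k=2,3$. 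The relations of the first paragraph then give $c_5=\pm c_2 c_3=\pm 1$ and $c_4=\pm c_5=\pm 1$. So every $z^j$ generates $H^{2j}(M;\mathbb{Z})/\text{torsion}$, and the rational relation $z^6=0$ (from Lemma~\ref{sec:proof-5}) carries over, yielding $H^*(M;\mathbb{Z})/\text{torsion}\cong\mathbb{Z}[z]/(z^6)\cong H^*(\mathbb{C}P^5;\mathbb{Z})$ as graded rings.

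The main conceptual obstacle is spotting the right auxiliary submanifold: $N_1\cong\mathbb{C}P^3$ has torsion-free cohomology concentrated in even degrees up to dimension $6$, which is exactly the range accessible by $3$-connectedness of $N_1\hookrightarrow M$, and by the Poincar\'e-duality-driven relations among the $c_j$ this range is also exactly what is needed to determine the ring structure modulo torsion in all degrees. Once this observation is in place, the rest is routine integer cohomology bookkeeping with cup products and the universal coefficient theorem.
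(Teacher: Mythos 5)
Your proof is correct and follows essentially the same route as the paper's (much terser) argument: the paper likewise combines Lemma~\ref{sec:proof-5} with the universal coefficient theorem to get the groups, and then uses the three-connected inclusion of $N_1\cong\mathbb{C}P^3$ together with Poincar\'e duality (as in Lemma~\ref{sec:proof-3}) to pin down the ring structure modulo torsion. Your write-up just makes explicit the restriction-to-$N_1$ step that forces $c_2,c_3=\pm 1$ and the unimodular-pairing step that then determines $c_4,c_5$.
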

\begin{proof}
By Lemma~\ref{sec:proof-5} and the universal coefficient theorem, we have \[H_*(M;\mathbb{Z})/\text{torsion}\cong H_*(\mathbb{C}P^5,\mathbb{Z})\] as groups.
Moreover \(N_1\hookrightarrow M\) is three-connected and \(N_1\) is diffeomorphic to \(\mathbb{C} P^3\) by Theorem~\ref{thm:grove_searle}.
Hence, it follows by an application of Poincare duality as in the proof of Lemma~\ref{sec:proof-3}, that \(H^*(M;\mathbb{Z})/\text{torsion}\cong H^*(\mathbb{C}P^5;\mathbb{Z})\) as rings.
\end{proof}

\begin{lemma}
  \label{sec:proof-6}
  Assume that we are in the third case of Theorem~\ref{thm:amann_kennard}.
  Assume that for all \(S^1\subset T^{3}\) we  have \(\dim M^{S^1}\leq 4\).
   Let \(G\subset T^3\) be a finite non-trivial subgroup. Then \(H^*(M^G;\mathbb{Z})\) is a free \(\mathbb{Z}\)-module concentrated in even degrees.
\end{lemma}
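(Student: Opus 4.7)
Let $G\subset T^3$ be a finite non-trivial subgroup, and let $F$ be a connected component of $M^G$; I will show that $F$ is drawn from a short list of model manifolds, each of which has torsion-free cohomology concentrated in even degrees, and then sum over components of $M^G$ to conclude.

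First I want to set up the geometry of $F$. Since $T^3$ is abelian and $G\subset T^3$, the torus $T^3$ preserves $F$, and the induced metric makes $F$ a totally geodesic, hence positively curved, closed submanifold. At any $T^3$-fixed point $x\in F$, the tangent space $T_xF=(T_xM)^G$ is the sum of those two-dimensional weight spaces of the $T^3$-representation on $T_xM$ whose weights restrict trivially to $G$; in particular $\dim F$ is even and the normal bundle $\nu(F\subset M)$ has complex fibers at $x$. Using the $T^3$-equivariant isotypical decomposition of real vector bundles over $T^3$-spaces with fixed points, the complex structure extends globally, making $\nu$ a complex bundle; since $TM|_F\cong TF\oplus\nu$ is oriented, it follows that $F$ is orientable. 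Then Synge's theorem yields that $F$ is simply connected provided $\dim F>0$, and Berger's theorem applied to a topological generator of $T^3$ produces a $T^3$-fixed point in $F$.

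Next I want to identify the effective torus acting on $F$. Pick $x\in F\cap M^{T^3}$ and set $2k=\dim F$; then exactly $k$ of the five weights of $T^3$ on $T_xM$ vanish on $G$. By Lemma~\ref{sec:case-where-dim} the action is GKM$_3$, so any three of these weights are linearly independent, and the $k$ vanishing weights span a rank-$\min(k,3)$ sublattice of the character lattice. Consequently the kernel of the $T^3$-action on $F$ has identity component of dimension $3-\min(k,3)$, and the quotient torus of dimension $\min(k,3)$ acts effectively on $F$.

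Finally the proof follows by a case analysis on $k\in\{0,1,2,3,4,5\}$: for $k=0$ the component is a point; for $k=1$ the surface $F^2$ is diffeomorphic to $S^2$; for $k=2$ and $k=3$ the manifold $F^{2k}$ carries an effective isometric $T^k$-action of maximal symmetry rank and so by Theorem~\ref{thm:grove_searle} is diffeomorphic to $S^{2k}$ or $\mathbb{C}P^k$; for $k=4$ the eight-manifold $F^8$ admits an effective isometric $T^3$-action and so by Theorem~\ref{thm:fang_rong1} is homeomorphic to $S^8$, $\mathbb{C}P^4$, or $\mathbb{H}P^2$; and $k=5$ is excluded because it would force $G$ to fix a neighborhood of $x$, hence all of $M$ by connectedness, contradicting effectiveness of the $T^3$-action. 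Each manifold on this list has torsion-free cohomology supported in even degrees, and since $H^*(M^G;\mathbb{Z})$ is the direct sum over components, the claim follows. The principal technical obstacle is establishing orientability of $F$ (needed to invoke Synge and hence simple connectedness), after which the proof is a direct application of the classifications already assembled in Section~\ref{sec:prelim}.
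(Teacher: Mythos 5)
Your case analysis is, in substance, the paper's own proof: reduce to a connected component \(F\) of \(M^G\), identify its dimension and the torus acting effectively on it, and quote Gauss--Bonnet, Grove--Searle (Theorem~\ref{thm:grove_searle}) and Fang--Rong (Theorem~\ref{thm:fang_rong1}). The paper organizes the cases by whether \(F\) is pointwise fixed by a circle (then \(\dim F\le 4\) by hypothesis, and Theorem~\ref{thm:hsiang-kleiner} or Gauss--Bonnet applies) rather than by \(\dim F\), and it does not need the GKM\(_3\) weight count to produce the acting torus, but these differences are immaterial.

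The one place where your write-up does not hold together as stated is the preliminary step you yourself flag as the principal obstacle. You derive evenness of \(\dim F\), the complex structure on the normal bundle, and orientability from the weight decomposition of \(T_xM\) at a point \(x\in F\cap M^{T^3}\), and you then produce such a point by applying Berger's theorem to \(F\) --- but Berger's vanishing theorem requires \(F\) to be even-dimensional, which is exactly what you are in the middle of establishing, so the argument is circular as ordered. In addition, for a finite group the isotypical decomposition of the normal bundle contains, in general, real one-dimensional (sign) summands, so the assertion that ``the complex structure extends globally'' is not automatic the way it is for fixed sets of subtori; the fact that each sign character occurs with even multiplicity is again something you only see at a \(T^3\)-fixed point, where the \(G\)-representation is the restriction of a \(T^3\)-representation. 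For cyclic \(G=\langle g\rangle\) one can break the circle at any point \(y\in M^g\): since \(g\) lies in the connected group \(T^3\) and restricts to the identity on \(T_yF\), one has \(\det(dg_y|_{\nu_yF})=1\) and no \(+1\)-eigenvalue, so the codimension is even with no fixed point needed; for non-cyclic \(G\) (e.g.\ \(\mathbb{Z}_p\times\mathbb{Z}_p\)) a little more care is required, since orientation-preservation of every element alone does not force even codimension. These facts (even dimension, orientability, hence simple connectedness via Synge --- which in dimensions six and eight also follows from Theorem~\ref{thm:wilking2} --- and \(F\cap M^{T^3}\neq\emptyset\)) are standard in this literature and are taken for granted in the paper's three-line proof; once they are granted, your argument and the paper's coincide.
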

\begin{proof}
It suffices to consider the case where \(M^G\) is connected. If \(M^G\) is fixed by a circle group, then by assumption it has dimension at most four and the claim holds by Theorem~\ref{thm:hsiang-kleiner} or Gauss-Bonnet.

If \(M^G\) is not fixed by a circle group, then, by the upper bound on the symmetry rank of a positively curved manifold from \cite{MR1255926}, it has dimension six or eight and admits an effective isometric \(T^3\)-action. Hence the claim follows from Theorems \ref{thm:grove_searle} and \ref{thm:fang_rong1}, respectively.
\end{proof}

\begin{lemma}
  \label{sec:proof-1}
  Let \(M\) be a \(10\)-dimensional manifold with \(T^2\)-action such that:
  \begin{enumerate}
  \item \(H^*(M;\mathbb{Z})/\text{torsion}\cong H^*(\mathbb{C} P^5;\mathbb{Z})\) as rings
  \item the torsion in \(H^*(M;\mathbb{Z})\) is contained in degrees five and six,
  \item for all \(p\in\mathbb{Z}\) prime and \(\mathbb{Z}_p\subset S^1\subset T^2\), \(H^*_{S^1}(M^{\mathbb{Z}_p};\mathbb{Z})\)
    has no \(H^*(BS^1;\mathbb{Z})\)-torsion,
      \item for all \(p\in\mathbb{Z}\) prime and \(\mathbb{Z}_p\times \mathbb{Z}_p\subset T^2\), \(H^*_{T^2}(M^{\mathbb{Z}_p\times \mathbb{Z}_p};\mathbb{Z})\)
 has no \(H^*(BT^2;\mathbb{Z})\)-torsion,
\end{enumerate}
Then \(H^*(M;\mathbb{Z})\) is torsion-free.
In particular,  \(H^*(M;\mathbb{Z})\cong H^*(\mathbb{C} P^5;\mathbb{Z})\) as rings.
\end{lemma}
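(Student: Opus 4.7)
The plan is to show that $H^*(M;\mathbb{Z})$ is torsion-free; hypothesis~(1) then upgrades this to $H^*(M;\mathbb{Z})\cong H^*(\mathbb{C}P^5;\mathbb{Z})$ as rings. I work one prime at a time. Fix a prime $p$; by hypothesis~(2), Poincar\'e duality and the universal coefficient theorem, any $p$-primary torsion lies in degrees $5$ and $6$ with isomorphic torsion subgroups, so it is enough to exclude $p$-torsion from $H^5(M;\mathbb{Z})$.

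The key input is Theorem~\ref{sec:equiv-cohom-local-2} applied to the $T^2$-action with the multiplicative set $S_p\subset H^*(BT^2;\mathbb{Z})$ of Lemma~\ref{sec:equiv-cohom-local-3}(2). Since $M_{S_p}=M^{\mathbb{Z}_p\times \mathbb{Z}_p}$ this gives
\[
S_p^{-1}H^*_{T^2}(M;\mathbb{Z})\;\cong\; S_p^{-1}H^*_{T^2}(M^{\mathbb{Z}_p\times \mathbb{Z}_p};\mathbb{Z}),
\]
and hypothesis~(4) makes the right-hand side torsion-free (in particular $\mathbb{Z}$-torsion-free, since $\mathbb{Z}\hookrightarrow H^*(BT^2;\mathbb{Z})$ as the constants). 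The analogous statement for each circle $S^1\subset T^2$ containing $\mathbb{Z}_p$ follows from hypothesis~(3). I then assemble these localizations through an integral Chang--Skjelbred / Atiyah--Bredon exact sequence
\[
0\to H^*_{T^2}(M;\mathbb{Z})\to H^*_{T^2}(M^{T^2};\mathbb{Z})\to H^{*+1}_{T^2}(M_1,M^{T^2};\mathbb{Z})\to H^{*+2}_{T^2}(M,M_1;\mathbb{Z})\to 0,
\]
where $M_1$ denotes the equivariant one-skeleton. Thom isomorphisms identify the relative terms with equivariant cohomologies of fixed sets of circles and of $\mathbb{Z}_p\times\mathbb{Z}_p$; hypotheses (3) and (4) render these torsion-free, and an induction along the filtration, together with the rank count $\dim_{\mathbb{Q}} H^*(M;\mathbb{Q})=6$ from Lemma~\ref{sec:proof-5}, shows $H^*_{T^2}(M;\mathbb{Z})$ is free over $H^*(BT^2;\mathbb{Z})$ of the expected rank.

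With $H^*_{T^2}(M;\mathbb{Z})$ known to be free over $H^*(BT^2;\mathbb{Z})$ of the expected rank, the integral Serre spectral sequence of $M\to M_{T^2}\to BT^2$ collapses at $E_2$, and
\[
H^*(M;\mathbb{Z})\cong H^*_{T^2}(M;\mathbb{Z})\otimes_{H^*(BT^2;\mathbb{Z})}\mathbb{Z}
\]
is a free abelian group, yielding the required torsion-freeness. The main obstacle is the integral Chang--Skjelbred step: standard formulations of Chang--Skjelbred and Atiyah--Bredon are over $\mathbb{Q}$ or $\mathbb{F}_p$ under equivariant formality, so establishing the analogous integral statement requires careful use of hypotheses (3) and (4) at the codimension-one and codimension-two strata, together with the subtle algebraic point of upgrading torsion-freeness to freeness over the non-PID ring $\mathbb{Z}[t_1,t_2]=H^*(BT^2;\mathbb{Z})$, for which matching ranks against the rational case provided by Lemma~\ref{sec:proof-5} is essential.
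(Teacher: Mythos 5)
Your proposal has a genuine gap at its central step. The ``integral Chang--Skjelbred / Atiyah--Bredon exact sequence'' you invoke is not an unconditional tool: exactness of that sequence at the first two positions is (essentially) \emph{equivalent} to $H^*_{T^2}(M;\mathbb{Z})$ being a free, respectively torsion-free/reflexive, module over $H^*(BT^2;\mathbb{Z})$ --- i.e.\ to the equivariant formality you are trying to establish. In particular, injectivity of $H^*_{T^2}(M;\mathbb{Z})\to H^*_{T^2}(M^{T^2};\mathbb{Z})$ already presupposes that $H^*_{T^2}(M;\mathbb{Z})$ has no $H^*(BT^2;\mathbb{Z})$-torsion, which fails precisely when $H^*(M;\mathbb{Z})$ has the torsion in degrees $5$ and $6$ that you must rule out. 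You flag this as ``the main obstacle'' but do not resolve it, so the argument is circular as written. (A secondary issue: inside this lemma you cannot appeal to Lemma~\ref{sec:proof-5}, since the lemma is a standalone statement with no curvature hypothesis; the rational Betti numbers come from hypotheses (1) and (2).)

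The paper avoids this circularity by arguing by contradiction at the level of a single, carefully chosen circle. Assuming $p$-torsion in $H^5(M;\mathbb{Z})$, one studies the Serre spectral sequence of $M\to M_{T^2}\to BT^2$, whose only possibly nonzero differential is $d_2\colon E_2^{6,*}\to E_2^{5,*+2}$. Localization at $S_p$ together with hypothesis (4) forces $s\,a=d_2(b)$ for some $s\in S_p$ and a $p$-torsion preimage $b$ sharing no common divisor with $s$; choosing a primitive $t_1\mid s$ and setting $S^1=\ker t_1$, the image of $b$ in the $S^1$-spectral sequence is a permanent cocycle, and an integration-over-the-fiber computation (using hypothesis (1)) corrects its lift to produce a $p$-torsion element of $H^*_{S^1}(M;\mathbb{Z})$ that is \emph{not} $t$-torsion. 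This contradicts the $S^1$-localization theorem combined with hypothesis (3). The essential ideas you are missing are the choice of the circle $S^1$ depending on the torsion class via the differential, and the lifting/correction argument through the filtration of $H^*_{S^1}(M;\mathbb{Z})$; without them, hypotheses (3) and (4) cannot be brought to bear on the ordinary cohomology of $M$.
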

\begin{proof}
  Let \(p\in \mathbb{Z}\) be a prime.
  Assume that there is \(p\)-torsion in \(H^*(M;\mathbb{Z})\).
  Then by Poincare duality and the universal coefficient theorem there is \(p\)-torsion in both degree \(5\) and \(6\).
  
  We show the following:

  \textbf{Claim:} There is \(S^1\subset T^2\) such that \(H^*_{S^1}(M;\mathbb{Z})\) contains a \(p\)-torsion element which is not \(t\)-torsion, \(t\in H^2(BS^1;\mathbb{Z})\) a generator.

  When this is shown, a contradiction arises from localization (Theorem~\ref{sec:equiv-cohom-local-2} and Lemma~\ref{sec:equiv-cohom-local-3})  since then
  \[S^{-1}_pH^*_{S^1}(M)\cong S^{-1}_pH^*_{S^1}(M^{\mathbb{Z}_p})\]
  contains \(p\)-torsion, where \(S_p=\{at^k;\; a\in \mathbb{Z}, p\not| a, k\geq 0\}\).
  But by assumption it does not contain torsion.

  So we prove the claim:
  Consider the Serre spectral sequence  \((E_*,d_*)\) for \(M\rightarrow M_T\rightarrow BT\).

  The only differential which might be non-zero in this spectral sequence is \(d_2:E^{6,*}_2\rightarrow E^{5,*+2}_2\).
  Note that \(H^{\text{odd}}_{T^2}(M;\mathbb{Z})\cong E_\infty^{5,*}\).
  Hence, by localization (Theorem~\ref{sec:equiv-cohom-local-2} and Lemma~\ref{sec:equiv-cohom-local-3}) and assumption (4) for each \(p\)-torsion element \(a\in H^5(M)=E_2^{5,0}\), there is a \(s\in S_p=\{\prod_i s_i;\;s_i\in H^2(BT), p\not| s_i\}\) such that \(sa\) is in the image of \(d_2\).
  Let \(b\) be a preimage. Then we may assume that \(b\) is \(p\)-torsion.
  Moreover, we may assume that \(s\) and \(b\) do not have common divisors in \(H^*(BT;\mathbb{Z})\).
  Let \(t_1\in H^2(BT;\mathbb{Z})\) be a primitive element dividing \(s\) and \(S^1=\ker t_1\subset T^2\).

  Consider the Serre spectral sequence \((F_*,d'_*)\) for \(M\rightarrow M_{S^1}\rightarrow BS^1\).
  Then we have a map \(f:E_*\rightarrow F_*\).
  Moreover we have \(f(b)\neq 0\) and \(d_2'(f(b))=0\).
  Hence, \(c=f(b)\) survives the spectral sequence and we have \(p\)-torsion elements in \(F_\infty^{6,*}\) which are not \(t\)-torsion.

  We have a filtration
  \[H^*_{S^1}(M;\mathbb{Z})=K_{10}\supset K_9\supset K_8 \supset \dots K_0=H^*(BS^1;\mathbb{Z}),\]
  by graded \(H^*(BS^1;\mathbb{Z})\)-modules
  such that there are exact sequences
  \[0\leftarrow F_\infty^{i,*}\leftarrow K_i \leftarrow K_{i-1}\leftarrow 0\]
  for \(i=1,\dots, 10\).

  For \(i\leq 4\), \(F_\infty^{i,*}\) is concentrated in even degrees.
  Hence, \(K_4\) is concentrated in even degrees.
  Moreover, \(K_4\) is generated as a module over \(H^*(BS^1;\mathbb{Z})\) by \(1,x,x^2\) where \(x\in H^2_{S^1}(M;\mathbb{Z})\) is a lift of a generator of \(H^2(M;\mathbb{Z})\).

  Note also that \(F_\infty^{5,*}\) is concentrated in odd degrees.
  Hence, it follows that the even-degree part of \(K_5\) is equal to \(K_4\).
  
  We claim that \(K_6\) contains a \(p\)-torsion element which is not a \(t\)-torsion element.

  Let \(\tilde{c}\in K_6\) be a lift of the \(p\)-torsion element \(c\in F_\infty^{6,*}\) and let \(\gamma\) be half of its degree.
  Then \(\tilde{c}\) is not \(t\)-torsion since \(c\) is not \(t\)-torsion.

  We can assume that \(pc=0\). Then there are \(a_0,a_1,a_2\in \mathbb{Z}\) such that
  \[p\tilde{c}=\sum_{i=0}^2 a_i t^{\gamma-i}x^i.\]
  By integration over the fiber (see Section~\ref{sec:prelim_equi_cohom}) we get the following equation in \(H^*(BS^1;\mathbb{Z})\):
  \[p\tilde{c}x^3[M]=\sum_{i=0}^2 a_i t^{\gamma-i}x^{i+3}[M]=a_2t^{\gamma-2}x^5[M]=a_2t^{\gamma-2}.\]
  Here, for the last equality we have used that \(H^*(M;\mathbb{Z})/\text{torsion}\cong H^*(\mathbb{C} P^5;\mathbb{Z})\).
  So \(p\) divides \(a_2\).
  Similarly we have
  \[(p\tilde{c}x^4-a_2t^{\gamma-2}x^6)[M]=\sum_{i=0}^1 a_i t^{\gamma-i}x^{i+4}[M]=a_1t^{\gamma-1}x^5[M]=a_1t^{\gamma-1}.\]
  Hence, \(p\) also divides \(a_1\). Similarly, one sees that \(p\) also divides \(a_0\).

  Now let \(\tilde{c}'=\tilde{c}-\sum_{i=0}^2\frac{a_i}{p} t^{\gamma-i} x^i\).
  Then \(\tilde{c}'\) is a lift of \(c\) which is \(p\)-torsion but not \(t\)-torsion.
  Hence our claim follows.
\end{proof}

Now we are ready to prove Theorem~\ref{sec:proof-2}.

\begin{proof}[Proof of Theorem \ref{sec:proof-2}]
    By Lemma \ref{sec:proof-7}, the Assumption (1) of Lemma~\ref{sec:proof-1} is satisfied.
    
    Assumption (2) of Lemma~\ref{sec:proof-1} follows from Poincare duality and the universal coefficient theorem because \(H_i(M;\mathbb{Z})\) is torsion-free for \(0\leq i\leq 3\).

    Finally Assumptions (3) and (4) of Lemma~\ref{sec:proof-1} are satisfied by  Lemmas \ref{sec:proof-6} and \ref{sec:equiv-cohom-local}. Therefore the claim follows from Lemma~\ref{sec:proof-1}.
\end{proof}

\section{The proof of the Main Theorem}
\label{sec:complete}

In this final section we prove our main result.

\begin{proof}[Proof of the Main Theorem]
  Assume that we are in the situation of the Main Theorem, i.e. \(M^{10}\) is a closed, simply connected, positively curved manifold with an isometric effective action of a torus \(T^3\).

  If we are not in the third case of Theorem~\ref{thm:amann_kennard}, then \(M\) has the integral cohomology of \(S^{10}\) or \(\mathbb{C} P^5\).

  So we can assume that we are in the third case of Theorem~\ref{thm:amann_kennard} and not in one of the other cases.
  So, in particular, the codimension of the fixed point sets of every \(S^1\subset T^3\) is at least four.
  
  If there is a \(S^1\subset T^3\) with a $6$-dimensional fixed point component, then it follows from Theorem~\ref{sec:case-were-dim} that \(M\) has the integral cohomology of \(\mathbb{C} P^5\).

  If there is no such \(S^1\subset T^3\), then it follows from Theorem~\ref{sec:proof-2} that the same holds in this case.
  
  It is known that if \(\pi_1(M)=0\) and $M$ has the integral cohomology ring of $S^{10}$ or $\C P^5$, then \(M\) is homotopy equivalent to one of these two spaces.
  For the sake of completeness we give an argument for this fact here.

  First assume that \(M\) has the integral cohomology of \(S^{10}\).
  Since there is a CW-complex-structure on \(M\) with exactly one \(10\)-cell, there is a bijection
  \[[M;S^{10}]\rightarrow H^{10}(M;\mathbb{Z})\quad\quad [f]\mapsto f^*u,\]
  where \(u\) is a generator of \(H^{10}(S^{10};\mathbb{Z})\) and \([M;S^{10}]\) denotes the set of homotopy classes of maps \(M\rightarrow S^{10}\).
  Hence, in this case the claim follows from Whitehead's theorem applied to a map \(f:M\rightarrow S^{10}\) corresponding to a generator of \(H^{10}(M;\mathbb{Z})\).

  Next assume that \(M\) has the integral cohomology of \(\mathbb{C} P^5\).
  Note that \(\mathbb{C}P^\infty\) is an Eilenberg--MacLane space \(K(\mathbb{Z},2)\).
  Moreover, the $11$-skeleton of the CW-complex \(\mathbb{C} P^{\infty}\) is given by
  \(\mathbb{C} P^5\subset \mathbb{C} P^\infty\).
  Since \(M\) is $10$-dimensional, we therefore have \([M;\mathbb{C} P^5]\cong [M;\mathbb{C} P^\infty]\).

  Hence, it follows that there is a bijection
  \[[M;\mathbb{C}P^{5}]\rightarrow H^{2}(M;\mathbb{Z})\quad\quad [f]\mapsto f^*u,\]
  where \(u\) is a generator of \(H^2(\mathbb{C} P^5;\mathbb{Z})\).
    Therefore, in this case the claim follows from Whitehead's theorem applied to a map \(f:M\rightarrow \mathbb{C} P^5\) corresponding to a generator of \(H^{2}(M;\mathbb{Z})\).
\end{proof}


\bibliography{refs}{}

\begin{thebibliography}{KWW22}

\bibitem[AK20]{MR4130255}
Manuel Amann and Lee Kennard.
\newblock Positive curvature and symmetry in small dimensions.
\newblock {\em Commun. Contemp. Math.}, 22(6):1950053, 57, 2020.

\bibitem[Ber61]{MR133083}
M.~Berger.
\newblock Les vari\'et\'es riemanniennes homog\`enes normales simplement
  connexes \`a{} courbure strictement positive.
\newblock {\em Ann. Scuola Norm. Sup. Pisa Cl. Sci. (3)}, 15:179--246, 1961.

\bibitem[BH58]{zbMATH03159124}
A.~Borel and F.~Hirzebruch.
\newblock Characteristic classes and homogeneous spaces. {I}.
\newblock {\em Amer. J. Math.}, 80:458--538, 1958.

\bibitem[Bre72]{MR0413144}
Glen~E. Bredon.
\newblock {\em Introduction to compact transformation groups}.
\newblock Pure and Applied Mathematics, Vol. 46. Academic Press, New
  York-London, 1972.

\bibitem[DW04]{zbMATH02133491}
Anand Dessai and Burkhard Wilking.
\newblock Torus actions on homotopy complex projective spaces.
\newblock {\em Math. Z.}, 247(3):505--511, 2004.

\bibitem[FR05]{MR2139252}
Fuquan Fang and Xiaochun Rong.
\newblock Homeomorphism classification of positively curved manifolds with
  almost maximal symmetry rank.
\newblock {\em Math. Ann.}, 332(1):81--101, 2005.

\bibitem[GKM98]{MR1489894}
Mark Goresky, Robert Kottwitz, and Robert MacPherson.
\newblock Equivariant cohomology, {K}oszul duality, and the localization
  theorem.
\newblock {\em Invent. Math.}, 131(1):25--83, 1998.

\bibitem[GS94]{MR1255926}
Karsten Grove and Catherine Searle.
\newblock Positively curved manifolds with maximal symmetry-rank.
\newblock {\em J. Pure Appl. Algebra}, 91(1-3):137--142, 1994.

\bibitem[GW15]{MR3456711}
Oliver Goertsches and Michael Wiemeler.
\newblock Positively curved {GKM}-manifolds.
\newblock {\em Int. Math. Res. Not. IMRN}, 2015(22):12015--12041, 2015.

\bibitem[HK89]{MR992332}
Wu-Yi Hsiang and Bruce Kleiner.
\newblock On the topology of positively curved {$4$}-manifolds with symmetry.
\newblock {\em J. Differential Geom.}, 29(3):615--621, 1989.

\bibitem[HY76]{0346.57014}
A.~Hattori and T.~Yoshida.
\newblock {Lifting compact group actions in fiber bundles.}
\newblock {\em Jap. J. Math., new. Ser.}, 2:13--25, 1976.

\bibitem[Kaw91]{zbMATH00051915}
Katsuo Kawakubo.
\newblock {\em The theory of transformation groups}.
\newblock Oxford etc.: Oxford University Press, 1991.

\bibitem[Ken13]{zbMATH06152271}
Lee Kennard.
\newblock On the {Hopf} conjecture with symmetry.
\newblock {\em Geom. Topol.}, 17(1):563--593, 2013.

\bibitem[KWW21]{KWW}
Lee Kennard, Michael Wiemeler, and Burkhard Wilking.
\newblock Splitting of torus representations and applications in the {Grove}
  symmetry program.
\newblock Preprint, arXiv:2106.14723, 2021.

\bibitem[KWW22]{KWW2}
Lee Kennard, Michael Wiemeler, and Burkhard Wilking.
\newblock Positive curvature, torus symmetry and matroids.
\newblock Preprint, arXiv:2212.08152, 2022.

\bibitem[May89]{zbMATH04100211}
Karl~Heinz Mayer.
\newblock G-invariante {Morse}-{Funktionen}. ({G}-invariant {Morse} functions).
\newblock {\em Manuscr. Math.}, 63(1):99--114, 1989.

\bibitem[McC01]{MR1793722}
John McCleary.
\newblock {\em A user's guide to spectral sequences}, volume~58 of {\em
  Cambridge Studies in Advanced Mathematics}.
\newblock Cambridge University Press, Cambridge, second edition, 2001.

\bibitem[Nie22]{Nienhaus}
Jan Nienhaus.
\newblock An improved four-periodicity theorem and a conjecture of {H}opf with
  symmetry.
\newblock Preprint, arXiv:2211.13151, 2022.

\bibitem[Nov10]{novikov10:_homot}
S.~P. Novikov.
\newblock Homotopically equivalent smooth manifolds.
\newblock In S.~P. Novikov and I.~A. Taimanov, editors, {\em Topological
  library. {P}art 2: {C}haracteristic classes and smooth structures on
  manifolds}, volume~44 of {\em Series on Knots and Everything}, pages 49--183.
  World Scientific Publishing Co. Pte. Ltd., Hackensack, NJ, 2010.
\newblock Translated from the Russian original by V. O. Manturov.

\bibitem[OR70]{zbMATH03343393}
P.~Orlik and F.~Raymond.
\newblock Actions of the torus on 4-manifolds. {I}.
\newblock {\em Trans. Am. Math. Soc.}, 152:531--559, 1970.

\bibitem[tD87]{zbMATH03988265}
Tammo tom Dieck.
\newblock {\em Transformation groups}, volume~8 of {\em De Gruyter Stud. Math.}
\newblock De Gruyter, Berlin, 1987.

\bibitem[Wil03]{MR2051400}
Burkhard Wilking.
\newblock Torus actions on manifolds of positive sectional curvature.
\newblock {\em Acta Math.}, 191(2):259--297, 2003.

\end{thebibliography}
\bibliographystyle{alpha}
\end{document}